\newtheorem{theorem}{Theorem}
\newtheorem{proposition}{Proposition}
\newtheorem{lemma}[theorem]{Lemma}
\newtheorem{corollary}[theorem]{Corollary}
\newtheorem{remark}{Remark}
\def\BBN {{\mathbb N}}
\def\BBR {{\mathbb R}}
\def\BBC {{\mathbb C}}
\def\BBD {{\mathbb D}}
\def\BBT {{\mathbb T}}
\def\BBH{{\mathbb H}}
\def\mod{{mod}}
\title{\bf{Random Conformal Weldings at criticality}}
\author{\vspace{5mm}
Nicolae Tecu}
\begin{document}

\date{May 2012}

\maketitle

\begin{abstract}
\addcontentsline{toc}{section}{Abstract}
\setcounter{page}{3}
We construct a family of random Jordan curves in the plane by welding together two disks on their boundaries using a random homeomorphism. This homeomorphism arises from a random measure whose density, in a generalized sense, is the exponentiated Gaussian Free Field at criticality. We also introduce a representation of the Gaussian Free Field in terms of vaguelets, which may be of separate interest. 

The result extends a theorem of Astala, Jones, Kupiainen and Saksman (\cite{AJKS09}) to criticality. 

\end{abstract}

\setcounter{page}{1}
\pagenumbering{arabic}

\section{Introduction}

We extend a theorem of Astala, Jones, Kupiainen and Saksman (\cite{AJKS09}) to criticality. The authors of \cite{AJKS09} constructed a family of random Jordan curves in the plane by solving the conformal welding problem with a random homeomorphism. The random homeomorphism arises from a random measure which is constructed by a limiting process via the exponentiated Gaussian Free Field (its restriction to the unit circle). The algorithm depends on one parameter, inverse temperature, and provides a random Jordan curve for each inverse temperature less than a certain critical value. 

In this work we extend the construction to criticality. While Astala, Jones, Kupiainen and Saksman used a white noise representation for the Gaussian Free Field, we use a vaguelet representation.

Over the last decades there has been an interest in conformally invariant fractals which could arise as scaling limits of discrete random processes in the plane. We provide an instance of such fractals in the form of Jordan (simple, closed, locally connected) curves. 

One of the most important example of conformally invariant fractals is Schramm-Loewner Evolution (introduced by Schramm, \cite{Sch00}). One version describes a random curve evolving in the disk from a point on the boundary to an interior point. The second version describes a curve evolving in the upper half plane from one point on the boundary to another point on the boundary (typically infinity). The construction depends on a parameter $\kappa\in[0,8]$. It has been shown that the scaling limits of some discrete processes are $SLE_\kappa$ curves. For example, Lawler, Schramm and Werner (\cite{LSW04}) proved that the loop erased random walk converges to $SLE_2$. Another example is the percolation exploration process which converges to $SLE_6$  on a triangular lattice as proven by Smirnov (\cite{Sm01}). For more information on $SLE_\kappa$ and an introduction to percolation we refer the reader to G. Lawler's book (\cite{L05}) and W. Werner's notes(\cite{W09}).

Unlike $SLE_\kappa$, the random curves constructed  in the present work (and in \cite{AJKS09}) do not evolve in the plane and are closed. Our curves come in two varieties. The first is the result of welding a deterministic disk to a second one by a random homeomorphism which arises by exponentiating the Gaussian Free Field. The second variety is the result of welding two disks on their boundaries by a random homemorphism which arises from two independent Gaussian Free Fields.

The subcritical case (inverse temperature less than the critical value) has also been studied by Sheffield (\cite{S10}). Using different methods he welded together two disks using two independent Gaussian Free Fields to obtain $SLE_\kappa$ (where $\kappa$ equals twice inverse temperature). Starting with a GFF on a disk, Sheffield defines a random area measure on the disk. The conformal equivalence class of the disk and random area measure is a model for random surfaces known as Liouville quantum gravity. The surfaces are called quantum surfaces. Sheffield welded together two quantum surfaces of normalized quantum area by matching quantum length on the boundaries. The two GFF version of Astala, Jones, Kupiaien, Saksman welds together two quantum surfaces by matching normalized quantum length on their boundaries. It is not clear whether the two GFF version of \cite{AJKS09} are $SLE_\kappa$ curves. Binder and Smirnov claim that the two sided version of the subcritical Jordan curves look like $SLE_\kappa$ when the local dimension of the curves is considered (\cite{BS12}). In addition, they proved that the one sided version is not $SLE_\kappa$ (reported by Sheffield in \cite{S10}).




To construct the closed random curves we follow the broad framework developed in \cite{AJKS09}. We define a random homeomorphism by exponentiating (and normalizing) the Gaussian Free Field. We then solve the conformal welding problem for this homeomorphism. While the approach is the same as in \cite{AJKS09}, the details are subtantially different.

The Gaussian Free Field is a random distribution of great importance in statistical physics and has been studied extensively in both the physics and the mathematics literature. For a mathematical introduction see \cite{S07} as well as the introductions of \cite{SS09} and \cite{SS10}. In \cite{AJKS09}, the authors used a white noise representation for the GFF. We use a vaguelet representation which was suggested to us by Peter W. Jones. Vaguelets are functions very similar and related to wavelets and appear, for example, in the work of Donoho (\cite{D95}) and Meyer and Coifman (\cite{MC97}). The vaguelet representation allows us to work with the Gaussian Free Field scale by scale. 

The procedure of exponentiating the Gaussian Free Field to get a random measure and then a homeomorphism appears also in the work of Sheffield (\cite{S10}) and Duplantier and Sheffield(e.g. \cite{DS11}). The measure constructed is a type of multifractal multiplicative cascade as introduced by Mandelbrot (\cite{M74}) and studied, among others, by Kahane (\cite{K85}), Kahane and Peyriere (\cite{KP76}), Bacry and Muzy(\cite{BM03}) and Robert and Vargas(\cite{RV08}). Multiplicative cascades appear also in mathematical finance as an important part of the Multifractal Model for Asset Returns introduced by Mandelbrot, Fisher and Calvet (\cite{MFC97}).

Most of the effort in this work is spent on solving the conformal welding problem. The classical result on this topic is that quasi-symmetric homeomorphisms are welding maps (see e.g. \cite{LV73}) and thus give rise to conformal weldings. Lehto (\cite{L70}) and David (\cite{D88}) relaxed this assumption. Oikawa (\cite{O61}) and Vainio (\cite{V85}, \cite{V89} and \cite{V95}) provided examples of homeomorphisms which are not welding maps. Hamilton relaxed the concept of welding map (introducing the generalized welding) in \cite{H91} and Bishop proved that every homeomorphism is almost a conformal welding in the sense that one can modify it on a small set and make it a welding map (see \cite{B06}). We refer the reader to \cite{H91} and \cite{B06} for more information on the welding problem and related topics. In our setting the random homeomorphism fails to be quasi-symmetric and we have to employ tools developed by Lehto in \cite{L70} to get a welding. 

We also prove that the random curves we construct are unique up to Moebius transformations. The uniqueness is equivalent to the uniqueness of the solutions of the Beltrami equation (see section \ref{SectionConformalWelding} for more details). While for quasi-symmetric homeomorphisms this follows from the measurable Riemann mapping theorem, in our case it is a consequence of a deep theorem by Jones and Smirnov (\cite{JS00}) and its extension by Nienminen and Koskela (\cite{KN05}) on conformal removability. There are cases of "wild" homeomorphisms where the welding is far from unique (see \cite{B94} and \cite{B06}).

The present work proceeds as follows: In section \ref{SectionResults} we state the main theorems. In section \ref{SectionConformalWelding} we introduce the conformal welding problem and the approach we use to solve it. We also state the main probabilistic estimate we need to prove the theorems and solve the conformal welding problem. Section \ref{SectionVaguelets} presents the construction of the vaguelets and proves some of their properties. Section \ref{SectionGFFviaVaguelets} introduces the Gaussian Free Field and its representation in terms of Fourier series, vaguelets and white noise. Section \ref{SectionRandomMeasure} outlines the construction of the random measure, its properties and presents also the proofs of these properties.  The last part gives a proof of the main probabilistic estimate by describing the decoupling and estimating the distributional properties of the random variables involved (section \ref{SectionDecoupling}). It then describes the construction of the random tree and its survival properties(section \ref{SectionStoppingTime}), gives the proof of the main modulus estimate we need (section \ref{SectionModulusEstimate}) and completes the probabilistic estimates. 

{\bf Acknowledgments }The present work was done during my graduate studies at Yale University. I am grateful to my thesis advisor, Peter W. Jones, for proposing this problem and for his support during my PhD. I am also grateful to Ilia Binder for several discussions on the topic.

\section{Results}\label{SectionResults}

Following \cite{AJKS09} we can define the restriction of the (random distribution) Gaussian Free Field on the circle by:
\begin{equation}
X = \sum_{n=1}^{\infty}\frac{A_n\cos(2\pi n \theta) + B_n\sin(2\pi n \theta)}{\sqrt{n}}
\end{equation}
where $A_n,  B_n \sim N(0,1)$ are independent.
In what follows we will use the alternative representation
\begin{equation}\nonumber
X = \sum_{I}A_I \psi_I
\end{equation}
where $A_I\sim N(0,1)$ are independent and $\{\psi_I\}$ are periodized half-integrals of wavelets. They are called vaguelets and appear for example in Donoho (\cite{D95}) and Meyer and Coifman (\cite{MC97}). Vaguelets satisfy essentially the same properties as wavelets.  

We can now consider a sequence $t_{n}$ and define the following random measure
\begin{equation}
d\nu := \lim_{k\rightarrow\infty}e^{\sum_{|I|\geq 2^{-k}}(a_I\psi_I(\theta) - t_{\log{\frac{1}{|I|}}}\psi_I^2(\theta)/2)}d\theta
\end{equation}
where $a_I\sim N(0, t_{\log{\frac{1}{|I|}}})$. 
Kahane proved in \cite{K85} that if $t_n = t < t_c=2$ this limit exists, is a non-zero, finite, non-atomic singular measure almost surely. 

We extend the result to certain increasing sequences $t_n\rightarrow t_c =2$. More precisely, given $\gamma < 1$ we take $t_n = t_c - k^{-\gamma}$ for all $n\in [n_k,n_{k+1})$, where $n_k \sim (k+1)^{\gamma}e^{C(k+1)^{3\gamma (k+1)^\gamma}/\epsilon}$,  $C$ is a large constant and $\epsilon>0$ small. 

The next step is to define the random homeomorphism $h:\BBT\rightarrow \BBT$:
\begin{equation}\label{weldinghomeo}
h(\theta) := \nu([0,\theta))/\nu([0,2\pi)) \mbox{ for } \theta\in [0,2\pi).
\end{equation}
Our goal is to solve the conformal welding problem for this homeomorphism: we seek two Riemann mappings $f_+:\BBD\rightarrow \Omega_+$ and $f_{-}:\BBD_{\infty}\rightarrow \Omega_-$ onto the complement of a Jordan curve $\Gamma$ such that $h = (f_+)^{-1}\circ f_-$. 

\begin{theorem}
Almost surely, formula \ref{weldinghomeo} defines a continuous circle homeomorphism, such that the welding problem has a solution $\Gamma$. The curve $\Gamma$ is a Jordan curve and bounds a domain $\Omega = f_{+}(\BBD)$ with Riemann mapping $f_{+}$ having the modulus of continuity better than $\delta(t)= e^{-(\log\frac{1}{t})^{1-2\epsilon}}$. For a given realization $\omega$, the solution is unique up to Moebius transformations. 
\end{theorem}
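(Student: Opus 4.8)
The plan is to decompose the theorem into three largely separate tasks — (i) $h$ is a genuine continuous circle homeomorphism, (ii) the welding problem has a solution which is a Jordan curve with the stated modulus of continuity, and (iii) uniqueness up to Moebius transformations — and to reduce each to the main probabilistic estimate announced in Section \ref{SectionConformalWelding} together with the classical welding machinery of Lehto and the removability results of Jones--Smirnov and Koskela--Nieminen.

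\textbf{Step 1: $h$ is a homeomorphism.} I would first show that, almost surely, the limiting measure $\nu$ constructed in Section \ref{SectionRandomMeasure} is non-atomic, has full support on $\BBT$, and is finite and non-zero. Non-atomicity and positivity on every arc are exactly what make $\theta\mapsto\nu([0,\theta))/\nu([0,2\pi))$ a strictly increasing continuous bijection of $\BBT$; the finiteness and non-triviality are the criticality analogues of Kahane's theorem, and I expect them to follow from the multiplicative-cascade/martingale structure of $\nu$ together with the carefully tuned sequence $t_n\to t_c=2$ (the role of the rapidly growing $n_k$ being to slow the approach to criticality enough that the martingale stays uniformly integrable along the relevant subsequence). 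This step should be essentially bookkeeping once the properties of $\nu$ from Section \ref{SectionRandomMeasure} are in hand.

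\textbf{Step 2: existence of the welding and the modulus of continuity.} Here the strategy is the one pioneered in \cite{L70} and used in \cite{AJKS09}: extend $h$ to a homeomorphism $H$ of an annulus (or of $\BBD$) and control, via Lehto's integral criterion, the modulus of the family of curves separating the two boundary components at dyadic scales. The key input is the main probabilistic estimate: a power-law bound on the probability that the conformal modulus ("eccentricity") of the relevant quadrilaterals is large at scale $2^{-n}$, uniformly over location, which by a Borel--Cantelli / telescoping argument gives that Lehto's integral converges. This produces a quasiconformal (more precisely, a David-type or degenerate-Beltrami) extension whose Beltrami coefficient is integrable to the right order; solving the corresponding Beltrami equation and reading off the boundary trace yields the Riemann maps $f_\pm$ and the curve $\Gamma$. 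The modulus of continuity $\delta(t)=e^{-(\log\frac1t)^{1-2\epsilon}}$ then comes from converting the modulus estimates into an oscillation bound for $f_+$ on shrinking arcs — i.e. a distortion estimate for conformal maps governed by the Lehto integral, which degrades from Hölder (as in the quasisymmetric case) to this logarithmic-type modulus precisely because the homeomorphism is only "almost" quasisymmetric at criticality. \textbf{This is the main obstacle}: obtaining the probabilistic modulus estimate with exponents good enough to both guarantee convergence of the Lehto integral and yield the claimed $\delta$, which is why Sections \ref{SectionDecoupling}--\ref{SectionModulusEstimate} (decoupling the field scale-by-scale via the vaguelet representation, building the random tree and proving its survival, and the modulus estimate itself) carry the bulk of the work.

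\textbf{Step 3: uniqueness.} Once $\Gamma$ is known to have modulus of continuity at least $\delta$, I would invoke the conformal removability criteria of Jones--Smirnov \cite{JS00} and their extension by Koskela--Nieminen \cite{KN05}: a Jordan curve whose Riemann map has a modulus of continuity of this Hölder-logarithmic type is conformally removable. Given two solutions $(f_\pm)$ and $(\tilde f_\pm)$ of the welding, the map that is $\tilde f_+\circ f_+^{-1}$ on $\Omega$ and $\tilde f_-\circ f_-^{-1}$ on the exterior agrees across $\Gamma$ (because both weld the same $h$), hence is a homeomorphism of $\widehat{\BBC}$ that is conformal off the removable set $\Gamma$; removability forces it to be globally conformal, i.e. a Moebius transformation, which is exactly the asserted uniqueness. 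The only thing to check carefully is that the modulus of continuity established in Step 2 indeed meets the hypotheses of \cite{JS00,KN05} — a matter of matching the exponent $1-2\epsilon$ against their removability threshold.
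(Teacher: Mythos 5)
Your decomposition matches the paper's: (i) the homeomorphism property of $h$ follows from the measure-theoretic results of Section \ref{SectionRandomMeasure} (Theorem \ref{MeasureProperties} gives non-atomicity, positivity on all arcs, and finiteness); (ii) the welding is solved by extending $h$ via Ahlfors--Beurling, bounding the distortion by the doubling behaviour of $\nu$, and applying a Lehto-type existence criterion for degenerate Beltrami equations; (iii) uniqueness follows from the Jones--Smirnov/Koskela--Nieminen removability threshold once the modulus of continuity is established. You also correctly identify that Sections \ref{SectionDecoupling}--\ref{SectionModulusEstimate} carry the bulk of the work. The route is the same; the proposal is essentially correct.

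Two points, however, blur a genuine novelty of the paper and would cause trouble if you tried to execute Step 2 as written. First, you describe the main probabilistic input as ``a power-law bound on the probability that the conformal modulus of the relevant quadrilaterals is large at scale $2^{-n}$, uniformly over location.'' That single-scale formulation is precisely what works in \cite{AJKS09} at subcritical temperature but \emph{fails} at criticality: as $t_n\to t_c=2$, one loses control of the distortion at individual scales (the constants $C_n, D_n$ from Lemma \ref{DistribIneqL} blow up), so the probability of a bad annulus at a fixed scale does not decay. The paper's estimate (\ref{MainProbEstimate}) is instead a bound on the probability that the \emph{sum} of moduli over an entire block of $N_n$ scales is too small; the scales are handled in chunks $[\tilde\rho_n\rho_n^{N_n},\tilde\rho_n]$, only a positive fraction of scales in each chunk need to be good, and even on the good scales one does not ask for bounded distortion but merely for controlled growth via the surviving $d$-ary subtree of Section \ref{SectionStoppingTime}. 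This chunking is the essential deviation from \cite{AJKS09}, and without it the Borel--Cantelli step in Theorem \ref{SolutionRandomWeldingProblem} would not close. Second, you say ``solving the corresponding Beltrami equation'' as if the degenerate solution is handed to you; the paper actually constructs it by approximating $\mu$ by the uniformly elliptic $\mu_l=\tfrac{l}{l+1}\mu$, proving equicontinuity of the resulting quasiconformal family (and of the inverses, using $L^1$ integrability of the distortion from Lemma \ref{IntegrabilityOfDistortion}), and passing to a limit via Arzel\`a--Ascoli. These details matter because the $W^{1,1}_{\mathrm{loc}}$ regularity, the homeomorphism property of the limit, and the modulus of continuity all come out of that approximation-and-compactness step, not from an abstract existence theorem for David-type Beltrami coefficients.
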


As mentioned above, this theorem is an extension of the result of Astala, Jones, Kupiainen, Saksman (\cite{AJKS09}). In their paper, the variances $t_n$ were all equal to a value strictly less than the critical one. In addition, the Riemann mapping $f_{+}$ was Hoelder continuous. By contrast, the variances here tend to the critical value, and the Riemann mapping satisfies a weaker modulus of continuity.  

The construction presented here can be used to construct weldings also for sequences $t_k$ which converge to $t_c=2$ faster than in the theorem. However, the modulus of continuity that we obtain in those cases will be worse than $e^{-\sqrt{\log\frac{1}{t}}}$ and we cannot prove the uniqueness of the welding up to Moebius transformations. Uniqueness, in our setting, is a consequence of the removability results of Jones and Smirnov (\cite{JS00}) and Koskela and Nieminen(\cite{KN05}) for conformal mappings. These results provide a sufficient modulus of continuity for conformal removability, which, in turn, implies uniqueness of the welding. It is not known what the optimal modulus of continuity is that ensures removability (see the papers \cite{JS00}, \cite{KN05}, as well as the paper by Jones and Makarov on harmonic measure \cite{JM95}).

Consider now two independent Gaussian Free Fields and construct two independent random homeomorphisms $h_1$ and $h_2$. The proof of the following theorem is the same as of the first.

\begin{theorem}
Almost surely, formula \ref{weldinghomeo} for two independent GFFs defines continuous circle homeomorphisms, such that the welding problem for homeomorphism $h_1\circ h_2^{-1}$ has a solution $\Gamma$. The curve $\Gamma$ is a Jordan curve and bounds a domain $\Omega = f_{+}(\BBD)$ with Riemann mapping $f_{+}$ having the modulus of continuity better than $\delta(t)= e^{-(\log\frac{1}{t})^{1-2\epsilon}}$. For a given realization $\omega$, the solution is unique up to Moebius transformations. 
\end{theorem}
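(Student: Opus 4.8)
The plan is to reduce the two-GFF statement to the one-GFF theorem already proved, treating it as essentially a corollary. The key observation is that solving the welding problem for $h_1 \circ h_2^{-1}$, where $h_1, h_2$ arise from two \emph{independent} GFFs, amounts to the same analytic problem as the one-sided version once we recognize that $h_1$ and $h_2$ are each, individually, of the type handled by the first theorem. Concretely, I would first record that by the first theorem, applied to each field separately, almost surely $h_1$ and $h_2$ are continuous circle homeomorphisms; since the two GFFs are independent, this holds jointly almost surely. Thus $h := h_1 \circ h_2^{-1}$ is almost surely a continuous circle homeomorphism.

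The second step is the welding itself. Rather than welding $\BBD$ to $\BBD_\infty$ by $h$ directly, I would weld \emph{two} copies of the unit disk carrying the measures $\nu_1, \nu_2$ induced by the two fields: let $f_1 : \BBD \to \Omega_+$ and $f_2 : \BBD_\infty \to \Omega_-$ be Riemann maps onto the two complementary components of a Jordan curve $\Gamma$, with $h_1 \circ h_2^{-1} = f_1^{-1} \circ f_2$ on $\BBT$. The crucial point — and where I would lean hardest on the machinery of the earlier sections — is the main probabilistic/modulus estimate referenced in Section~\ref{SectionConformalWelding}: it is stated there for the homeomorphism coming from \emph{one} field, but because $h_1$ and $h_2$ are independent and each is controlled by that estimate, the Lehto-type integral bound that produces a quasiconformal extension and controls its dilatation applies to the composition as well. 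I would spell out that the Beurling--Ahlfors / Lehto estimate on the modulus of continuity is subadditive under composition in the relevant sense, so the composed map inherits a modulus of continuity no worse than $\delta(t) = e^{-(\log\frac{1}{t})^{1-2\epsilon}}$ for the Riemann map $f_+ = f_1$. This is the step I expect to be the main obstacle: one must verify that the decoupling and tree-survival arguments of Sections~\ref{SectionDecoupling}--\ref{SectionModulusEstimate}, which are tailored to a single field, genuinely tensorize over two independent copies without loss in the exponent, rather than merely producing a weaker bound.

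The third step is uniqueness up to Möbius transformations. Here the argument is identical to the one-field case: the modulus of continuity $\delta(t) = e^{-(\log\frac{1}{t})^{1-2\epsilon}}$ of $f_+$ is, by the removability results of Jones--Smirnov (\cite{JS00}) and Koskela--Nieminen (\cite{KN05}), sufficient to guarantee that $\Gamma$ is conformally removable. Conformal removability of $\Gamma$ forces any two solutions of the welding problem to differ by a global conformal automorphism of the sphere, i.e.\ a Möbius transformation, by the standard argument: glue the two candidate solutions along $\Gamma$ to obtain a homeomorphism of the sphere that is conformal off the removable set $\Gamma$, hence conformal everywhere. Since this part does not interact with the number of fields involved, I would simply invoke the first theorem's proof verbatim.

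In summary, the proof is: (i) almost-sure simultaneous well-definedness and continuity of $h_1, h_2$, hence of $h_1 \circ h_2^{-1}$, from the first theorem plus independence; (ii) existence of the welding $\Gamma$ and the modulus-of-continuity bound on $f_+$, obtained by checking that the main probabilistic estimate of Section~\ref{SectionConformalWelding} survives the passage to a composition of two independent homeomorphisms — the technical heart of the matter; (iii) uniqueness via conformal removability, unchanged from the one-field argument. The bulk of the new work, and the only genuine obstacle, is confirming in step (ii) that independence of the two fields lets the scale-by-scale vaguelet estimates combine cleanly.
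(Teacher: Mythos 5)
Your overall plan is aligned with the paper's, which simply remarks that the proof of the two-GFF theorem is the same as the one-GFF theorem and gives the reduction in Section~\ref{SectionConformalWelding}. But your language in step (ii) misstates the actual mechanism, and it is worth being precise about what changes. Nothing about the argument involves a ``composition'' of maps whose modulus of continuity one shows to be ``subadditive.'' The correct reduction, which the paper spells out, is this: extend $h_1$ (as in Section~\ref{SectionRandomConformalWelding}) to a map on $\BBD$ with Beltrami coefficient $\mu_1$, extend $h_2$ analogously on $\BBD_\infty$ with coefficient $\mu_2$, and solve the \emph{single} global Beltrami equation with the piecewise coefficient $\mu = \mu_1 \mathbf{1}_{\BBD} + \mu_2 \mathbf{1}_{\BBD_\infty}$. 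One does not weld each $h_i$ and then combine; one builds one distortion field and runs the Lehto/modulus machinery on it once. Local uniqueness of the Beltrami solution then gives the factorizations $F = f_+ \circ f_1$ on $\BBD$ and $F = f_- \circ f_2$ on $\BBD_\infty$, from which $f_+ \circ h_1 = f_- \circ h_2$ on $\BBT$ is read off; $f_+$ is the conformal factor, not the extension $f_1$.

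The genuine new content in the two-field case is precisely where you point, but phrased wrongly: the modulus estimate of Section~\ref{SectionModulusEstimate} assumes $\mu = 0$ in the lower half-plane. With two fields, both half-annuli carry distortion, so one must run the stopping-time tree construction of Section~\ref{SectionStoppingTime} on both sides and combine the two surviving trees in the modulus estimate. Independence of the two GFFs is exactly what makes this work with no loss in the exponent: the failure probabilities for the upper and lower halves are independent, so the union bound over the two sides costs only a factor of $2$, and the metric $\rho$ in the modulus estimate is built from the two trees separately on the two sides of $\BBR$. You describe this correctly as a ``tensorizing'' issue, but it is not a statement about compositions or about the Lehto integral applying ``to the composition''; it is a statement about the Beltrami coefficient having support on both sides of $\BBT$, controlled by two independent copies of the decoupling and tree estimates. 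Also note that the Lehto criterion produces a degenerate (not quasiconformal) homeomorphic solution in $W^{1,1}_{loc}$, so ``quasiconformal extension'' is the wrong term. The uniqueness step (iii) is fine as you state it.
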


The second theorem is related to a result of Sheffield (\cite{S10}). He proved that welding two disks using subcritical GFFs yields $SLE_\kappa$ with $\kappa =2 t$. However, there are two differences. The first difference is that he welds two disks after having normalized their random area (defined in a similar way to the measure $\nu$ above). In this construction, as well as in \cite{AJKS09}, one first normalizes the random length of the boundary. The second difference is that Sheffield deals with the subcritical case and we deal with the critical one. One can ask whether our construction produces curves which are $SLE_4$. The answer is probably no. If the Astala, Jones, Kupiainen, Saksman construction yields $SLE_\kappa$, our curves are perturbations of different $SLE_\kappa$ curves at different scales. However, as Peter Jones has pointed out in a private communication, these do not converge to $SLE_4$. 

Astala, Jones, Kupiainen and Saksman prove their theorem by first proving estimates on the random measure and then solving a degenerate Beltrami equation using a criterion of Lehto (see section \ref{SectionConformalWelding} for references and details). In order to apply this criterion they have to decouple the distortion and prove that the different scales behave roughly independently. 

In the present work we combine the properties of the vaguelets with a martingale square function argument to prove the existence of the random measure $\nu$. We also describe several of its properties. We then do a decoupling, a stopping time argument and a modulus estimate to ensure that with high proability the distortion in the Beltrami equation does not diverge too fast. Finally, one has to do a careful decoupling of the Gaussian Free Field via vaguelets to show that the scales are practically independent.

\section{The welding problem} \label{SectionConformalWelding}

Our goal is to contruct a random Jordan curve in the plane. We accomplish this by solving the conformal welding problem with a random homeomorphism. Let $\BBT$, $\BBD$ and $\BBD_{\infty}$ be the unit circle, open unit disk and the complement of the closed unit disk respectively. The conformal welding problem is as follows. 

Let $\phi$ be a homeomorphism on $\BBT$. We seek two Riemann mappings $f_+:\BBD\rightarrow \Omega_+$ and $f_{-}:\BBD_{\infty}\rightarrow \Omega_-$ onto the complement of a Jordan curve $\Gamma$ such that $\phi = (f_+)^{-1}\circ f_-$. 

Our strategy is to find these mappings by solving the Beltrami equation. Assume that $\phi = f|_\BBT$ where $f\in W^{1,2}_{loc}(\BBD,\BBD)\cap C(\BBD)$ is a homeomorphism and a solution of the Beltrami differential equation
\begin{equation}\nonumber
\frac{\partial f}{\partial \overline{z}} = \mu(z)\frac{\partial f}{\partial z}, z\in \Omega = \BBD.
\end{equation}
$\mu$ is called Beltrami coefficient. 
If the following Beltrami equation
\begin{equation}\nonumber
\frac{\partial F}{\partial \overline{z}} = 1_{\BBD}(z)\mu(z)\frac{\partial F}{\partial z}, z\in \BBC.
\end{equation}
has a unique (normalized) solution $F$, we can take $f_- = F|_{\BBD_{\infty}}$. The uniqueness implies $F$ can be factored as $F = f_+ \circ f$ for a conformal mapping $f_+:\BBD \rightarrow F(\BBD)$. Then we will have $\phi = (f_+)^{-1}\circ f_-$. The road we take is now plain: starting with the homeomorphism $\phi$, we find a mapping $f$ which is an extension of $\phi$ to the disk. Then we solve the Beltrami equation on the entire plane. Two questions arise: how do we extend the function $\phi$ to the disk? and can we solve the Beltrami equation for that extension? 

It is part of classical complex function theory that the Beltrami equation admits unique (normalized) solutions whenever $||\mu||_\infty< 1$ ($\mu$ is called uniformly elliptic). An extensive reference on the topic is \cite{AIM09}. 

The homeomorphic solutions (in $W^{1,2}(\BBC)$) of the uniformly elliptic Beltrami equation are called {\it quasiconformal} mappings and have many interesting properties. While a conformal mapping maps infinitesimal disks to infinitesimal disks, quasiconformal mappings map infinitesimal disks to infinitesimal ellipses. The Beltrami coefficient $\mu$ is also called "ellipse field" and describes this infinitesimal correspondence. 

An important quantity associated with a quasiconformal mapping/Beltrami coefficient is denoted by  $K = \frac{1+|\mu|}{1-|\mu|}$ and is called the distortion of the mapping. If $||\mu||_\infty< 1$, the distortion is bounded. 

An equivalent definition of quasiconformality is in terms of moduli of annuli. The modulus of an annulus can be defined in several equivalent ways. We first introduce the modulus of a family of curves. Given a family of locally rectifiable curves $\Gamma$ in domain $\Omega\subset\BBC$ the (conformal) modulus is the quantity 
\begin{eqnarray}
mod(\Gamma) = mod_ \Omega(\Gamma) = \inf\{\int_\Omega \rho^2 dA(x,y)\}
\end{eqnarray}
where the infimum is over all {\it metrics} $\rho:\Omega\rightarrow[0,\infty]$ such that $\int \rho ds\geq 1$ for all curves $\gamma\subset\Gamma$. Such metrics are called {\it admissible}.

The modulus is a conformal invariant: $mod(\Gamma) = mod(F(\Gamma))$ for any conformal $F$. For quasiconformal mappings with distortion $K$ we have $mod(F(\Gamma))/K\leq mod(\Gamma)\leq Kmod(\Gamma)$ for any family of curves $\Gamma$. In fact, the previous definition is equivalent to the almost invariance of the conformal modulus (for more details see the book \cite{LV73}). 

The modulus of an annulus is given by $\mod(A(r, R)) = \frac{1}{2\pi}\log(\frac{R}{r})$ and it is the same as the modulus of the family of curves which separates the two pieces of the complement of $A$. A third equivalent definition of quasiconformality is in terms of moduli of topological annuli: a mappings $F$ is quasiconformal if and only if the modulus of topological annuli is distorted by at most a multiplicative factor $K$.

The mappings $\phi$ which can be written as $f|_{\BBT}$ with $\mu$ uniformly elliptic are called quasisymmetric and satisfy $K = \sup_{s,t}\frac{|\phi(t+s)|-\phi(t)|}{|\phi(t-s) - \phi(t)|} < \infty$.

In our setting (and the one of Astala, Jones, Kupiaien, Saksman), the mapping $\phi$ is not quasisymmetric. While it can be written as the restriction of a mapping $f$, the corresponding $\mu$ is not uniformly elliptic. The mapping $f$ is called {\it degenerate} quasiconformal because the distortion, while finite almost everywhere, is unbounded.

O. Lehto (\cite{L70}) proved a very general criterion which ensures the existence of solutions to the Beltrami equation in the degenerate case.

Define:
\begin{equation}
L_K(z,r,R) =\int_r^R \frac{1}{\int_0^{2\pi} K(z + \rho e^{i\theta})d\theta}\frac{d\rho}{\rho}
\end{equation}

\begin{theorem}[\cite{AIM09}, p.584]
Suppose $\mu$ is measurable, compactly supported and satisfies $|\mu(z)|<1$ almost everywhere on $\BBC$. If the distortion function $K = \frac{1+|\mu|}{1-|\mu|}$ is locally integreable and for some $R_0$, the Lehto integral satisfies:
\begin{equation}
L_K(z,0,R_0) = \infty,  \forall z\in \BBC
\end{equation}
then the Beltrami equation 
\begin{equation}\nonumber
\frac{\partial f}{\partial \overline{z}} = \mu(z)\frac{\partial f}{\partial z}, z\in \BBC.
\end{equation}
 has a homeomorphic solution in $W^{1,1}_{loc}$.
\end{theorem}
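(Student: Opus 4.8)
I would prove this by the classical approximation scheme, with Lehto's modulus estimate as the engine. First I truncate: set $\mu_n=1_{\{|\mu|\le 1-1/n\}}\,\mu$, so that $\|\mu_n\|_\infty<1$, $|\mu_n|\le|\mu|$ pointwise (hence the distortions satisfy $K_n:=\tfrac{1+|\mu_n|}{1-|\mu_n|}\le K$), and $\mu_n\to\mu$ a.e.\ since $|\mu|<1$ a.e. By the classical uniformly elliptic theory each $\mu_n$ has a quasiconformal homeomorphic solution $f_n\colon\widehat{\BBC}\to\widehat{\BBC}$ of $\partial_{\bar z}f_n=\mu_n\,\partial_z f_n$, which I normalize to fix $0,1,\infty$; outside a fixed ball containing $\operatorname{supp}\mu$ each $f_n$ is conformal. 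Two facts drive the argument: the pointwise identity $|Df_n|^2=K_n J_{f_n}$, and the Lehto modulus estimate
\[
\mod\bigl(f_n(A(z_0,\rho,R))\bigr)\ \ge\ c\,L_{K_n}(z_0,\rho,R)\ \ge\ c\,L_{K}(z_0,\rho,R)
\]
for round annuli, $c>0$ absolute, the last inequality being $K_n\le K$. Since $K\in L^1_{loc}$ forces $L_K(z_0,R,R_0)\le\tfrac1{2\pi}\log(R_0/R)<\infty$ for each $R>0$, the hypothesis $L_K(z_0,0,R_0)=\infty$ gives $L_K(z_0,\rho,R)\to\infty$ as $\rho\downarrow0$, for every $z_0$ and every fixed $R\in(0,R_0]$.

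Next I extract the limit. The image ring $f_n(A(z_0,\rho,R_0))$ separates the bounded continuum $f_n(\overline{B(z_0,\rho)})\ni f_n(z_0)$ from the unbounded one; since the distance between the two complementary components is at most $\operatorname{diam}\widehat{\BBC}$, the Teichm\"uller ring estimate turns the lower bound on the modulus into $\operatorname{diam}_{\mathrm{sph}}f_n(\overline{B(z_0,\rho)})\le\phi\bigl(c\,L_K(z_0,\rho,R_0)\bigr)$ with $\phi(M)\to0$ as $M\to\infty$; as $L_K(z_0,\rho,R_0)\to\infty$ uniformly in $n$, this is equicontinuity of $\{f_n\}$ on $\widehat{\BBC}$. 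By Arzel\`a--Ascoli a subsequence converges uniformly to a continuous $f$ fixing $0,1,\infty$; in particular $f$ is nonconstant. To see $f$ is injective, suppose $f(z_0)=f(z_1)=:w_0$ with $z_0\ne z_1$ (the case where one of them is $0,1,\infty$ being handled by a M\"obius pull-back). Since $f$ is nonconstant, after moving $z_0$ to a boundary point of its constancy region if necessary, there are points $p\to z_0$ with $f(p)\ne w_0$. Fix $R<\min(R_0,|z_0-z_1|)$ small enough that $B(z_0,R)$ avoids $0$ and $1$, choose $\rho\in(0,R)$ with $c\,L_K(z_0,\rho,R)$ exceeding the ring constant named below, and then pick $p$ with $|p-z_0|<\rho$. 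The image ring $f_n(A(z_0,\rho,R))$ separates $E_n:=f_n(\overline{B(z_0,\rho)})$ from $F_n:=f_n(\widehat{\BBC}\setminus B(z_0,R))$; since $f_n(z_0),f_n(p)\in E_n$ and $f_n(p)\to f(p)\ne w_0=\lim f_n(z_0)$, while $f_n(z_1),\infty$ and one of $0,1$ lie in $F_n$, both $E_n$ and $F_n$ have spherical diameter bounded below, yet $\operatorname{dist}(E_n,F_n)\le|f_n(z_0)-f_n(z_1)|\to0$. For a ring separating two continua each of diameter exceeding the distance between them, the Teichm\"uller estimate caps the modulus by an absolute constant (the ring constant above), contradicting $\mod(f_n(A(z_0,\rho,R)))\ge c\,L_K(z_0,\rho,R)$. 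Hence $f$ is a continuous injection of the compact space $\widehat{\BBC}$, so a homeomorphism.

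For the regularity, Cauchy--Schwarz and $|Df_n|^2=K_nJ_{f_n}$ give, for any ball $B$ and measurable $S\subseteq B$,
\[
\int_S|Df_n|\ \le\ \Bigl(\int_S K_n\Bigr)^{1/2}\Bigl(\int_S J_{f_n}\Bigr)^{1/2}\ \le\ \Bigl(\int_S K\Bigr)^{1/2}\,|f_n(B)|^{1/2}.
\]
The sets $f_n(B)$ are uniformly bounded (equicontinuity and convergence) and $\int_S K$ is uniformly small when $|S|$ is small since $K\in L^1_{loc}$, so $\{Df_n\}$ is equi-integrable on compacta; by Dunford--Pettis it is weakly sequentially precompact in $L^1_{loc}$, and as $f_n\to f$ uniformly one gets $Df_n\rightharpoonup Df$ weakly in $L^1_{loc}$, whence $f\in W^{1,1}_{loc}$. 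Finally, passing to the limit in $\partial_{\bar z}f_n=\mu_n\,\partial_z f_n$, using $\mu_n\to\mu$ a.e., $|\mu_n|\le1$, and the equi-integrability of $\partial_z f_n$ (Egorov off a set of small measure), yields $\partial_{\bar z}f=\mu\,\partial_z f$ a.e.

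The step I expect to be the main obstacle is the injectivity of the limit---excluding that the $f_n$ collapse a nondegenerate continuum to a point. Equicontinuity alone does not do this; it is exactly the hypothesis $L_K\equiv\infty$, fed into the Lehto modulus estimate for images of round annuli and combined with the normalization at $0,1,\infty$ (which keeps $\operatorname{diam} F_n$ bounded below), that forbids collapse. Correspondingly, the Lehto estimate itself---the modulus lower bound for $f_n(A(z_0,\rho,R))$, proved by the length--area method on the family of curves separating the two boundary circles---is the key input, invoked both for the equicontinuity of $\{f_n\}$ and for the non-degeneracy of the limit.
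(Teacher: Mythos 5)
The paper does not give its own proof of this theorem; it is cited directly from \cite{AIM09} (p.\ 584), and your argument is, in substance, the standard approximation scheme used there: truncate $\mu$, apply the measurable Riemann mapping theorem, use Lehto's modulus bound $\operatorname{mod}\bigl(f_n(A(z_0,\rho,R))\bigr)\geq c\,L_{K_n}(z_0,\rho,R)\geq c\,L_K(z_0,\rho,R)$ together with the Teichm\"uller ring estimate for equicontinuity and non-degeneracy of the limit, and Cauchy--Schwarz with $|Df_n|^2=K_nJ_{f_n}$ plus $K\in L^1_{loc}$ for equi-integrability and $W^{1,1}_{loc}$ regularity. So your proof is correct and matches the cited source; the only place I would tighten the exposition is the injectivity step, where ``moving $z_0$ to a boundary point of its constancy region'' is doing real work and deserves a precise statement (for instance, via the observation that $f^{-1}(w_0)$ is closed, so if it has interior one takes $z_0$ on the boundary of that interior, guaranteeing nearby points $p$ with $f(p)\neq w_0$ while keeping a second point $z_1\in f^{-1}(w_0)$ at positive distance).
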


We will be using a version of this theorem to prove the main result.  Lehto's result says that if around every point one can find an infinite number of annuli whose conformal modulus is not distorted much by a mapping with Beltrami coefficient $\mu$, then the Beltrami equation has a solution. We will prove that we can indeed find an infinite number of "good" annuli (i.e. annuli which are not distorted too much) around every point. 

Astala, Jones, Kupiainen, Saksman worked directly with the Lehto integral above. We will work with moduli of annuli. 

To solve the welding problem (i.e. prove the existence) it suffices to have the local uniqueness of the solution of the Beltrami equation. This we have because the distortion degenerates as we move closer to $\BBT=\partial\BBD$ and it is bounded inside the unit disk.

The uniqueness of the welding up to Moebius transformations is equivalent to the global uniqueness of the solution of the Beltrami equation. The information we have is not enough to draw this conclusion immediately. We have to apply a deep theorem of Jones, Smirnov(\cite{JS00}) /Koskela, Nieminen(\cite{KN05}) on conformally removable curves.  

A curve $\Gamma$ is called {\it conformally removable} if every global homeomorphism which is conformal off $\Gamma$ is automatically conformal on $\BBC$. If we knew the curves we contruct were conformally removable then we would automatically get the uniqueness of the welding, as any two weldings give rise to a global homeomorphism which is conformal off the curve $\Gamma$. 

Jones, Smirnov(\cite{JS00}) /Koskela, Nieminen(\cite{KN05}) gave sufficient conditions on the modulus of continuity on the Riemann mapping $f_+$  that ensure conformal removability. As long as the modulus of continuity of $f_+$ is better than $t\rightarrow e^{-C\sqrt{\log \frac{1}{t}}}$ (for a large constant $C$), then the curve is removable.

We stated in section \ref{SectionResults} that we can solve the welding problem with homeomorphism $\phi = h_1\circ h_2^{-1}$. The reader is encouraged to think about this as follows (in this way the proof will be exactly the same as for one GFF). For each $h_i$ consider extension $f_i$. For $h_1$ the extension is to the unit disk $\BBD$. For $h_2$ the extension is to the complement of the unit disk $\BBD_\infty$. Each extension $f_i$ has a Beltrami coefficient $\mu_i$. We seek a solution $F$ to the Beltrami equation with coefficient:

\begin{equation*}
\mu(z) =
\begin{cases}
 \mu_1(z) & \text{if } z\in \BBD,\\
\mu_2(z) & \text{if } z\in\BBD_\infty
\end{cases}
\end{equation*}

By the (local) uniqueness of the solutions to the Beltrami equation there is a conformal map $f_+:\BBD\rightarrow F(\BBD)$ such that $F = f_+\circ f_1$ on $\BBD$ and a conformal map $f_{-}:\BBD_\infty\rightarrow F(\BBD_\infty)$ such that $F = f_{-}\circ f_2$ on $\BBD_\infty$. Then we have $f_+\circ h_1 = f_{-}\circ h_2$ on $\partial\BBD$.

\subsection{The welding problem in our setting}
\label{SectionRandomConformalWelding}

In section \ref{SectionRandomMeasure} we construct a random measure and we prove, among other properties, that it is almost surely not zero on any interval. This allows us to define a random homeomorphism $\phi$ on $\BBT$. Our goal is to solve the conformal welding problem for this homeomorphism.

As espressed in the introduction the broad framework is the one from \cite{AJKS09}. Let $h(x) = \frac{\nu([0,x])}{\nu([0,1])}, x\in [0,1)$ where $\nu$ is the measure we construct in \ref{SectionRandomMeasure}. Extend $h$ periodically to $\BBR$ by setting $h(x+1) = h(x) + 1$. Extend $h$ to the upper half plane by setting (following Ahlfors-Beurling; see e.g. \cite{AIM09}) 
\begin{equation}
F(x+iy) = \frac{1}{2}\int_0^1 (h(x+ty)+h(x-ty))dt + i \int_0^1(h(x+ty) - h(x-ty))dt
\end{equation}
for $0<y<1$. This function equals $h$ on the real axis and it is a continuously differentiable homeomorphism. For $1\leq y\leq 2$ define $F(z) = z + (2-y)c_0$, where $c_0=\int_0^1 h(t)dt-1/2$. For $y>2$ define $F(z) = z$. We also have $F(z+k) = F(z)+k$.
On the unit circle we define the random homeomorphism: 
 $$
 \phi(e^{2\pi i x}) = e^{2\pi i h(x)}.
 $$
and the mapping:
 $$
 \Psi(z) = \exp(2\pi i F(\log z /2\pi i )), z\in \BBD
 $$
 is the extension of $\phi$ to the disk. The distortions of $F$ and $\Psi$ are related by 
$$
K(z, \Psi) =K (w, F), \ z = e^{2\pi i w}, w\in \BBR_+^2.
$$
 
In addition define
$$
\mu(z) = \frac{\partial_{\overline{z}}\Psi}{\partial_z \Psi}, z\in \BBD;\ \mu(z) = 0, z\notin\BBD.
$$
Our goal is to solve the Beltrami equation with this Beltrami coefficient.

We give now an upper bound for the distortion of $\mu$ that we will use. We also introduce necessary notation. 

Let $\mathcal{D}_n$ be the collection of dyadic intervals of size $2^{-n}$. For a dyadic interval $I$, let $j(I)$ be the union of $I$ and it's to neighbors of the same size. Set $C_I = \{(x,y)| x\in I, 2^{-n-1}\leq y \leq 2^{-n}\}$. Following \cite{AJKS09} let $\bold{J} = \{J_1,J_2\}$ and set
\begin{equation}
\delta_{\nu}(\bold{J}) = \frac{\nu(J_1)}{\nu(J_2)}+\frac{\nu(J_2)}{\nu(J_1)}
\end{equation}
In addition, define 
\begin{equation}
\mathcal{J}(I) = \{\bold{J} = (J_1,J_2): J_i\in\mathcal{D}_{n+5}, J_i\subset j(I)\}
\end{equation}
and
\begin{equation}
K_{\nu}(I) = \sum_{\bold{J}\subset \mathcal{J}(I)} \delta_{\nu}(\bold{J})
\end{equation}

The distortion of $\Psi$ is the same as the distortion of $F$ (the points are mapped appropriately). In the upper half of the square with base $I$(denoted by $C_I$) the distortion of $F$ is bounded by $C_0 K_\nu(I)$, for a universal constant $C_0$. As a consequence, studying the distortion of $\mu$ is really about studying the doubling properties of the random measure $\nu$ in $j(I)$. In the rest of this paper we will only use $K_\nu$.

\subsection{Main probabilistic estimate}\label{SectionMainProbabilisticEstimate}

We want to prove that almost surely we can find infinitely many annuli around each point on the unit circle which are not distorted much by a mapping with Beltrami coefficient $\mu$. We will need the following theorem:
\begin{theorem}
There are sequences $\rho_n,\tilde\rho_n, N_n, b_n, c_n$ such that:
\begin{equation}\label{MainProbEstimate}
P(\sum_{i=1}^{N_n}Mod(G(A(z,\tilde\rho_n\rho_n^{i},2\tilde\rho_n\rho_n^{i}))) < c_n N_n)\leq \tilde\rho_n\rho_n^{(1+b_n)N_n} 
\end{equation}
 for any $z\in \BBT$ and any mapping $G$ with Beltrami coefficient $\mu$.
\end{theorem}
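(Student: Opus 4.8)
The plan is to convert \eqref{MainProbEstimate} into a statement about the doubling of the random measure $\nu$, and then combine a single--scale tail bound with a decoupling of the scales and a large--deviation estimate.

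\textbf{Step 1: reduction to $K_\nu$.} Since $G$ and $\Psi$ differ by a conformal map, which preserves moduli, and $\mu\equiv 0$ off $\BBD$, an annulus $A(z,r,2r)$ centred at $z\in\BBT$ is mapped conformally outside $\BBD$ and, inside $\BBD$, with distortion at most $C_0K_\nu(I)$ on the Whitney squares $C_I$ of scale $\sim r$ that it meets. A serial--rule decomposition of the annulus into its conformal part and its distorted part, together with the quasi--invariance of the conformal modulus under $K$--quasiconformal maps (exactly as in \cite{AJKS09}), then gives
\begin{equation}\nonumber
Mod\big(G(A(z,\tilde\rho_n\rho_n^{\,i},2\tilde\rho_n\rho_n^{\,i}))\big)\ \ge\ \frac{c}{K_\nu(I_i)},
\end{equation}
where $I_i$ is a dyadic interval of length $\sim\tilde\rho_n\rho_n^{\,i}$ near $z$ (more precisely, $K_\nu(I_i)$ is to be read as the maximum of $K_\nu$ over the $O(1)$ such intervals the annulus meets). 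Hence, with $M_n:=c/(2c_n)$, the left side of \eqref{MainProbEstimate} is at most $P\big(\#\{i\le N_n:\ K_\nu(I_i)>M_n\}> N_n/2\big)$: if more than half the scales had $K_\nu(I_i)\le M_n$, then $\sum_i Mod(G(A))\ge \tfrac12 N_n\,(c/M_n)=c_n N_n$.

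\textbf{Step 2: single--scale tail bound.} Next I would prove $P(K_\nu(I)>M_n)\le p_n$ with $p_n$ small. Since $K_\nu(I)$ is a finite sum over pairs $(J_1,J_2)$ of dyadic intervals in $\mathcal{D}_{n+5}$ inside $j(I)$ of the ratios $\nu(J_1)/\nu(J_2)+\nu(J_2)/\nu(J_1)$, it suffices to combine an upper tail bound for $\nu(J)$ with a lower tail bound for $\nu(J)$, both of which belong to the distributional analysis of $\nu$ in Section~\ref{SectionRandomMeasure}. This is where the slow approach $t_n=t_c-k^{-\gamma}$ to criticality and the prescribed size of $n_k$ are used: at $t_c=2$ the positive moments of $\nu(J)$ degenerate, so the tails are controlled through the stopping--time / random--tree construction of Sections~\ref{SectionStoppingTime}--\ref{SectionModulusEstimate} rather than through a crude moment estimate, and the resulting $p_n$ carries the stated dependence on $\gamma$ and $\epsilon$.

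\textbf{Step 3: decoupling and the large--deviation bound.} The events $E_i=\{K_\nu(I_i)>M_n\}$ at consecutive scales are not independent, since the low--frequency vaguelets contribute to every small scale; but in the ratios $\nu(J_1)/\nu(J_2)$ the coarse contribution is a common multiplicative factor that cancels, so after choosing the scale ratio $\rho_n$ small enough to leave a wide gap between the scales $\tilde\rho_n\rho_n^{\,i}$, the vaguelet decoupling of the GFF (Section~\ref{SectionDecoupling}) lets one compare $\{E_i\}_{i\le N_n}$ with an independent family of events of probability $\lesssim p_n$, the comparison error being absorbed into $b_n$. A Chernoff / union bound then yields
\begin{equation}\nonumber
P\big(\#\{i\le N_n:\ K_\nu(I_i)>M_n\}> N_n/2\big)\ \lesssim\ \binom{N_n}{N_n/2}p_n^{N_n/2}\ \le\ (Cp_n)^{N_n/2},
\end{equation}
and it then remains to choose $\rho_n,\tilde\rho_n,N_n,c_n$ (hence $M_n$) and $b_n$ so that $(Cp_n)^{N_n/2}\le\tilde\rho_n\rho_n^{(1+b_n)N_n}$. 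Because, by Step~2, $p_n$ can be made an arbitrarily large power of $\rho_n$ while the decoupling costs only a fixed power of $\rho_n$ in the gap, this inequality can be arranged.

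\textbf{Main obstacle.} The delicate part is Step~2: producing a genuinely small tail $p_n$ for $K_\nu(I)$ that is uniform as $t_n\uparrow t_c$. Since the positive moments of the critical chaos blow up, one cannot simply Chebyshev a high moment; one must run the stopping--time argument on a random tree, bound its survival probability, and prove the modulus estimate of Section~\ref{SectionModulusEstimate}, all while keeping the quantitative dependence on $\gamma$ and $\epsilon$ compatible with the prescribed growth of $n_k$. Coordinating that dependence with the decoupling gap in $\rho_n$ and with $N_n$ so that \eqref{MainProbEstimate} finally closes is where essentially all of the work lies.
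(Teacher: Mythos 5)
Your Step 1 contains a genuine error that undercuts the whole reduction. You claim that inside $\BBD$ the annulus $A(z,\tilde\rho_n\rho_n^{i},2\tilde\rho_n\rho_n^{i})$ meets Whitney squares ``of scale $\sim r$'' and that the distortion there is controlled by $K_\nu(I_i)$ for a single dyadic interval $I_i$ of length $\sim\tilde\rho_n\rho_n^i$. That is not the geometry: the annulus is centered at a point $z\in\BBT$, so it straddles the circle and contains Whitney boxes $C_I$ of \emph{every} scale smaller than its width, down to scale zero. The distortion in those boxes is governed by $K_\nu(I)$ for ever-smaller $I$, and at criticality these quantities grow without bound as the scale shrinks; there is no $M_n$ for which ``$K_\nu(I_i)\le M_n$'' controls $Mod(G(A_i))$. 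In the paper the annulus is split into $R_{i,2}$ (the finitely many large Whitney boxes, handled via the decoupled ratios $L_{i,k}$) and $R_{i,1}$ (the infinite collection of small Whitney boxes accumulating at $\BBT$), and the entire point of Sections~\ref{SectionStoppingTime} and \ref{SectionModulusEstimate} is to handle $R_{i,1}$: since the distortion there cannot be uniformly bounded, one instead runs a stopping-time algorithm to find, inside $R_{i,1}$, a random $d$-ary surviving subtree along which the distortion $D_i$ grows in a controlled way, and then proves a \emph{deterministic} modulus estimate showing that such a subtree, together with bounded distortion in $R_{i,2}$, suffices to bound $Mod(G(A_i))$ below. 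You have the right ingredients in mind but have misassigned their roles: you treat the random-tree machinery as a way to get a small tail $p_n$ for a single-scale $K_\nu(I)$, whereas it is actually the mechanism that replaces your false inequality $Mod(G(A_i))\gtrsim 1/K_\nu(I_i)$ with a valid lower bound $Mod(G(A_i))\gtrsim D_n^{-3}$ on the good event.

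A secondary consequence of the same misreading is that your Step 3's large-deviation argument is built on the wrong family of events. In the paper the events for each $i$ are not ``$K_\nu(I_i)\le M_n$'' but a conjunction of seven conditions $\chi_1(i),\ldots,\chi_7(i)$: smallness of the cross-scale corrections $t_{i,k}$, $tr_{i,k}$, $s_{i,k,j}$ (Lemmas~\ref{DistribIneqt}, \ref{DistribIneqtrii}, \ref{DistribIneqtrik}, \ref{DistribIneqs}), smallness of the decoupled doubling sums $L_{i,k}$ (Lemma~\ref{DistribIneqL}), and existence of the surviving subtree. These are split into categories with different dependence structure, and the bound on the failure probability requires summing over which scale $k$ caused each failure, as in the $\chi_6$ and $\chi_4$ computations, rather than a clean Chernoff bound over approximately independent $E_i$'s. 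Your outline is thus not a variant route to the paper's estimate; it would need to be rebuilt from the paper's annulus decomposition and deterministic modulus lemma before the decoupling and counting could close.
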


We apply this theorem to a net of points on the unit circle and use the Borel-Cantelli theorem to get the desired statement that almost surely around every point on the unit circle there are infinitely many annuli which are not distorted by much. 

This result replaces the Lehto estimate (theorem 4.1) from Astala, Jones, Kupiainen, Saksman(\cite{AJKS09}). While their estimate covered scales one to $\rho^n$, this estimate deals with the scales in chunks. 

The theorem is a statement about distortion. 
Ideally the distortion in one scale would be independent of the distortion in another scale. However, this is not the case, each scale being correlated with every other scale. Fortunately, the correlations decay exponentially.

The setting here is more complicated than the one in \cite{AJKS09}. They used a representation of the Gaussian Free Field in terms of white noise $W$ which had a simpler correlation structure.

\subsection{Solution of the random welding problem}
\begin{theorem}\label{SolutionRandomWeldingProblem}
Almost surely there exists a random homeomorphic $W_{loc}^{1,1}-$ solution $f:\BBC\rightarrow\BBC$ to the Beltrami equation $\partial_{\overline{z}}f  = \mu \partial_{z}f$, which satisfies $f(z) = z+o(1)$ as $z\rightarrow \infty$ and whose restriction to $\BBT$ has the modulus of continuity $\omega(t)\leq e^{-(\log\frac{1}{t})^{1-2\epsilon}}$.
 \end{theorem}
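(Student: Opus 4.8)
The plan is to obtain $f$ as a limit of quasiconformal regularizations and to extract both its existence and its modulus of continuity from the modulus form of Lehto's criterion discussed above: if around every point one can exhibit a nested family of round annuli, in series, whose total image modulus under the regularizations diverges uniformly in the regularization parameter, then the limiting degenerate Beltrami equation has a homeomorphic $W^{1,1}_{loc}$ solution, and the rate at which the partial sums diverge bounds the modulus of continuity of that solution. For $\eta>0$ let $f_\eta$ be the normalized quasiconformal homeomorphism solving $\partial_{\overline z}f_\eta=\mu_\eta\,\partial_z f_\eta$ with $\mu_\eta=\mu\cdot\mathbf{1}_{\{|\mu|\le 1-\eta\}}$; since $\mu$ is compactly supported in $\overline{\BBD}$, the $f_\eta$ are conformal off $\overline{\BBD}$ and form a normalized family. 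We will also use that $K$ is a.s.\ locally integrable on $\BBC$: it is bounded on compact subsets of $\BBD$, and on each $C_I$ one has $K\le C_0K_\nu(I)$, whose aggregate is controlled by the doubling properties of $\nu$ proved in Section \ref{SectionRandomMeasure}; this is what guarantees the limit is a genuine $W^{1,1}_{loc}$ solution rather than merely a homeomorphism.

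\textbf{Good annuli, uniformly in $\eta$.} Fix $n$ and a maximal $\tilde\rho_n\rho_n^{N_n}$-separated set $\mathcal N_n\subset\BBT$, so $\#\mathcal N_n\lesssim(\tilde\rho_n\rho_n^{N_n})^{-1}$. Applying \eqref{MainProbEstimate} (which, being a lower bound on image moduli in terms of the distortion $K_\nu$, applies equally to the $f_\eta$, whose distortion is dominated by $K$) at each $z_0\in\mathcal N_n$ and taking a union bound, the probability that $\sum_{i=1}^{N_n}\operatorname{Mod}\!\big(f_\eta(A(z_0,\tilde\rho_n\rho_n^i,2\tilde\rho_n\rho_n^i))\big)<c_nN_n$ for some $z_0\in\mathcal N_n$ is at most $(\tilde\rho_n\rho_n^{N_n})^{-1}\cdot\tilde\rho_n\rho_n^{(1+b_n)N_n}=\rho_n^{b_nN_n}$, which by the choice of the parameters is summable in $n$. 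By Borel--Cantelli there is a.s.\ a random $n_0$ such that, for all $n\ge n_0$, all $z_0\in\mathcal N_n$ and all $\eta>0$, one has $\sum_{i=1}^{N_n}\operatorname{Mod}\!\big(f_\eta(A(z_0,\tilde\rho_n\rho_n^i,2\tilde\rho_n\rho_n^i))\big)\ge c_nN_n$. A brief geometric comparison upgrades this to every $z\in\BBT$: such a $z$ lies within $\tilde\rho_n\rho_n^{N_n}$ of some $z_0\in\mathcal N_n$, and when $\tilde\rho_n\rho_n^i$ is comfortably above $\tilde\rho_n\rho_n^{N_n}$ the round annulus $A(z,\tilde\rho_n\rho_n^i,2\tilde\rho_n\rho_n^i)$ contains a slightly thinner round annulus centred at $z_0$, on which the modulus loses only a fixed factor that can be absorbed into $c_n$. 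Since $\rho_n<1/2$ the $N_n$ annuli in block $n$ are disjoint and in series, and if the blocks of scales $[\tilde\rho_n\rho_n^{N_n},2\tilde\rho_n\rho_n]$ are chosen nested with $\tilde\rho_n\rho_n^{N_n}\downarrow 0$, we conclude: a.s., around every $z\in\BBT$ there is a nested family of round annuli, in series, whose total image modulus under every $f_\eta$ is at least $\sum_{n\ge n_0}c_nN_n$.

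\textbf{Existence, normalization, modulus of continuity.} Choosing the parameters so that $\sum_nc_nN_n=\infty$, the statement just obtained is exactly the hypothesis of the modulus form of Lehto's criterion; together with $K\in L^1_{loc}$ it produces, a.s., a homeomorphic $W^{1,1}_{loc}$ solution $f$ of $\partial_{\overline z}f=\mu\,\partial_z f$ as a locally uniform subsequential limit of the $f_\eta$. Since $\mu\equiv0$ near $\infty$, $f$ is conformal there with expansion $\alpha z+\beta+O(1/z)$, so replacing $f$ by $(f-\beta)/\alpha$ — a M\"obius post-composition, which leaves $\mu$ unchanged — normalizes it to $f(z)=z+o(1)$. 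For the modulus of continuity, let $z,w\in\BBT$ with $t:=|z-w|$ small, fix a constant $r_0$, and let $t'$ be the smallest block endpoint exceeding $t$ (so $t'\asymp t$). For $r\in(t,r_0)$ the bounded complementary component of $A(z,r,r_0)$ is $B(z,r)\ni z,w$, so $f(A(z,t',r_0))$ is a topological annulus separating $\{f(z),f(w)\}$ from $\infty$; decomposing $A(z,t',r_0)$ into the disjoint nested round annuli of the blocks contained in $[t',r_0]$ and using superadditivity of the modulus under the homeomorphism $f$ gives $\operatorname{Mod}(f(A(z,t',r_0)))\ge M(t):=\sum_{n\,:\,[\tilde\rho_n\rho_n^{N_n},2\tilde\rho_n\rho_n]\subset[t',r_0]}c_nN_n$. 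The standard estimate bounding the diameter of the bounded complementary component of an annulus in terms of its modulus then yields $|f(z)-f(w)|\le C\,e^{-2\pi M(t)}\,\operatorname{diam}(f(\overline{B(0,1+r_0)}))\le C'e^{-2\pi M(t)}$ almost surely. Finally, inserting the explicit parameter choices and the plateau lengths $n_k\sim(k+1)^\gamma e^{C(k+1)^{3\gamma(k+1)^\gamma}/\eps}$ shows $M(t)\gtrsim(\log\frac{1}{t})^{1-2\eps}$, which is the asserted modulus of continuity $\omega(t)\le e^{-(\log\frac{1}{t})^{1-2\eps}}$ for $f|_{\BBT}$.

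\textbf{The main obstacle.} Granting \eqref{MainProbEstimate} — whose proof, via the vaguelet decoupling of the GFF and the stopping-time and modulus arguments of Sections \ref{SectionDecoupling}--\ref{SectionModulusEstimate}, is the real core of the paper — the chief remaining difficulty is the quantitative bookkeeping of the last two paragraphs: choosing the blocks $[\tilde\rho_n\rho_n^{N_n},2\tilde\rho_n\rho_n]$ so that they nest and exhaust all small scales, keeping $\sum_n\rho_n^{b_nN_n}$ finite for Borel--Cantelli, and at the same time arranging that the partial sums $\sum_nc_nN_n$ both diverge (for existence) and accumulate at precisely the rate $(\log\frac{1}{t})^{1-2\eps}$ dictated by the plateau lengths $n_k$. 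The net-to-all-points transfer and the uniformity of all estimates in the regularization parameter $\eta$ are routine but must be carried through with constants that do not depend on $\eta$.
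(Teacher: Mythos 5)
Your proposal is correct and follows essentially the same route as the paper: approximate $\mu$ by uniformly elliptic coefficients, apply the main probabilistic estimate \eqref{MainProbEstimate} over a net of points plus Borel--Cantelli to get, a.s., a uniform lower bound on image moduli of nested annuli around each boundary point, then extract a homeomorphic $W^{1,1}_{loc}$ limit and read off the modulus of continuity from the accumulated moduli. The only notable differences are cosmetic: you regularize by truncating $\mu$ where $|\mu|>1-\eta$ rather than scaling $\mu_l=\frac{l}{l+1}\mu$; you use a coarser net of mesh $\tilde\rho_n\rho_n^{N_n}$ together with an explicit net-to-all-points comparison (losing $O(1)$ annuli per block), whereas the paper chooses the finer mesh $\tilde\rho_n\rho_n^{(1+b_n/2)N_n}$ so the estimate at the nearest net point already suffices; and you invoke the modulus form of Lehto's criterion as a black box for the existence and homeomorphism of the limit, whereas the paper runs the Arzel\`a--Ascoli argument directly on the families $\{f_l\}$ and $\{g_l=f_l^{-1}\}$ (the latter controlled via the $\log$-modulus-of-continuity estimate using $K\in L^1(\BBD)$, which is exactly the machinery behind the criterion you cite). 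All three substitutions are legitimate and do not introduce gaps.
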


 \begin{proof}
The proof is essentially the same as in \cite{AJKS09}. In this proof we use estimate (\ref{MainProbEstimate})

We start by considering for each $n$ an $[\tilde\rho_n^{-1}\rho_n^{-(1+b_n/2)N_n}]=:r_n-$ net of points on $[0,1]$ and denote $\zeta_{n,k} = \exp(2\pi i k/r_n)$ for $k\in \{1,\ldots, r_n\}$.  Set also $G_n = \{\zeta_{n,1},\ldots, \zeta_{n,r_n}\}$. Any other point on $\BBT$ is at distance at most $\sim \tilde\rho_n \rho_n^{(1+b_n/2)N_n}$ from $G_n$.
Define the event 
$$
A_{n,k}:=\{Mod(F(A(\zeta_{n,k},\tilde\rho_n \rho_n^{(1+b_n/2)N_n},\tilde\rho_n))) < c_nN_n\}
$$
Now set $A_n = \cap_k A_{n,k}$. Since
$$
\sum_{n=1}^{\infty} P(A_n) \leq \sum_n \sum_{k=1}^{r_n}P(A_{n,k})\leq \sum_n r_n \tilde\rho_{n}\rho_n^{(1+b_n)N_n} \leq \sum_n \rho_n^{N_n b_n/2}\leq \sum_n \frac{1}{2^{N_n/2}}<\infty.
$$
Borel-Cantelli tells us that almost every realization $\omega$ is in the complement of $\cup_{n>n_0(\omega) A_n}$. 

Consider the approximations $\mu_l= \frac{l}{l+1}\mu$ to $\mu$. For each $l$ denote by $f_l$ the normalized (random) solution of the Beltrami equation with coefficient $\mu_l$ and such that $f_l(z) = z+ o(1)$ as $z\rightarrow \infty$. In other words $f_l$ is a quasiconformal homeomorphism of $\BBC$. This solution is obtained by means of the measurable Riemann mapping theorem (see \cite{A06} )

We want to prove that almost surely the family $\{f_l\}$ is equicontinuous. Outside $\BBD$ all these mappings are conformal and equicontinuity follows from Koebe's theorem. Equicontinuity inside $\BBD$ follows from the fact that at any point inside the disk the distortion is determined by the measure $\nu$ on finitely many intervals and hence it is bounded. 

To prove equicontinuity on $\BBT$ we consider the functions $F_l(z) = f_l (e^{2\pi i z})$

For $P$-a.e. $\omega$  we have 
$$
Mod( F_l(A(\zeta_{n,k},\tilde\rho_{n}\rho_n^{(1+b_n/2)N_n},1))) > \sum_{i>i_0(\omega)}^n c_iN_i, \ \forall l.
$$

Lemma 2.3 in \cite{AJKS09} gives: 
$$
\frac{\mbox{diam}(F(B(\zeta,R)))}{\mbox{diam}(F(B(\zeta,r)))}\geq \frac{1}{16}\exp(\pi Mod(F(A(\zeta, r, R))))
$$ 
for any $F$ quasiconformal. 

Fix one realization $\omega$. Putting together the last two inequalities ($F_l$ is quasiconformal for any $l$).
\begin{eqnarray*}
\mbox{diam}( F_l(B(\zeta_{n,k}, \tilde\rho_{n}\rho_n^{(1+b_n/2)N_n}))) \leq 16\mbox{diam}( F_l(B(\zeta_{n,k},1))) e^{-\sum_{i=1}^n c_iN_i}e^{\sum_{i=1}^{i_0(\omega)}k_0(\omega) c_i N_i}
\end{eqnarray*}
which gives us the equicontinuity. 

Arzela-Ascoli now gives us a subsequence of $\{f_l\}$ which converges uniformly on compact sets to a function $f:\BBC\rightarrow \BBC$. 
We now show that this sequence can be picked such that $f$ is actually a homeomorphism. To this end consider the inverse functions $g_l = f_l^{-1}$. These functions satisfy the estimate: 
$$
|g_l(z) - g_l(w)|\leq 16 \pi^2 \frac{|z|^2+|w|^2+\int_{\BBD}\frac{1+|\mu_l(\zeta)|}{1-|\mu_l(\zeta)|}dA(\zeta)}{\log(e+\frac{1}{|z-w|})}
$$
Since $\frac{1+|\mu_l(\zeta)|}{1-|\mu_l(\zeta)|} \leq K(\zeta)$ and $K\in L^1(\BBD)$ (see Lemma \ref{IntegrabilityOfDistortion}) almost surely we immediately have that the sequence $\{g_l\}$ is equicontinuous. In addition, the integrability of distortion leads to the conclusion that $f\in W^{1,1}_{loc}$ (for a proof of this last fact see \cite{AIM09}, theorem 20.9.4).

The modulus of continuity is given by the relation between $\tilde\rho_n\rho_n^{(1+b_n)N_n}$ and $e^{-\sum_{i=1}^n c_iN_i}$.


\end{proof}

\section{Vaguelets}\label{SectionVaguelets}

\subsection{Construction}

Consider a wavelet basis $\{\Phi_{j,l}\}$ of $L^2(\BBR)$ with mother wavelet $\Phi:\BBR\rightarrow \BBR$.
Following Donoho (\cite{D95}) set 
\begin{equation*}
\phi(t) := \frac{1}{2\pi} \int_\BBR e^{it\omega} \widehat{\Phi}(\omega)|\omega|^{-1/2}d\omega
\end{equation*}
The vaguelet $\phi$ is the half integral of $\Phi$ and satisfies the following properties (we can choose $q$ by choosing a suitable decay for $\Phi$): 
\begin{eqnarray*}
|\phi(t)| \leq C_1(1+|t|)^{-(q+1)},\ t\in \BBR, \\
\int \phi(t) dt = 0 \\
|\phi(t)-\phi(s)| \leq C_2 |t-s|
\end{eqnarray*}
Following Y. Meyer (\cite{M90}) consider the periodized functions:
\begin{eqnarray*}
\Psi_j(\theta) := 2^{j/2}\sum_{-\infty}^{\infty} \Phi(2^j(\theta-k))
\end{eqnarray*}
The functions $\{1\}\cup\{\Psi_j(\theta-k2^{-j})\}_{j, 0\leq k< 2^j}$ form a periodic orthonormal wavelet basis of $L^2(\BBT)$.

We now introduce the periodic vaguelet. For a function on the torus with Fourier series $f(\theta)\sim \sum \widehat{f}(n) e^{2\pi i n \theta}$ we have $(-\Delta) f\sim \sum (2\pi n)^2  \widehat{f}(n) e^{2\pi i n \theta}$ so we may define the operator $(-\Delta)^{-1/4}$ by \begin{eqnarray*}
\widehat{(-\Delta)^{-1/4} f} (n) := \frac{1}{\sqrt{2\pi |n|}} \widehat{f}(n)
\end{eqnarray*}
Define now the periodic vaguelet $\psi (\theta)$ by 
\begin{equation*}
\widehat{\psi}(n) := \frac{1}{\sqrt{2\pi |n|}} \widehat{\Psi}(n)
\end{equation*}
where $\Psi$ is the periodic wavelet. 
Define 
\begin{equation}
\widehat{\psi}_{j,l} (n) := \frac{1}{\sqrt{2\pi |n|}} \widehat{\Psi_j(\cdot - l2^{-j})}(n) 
\end{equation}
The vaguelets $\psi_{j,l}$ are periodized versions of the $\phi_{j,l}$ and have essentially the same properties. While the wavelets are a basis for $L^2(\BBT)$, the vaguelets are a basis for the space $H_0^{1/2}(\BBT)$ of functions $f$ which have mean zero, and half of a derivative in $L^2(\BBT)$. In other words, $f\sim \sum_{n\neq 0} a_n e^{2\pi i n \theta}$ and the norm on the space is:
\begin{equation*}
||\sum_{n\neq 0} a_n e^{2\pi i n \theta}||_{H_0^{1/2}}=\left(\sum_{n\neq 0}2\pi n |a_n|^2\right)^{1/2}.
\end{equation*}

\subsection{Properties}

In this section we present some properties of the vaguelets that will prove useful later. We start with:

\begin{eqnarray*}
||\psi_{j,l}||_{L^2(S^1)}^2 &=& \sum_{n\neq 0}\frac{1}{2\pi |n|} |\widehat{\Phi}_{j,l}(n)|^2 \\
\int_0^1 |\psi_{j,l}(\theta)| d\theta &\leq& C 2^{-j}
\end{eqnarray*}

We give a few estimates which we will need later:
\begin{lemma}\label{DecayOfVaguelet}
For any $q>0$ and any $\theta\in \tilde{J}$ with $|\tilde{J}|=2^{-j}$ and $J=[2^{-j}l,2^{-j}(l+1)]\neq \tilde{J}$
\begin{eqnarray}
\left|\psi_J(\theta)\right|\leq \frac{C_q}{(1+|2^j\theta-l|)^{q}}\\
\left|\psi_J'(\theta)\right|\leq \frac{C_q2^j}{(1+|2^j\theta-l|)^{q-1}}
\end{eqnarray}
The expression $|2^j\theta-l|$ should be understood modulo $2^j$ and can be replaced by $\frac{dist(J,\tilde{J})}{2^{-j}}$.
\end{lemma}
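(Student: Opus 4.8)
The plan is to reduce the estimate on the periodic vaguelet $\psi_J$ to the corresponding decay estimate on the non-periodic vaguelet $\phi$, which was recorded in the previous display as $|\phi(t)|\le C_1(1+|t|)^{-(q+1)}$ and $|\phi(t)-\phi(s)|\le C_2|t-s|$. The key observation is that, just as $\Psi_j$ is the periodization of the dilated mother wavelet $2^{j/2}\Phi(2^j\cdot)$, the periodic vaguelet $\psi_{j,l}$ obtained by dividing Fourier coefficients by $\sqrt{2\pi|n|}$ is (up to an additive constant killed by the mean-zero property) the periodization of the dilated half-integral $2^{j/2}\phi(2^j(\cdot-l2^{-j}))$. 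I would first verify this identification at the level of Fourier coefficients: $\widehat{\psi}_{j,l}(n)=(2\pi|n|)^{-1/2}\widehat{\Psi_j(\cdot-l2^{-j})}(n)$, and the Fourier coefficients of a periodization are samples of the Fourier transform of the un-periodized function, so one matches $(2\pi|n|)^{-1/2}\widehat{2^{j/2}\Phi(2^j(\cdot-l2^{-j}))}(n)$ against $\widehat{2^{j/2}\phi(2^j(\cdot-l2^{-j}))}(n)$ using that $\widehat\phi(\omega)=|\omega|^{-1/2}\widehat\Phi(\omega)$ on $\R$ and that dilation by $2^j$ scales the frequency variable accordingly. Care is needed at $n=0$, but $\widehat{\Psi_j}(0)=0$ and $\int\phi=0$, so both sides have vanishing zeroth coefficient and the identification is clean.

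Granting this, I would write
\[
\psi_{j,l}(\theta)=2^{j/2}\sum_{k\in\Z}\phi\bigl(2^j(\theta-l2^{-j}-k)\bigr)=2^{j/2}\sum_{k\in\Z}\phi\bigl(2^j\theta-l-2^jk\bigr),
\]
and then estimate the sum term by term using the polynomial decay of $\phi$. Fix $\theta\in\tilde J$ with $\tilde J\neq J$. The distance $\mathrm{dist}(J,\tilde J)$ is comparable to $2^{-j}|2^j\theta-l|$ reduced modulo $2^j$; write $m:=2^j\theta-l-2^jk_0$ for the representative of smallest absolute value, so $|m|\asymp 2^j\,\mathrm{dist}(J,\tilde J)=|2^j\theta-l|$ in the sense meant in the statement, and $|m|\ge c>0$ since $\tilde J\neq J$ forces the two intervals to be separated. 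Then
\[
|\psi_{j,l}(\theta)|\le 2^{j/2}\,C_1\sum_{k\in\Z}\bigl(1+|m+2^j(k_0-k)|\bigr)^{-(q+1)}.
\]
The $k=k_0$ term contributes $2^{j/2}C_1(1+|m|)^{-(q+1)}$; the remaining terms are spaced $2^j\ge 1$ apart, so their sum is bounded by a constant (depending on $q$) times $2^{j/2}(1+|m|)^{-(q+1)}$, which is still $\le C_q 2^{j/2}(1+|m|)^{-q}$. One then absorbs the factor $2^{j/2}$: in the normalization the authors are using $\psi_J=\psi_{j,l}$ up to the $L^2$-normalizing constant, and the stated bound $|\psi_J(\theta)|\le C_q(1+|2^j\theta-l|)^{-q}$ is exactly this after comparing normalizations (the $2^{j/2}$ is the $L^2$ factor, and the one lost power of $(1+|m|)$ is what pays for summing the tails). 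For the derivative bound, I would differentiate the periodized series, picking up a factor $2^j$ from the chain rule, and use the same decay of $\phi$ — here the Lipschitz/decay hypotheses on $\phi$ give $|\phi'|\lesssim (1+|t|)^{-q}$ after possibly relabeling $q$ (choosing $\Phi$ with enough decay makes $\phi'$ decay as fast as we like), yielding $|\psi_J'(\theta)|\le C_q 2^j(1+|2^j\theta-l|)^{-(q-1)}$, the lost power again paying for the tail summation.

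The main obstacle I anticipate is not any single estimate but the bookkeeping around three separate normalization/identification issues: (i) correctly matching the Fourier-coefficient definition of $\psi_{j,l}$ with a spatial-side periodization of the half-integrated, dilated, translated $\phi$ — in particular tracking how the $(2\pi|n|)^{-1/2}$ factor interacts with the $2^j$ dilation, since $\phi$ is defined via $|\omega|^{-1/2}$ and dilation rescales $\omega$; (ii) the passage between the normalized vaguelet $\psi_J$ appearing in the lemma and the basis element $\psi_{j,l}$ that carries the $2^{j/2}$, so that the final constants are genuinely $j$-independent; and (iii) justifying that the choice of $q$ is free — i.e.\ that by taking $\Phi$ smooth with rapid decay one gets $\phi$ (hence $\psi_J$) with decay of any prescribed polynomial order, and similarly for $\phi'$. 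Once those are pinned down, the term-by-term summation against a geometric-type spacing of $2^j$ is routine. I would also remark that the ``modulo $2^j$'' caveat in the statement is precisely the statement that only the nearest periodic copy matters, which is what the tail estimate above quantifies.
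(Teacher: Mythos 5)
Your overall approach matches the paper's: write $\psi_{j,l}$ as a periodization of the non-periodic vaguelet and sum the polynomial tails, with the $2^j$-spacing of the periods making the nearest term dominate. The paper's proof is exactly this (and also asserts, without explicit verification, that $|\psi_J(\theta)|\le \sum_k|\phi(2^j\theta-l+2^jk)|$), so you are on the right track.

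However, there is a genuine normalization error in your periodization formula, and your attempted patch does not fix it. You write $\psi_{j,l}(\theta)=2^{j/2}\sum_k\phi(2^j\theta-l-2^jk)$, carrying over the wavelet's $L^2$-normalizing factor $2^{j/2}$. This is wrong: the correct identity has no $2^{j/2}$. The point is that the operator $(-\Delta)^{-1/4}$ is not dilation-invariant — it rescales. Concretely, if $\phi_{j,l}$ denotes the half-integral of $\Phi_{j,l}=2^{j/2}\Phi(2^j\cdot-l)$ on the line, then
\[
\widehat{\phi_{j,l}}(\omega)=|\omega|^{-1/2}\,2^{-j/2}e^{-i\omega l2^{-j}}\widehat\Phi(2^{-j}\omega)
= 2^{-j}e^{-i\omega l2^{-j}}\bigl(|2^{-j}\omega|^{-1/2}\widehat\Phi(2^{-j}\omega)\bigr)
=\widehat{\phi(2^j\cdot-l)}(\omega),
\]
so $\phi_{j,l}(t)=\phi(2^jt-l)$ with \emph{no} prefactor: the $2^{j/2}$ from the wavelet normalization is exactly cancelled by the $2^{j/2}=|2^{-j}\omega|^{-1/2}/|\omega|^{-1/2}$ produced by half-integration under dilation. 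This is the same mechanism that makes the vaguelets uniformly bounded in $L^\infty$ (consistent with the paper's later use of $|\psi_{I}|\le c$ and $\int|\psi_{j,l}|\lesssim 2^{-j}$). Your proposed rescue — that "the one lost power of $(1+|m|)$ is what pays for" the $2^{j/2}$ — cannot be correct: dropping from exponent $q+1$ to $q$ absorbs the $j$-\emph{independent} constant from summing over $k$ (terms at spacing $2^j\ge 1$), but it cannot absorb a $j$-\emph{dependent} factor $2^{j/2}$, and if $\psi_J$ really grew like $2^{j/2}$ in $L^\infty$ the lemma would simply be false for $\theta$ near $J$. You also silently drop the $2^{j/2}$ when you do the derivative, which is inconsistent with your own $\psi_J$ formula but happens to give the right answer. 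Once you delete the spurious $2^{j/2}$ from the periodization identity, your term-by-term estimate and tail summation go through exactly as written, and the argument coincides with the paper's.
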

\begin{remark} 
The quantity $\frac{dist(J,\tilde{J})}{2^{-j}}$ is the number of intervals of size $2^{-j}$ that separate $J$ and $\tilde{J}$ on the torus.
\end{remark}
\begin{proof}
The proof is a simple computation using the decay of the vaguelet $\phi$. 
\begin{equation}
|\psi_J(\theta)|\leq \sum_k |\phi(2^j \theta - l + 2^j k)|\leq \sum_k \frac{C}{(1+|2^j \theta -l +2^j k|)^{q+1}}.
\end{equation}
We may describe $\tilde{J}$ as the interval $[2^{-j} \tilde{l}, 2^{-j} (\tilde{l}+1)]$. Then $2^j \theta -l \in[ \tilde{l} - l, \tilde{l}+1 - l]$. $|l-\tilde{l}|$ equals the number of dyadic intervals (modulo $2^j$) of length $2^{-j}$ separating $J$ and $\tilde{J}$ on the unit torus (quantity denoted by  $\frac{dist(J,\tilde{J})}{2^{-j}}$).  This gives also the largest term in the series above. All the other terms decrease very fast and their sum is dominated by the first.  

An identical argument works for the derivative $\psi_J(\theta)$.
\end{proof}

We also have the following:
\begin{lemma}\label{VagueletLemma}
For the family of vaguelets defined above the following relations hold: there is a constant $C_0$ such that for all dyadic $I$, all $m$
\begin{eqnarray}
\left|\sum_{|J|\geq 2^{-m}}\psi_J^2(\theta) - \frac{(m+1)\ln 2}{\pi}\right|\leq C_0 \ \forall  \theta \label{VagueletInequality}\\ 
\left|\sum_{|J|\leq 2^{-m}, J\not\subset 3I}\psi_J^2(\theta) \right|\leq C_0 \ \forall \theta\in I
\end{eqnarray}
where $3I$ is the interval formed by the dyadic interval $I$ and its left and right neighbors of the same size.
\end{lemma}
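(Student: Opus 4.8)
The plan is to reduce both inequalities to pointwise estimates on the vaguelet kernel $\sum_J \psi_J^2(\theta)$ organized scale by scale, using the orthonormality of the wavelets together with the decay estimates from Lemma~\ref{DecayOfVaguelet}. First I would fix a scale $j$ and study the single-scale sum $S_j(\theta) := \sum_{|J| = 2^{-j}} \psi_J^2(\theta) = \sum_{0\le l < 2^j} \psi_{j,l}^2(\theta)$. The key observation is that $S_j(\theta)$ should be essentially constant in $\theta$: by translation structure the wavelets $\Psi_j(\cdot - l2^{-j})$ are translates of one another, so $\sum_l \Psi_j(\theta - l2^{-j})^2$ is $2^{-j}$-periodic, and the vaguelet version inherits an approximate version of this. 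I would compute $\int_0^1 S_j(\theta)\,d\theta = \sum_l \|\psi_{j,l}\|_{L^2}^2$, which by the displayed formula $\|\psi_{j,l}\|_{L^2}^2 = \sum_{n\neq 0}\frac{1}{2\pi|n|}|\widehat{\Phi}_{j,l}(n)|^2$ and Parseval reduces to a sum over the (dyadic-block) frequency support of $\Psi_j$. Since $\Psi_j$ has frequency content concentrated in $|n| \sim 2^j$, the factor $\frac{1}{2\pi|n|} \sim \frac{1}{2\pi 2^j}$ pulls out, and one gets $\int_0^1 S_j \approx \frac{1}{2\pi 2^j}\sum_l \|\Phi_{j,l}\|_{L^2}^2 = \frac{2^j}{2\pi 2^j}= \frac{1}{2\pi}$; accounting for the logarithmic width of the dyadic frequency block and the precise normalization should produce the constant $\frac{\ln 2}{\pi}$ per scale. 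Then I would show $S_j(\theta) = \int_0^1 S_j + O(2^{-j})$ uniformly, or at least that $\sum_j (S_j(\theta) - \text{avg}_j)$ converges with bounded partial sums, which is where the decay estimate $|\psi_J(\theta)| \le C_q(1+\mathrm{dist}(J,\tilde J)/2^{-j})^{-q}$ enters: the difference between $S_j$ at two points is controlled by how the tails of nearby vaguelets vary, and summing the $q$-th power decay over all shifts gives a bound uniform in $j$.

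For the first inequality, once each scale contributes $\frac{\ln 2}{\pi} + O(\text{something summable})$, summing over $|J| \ge 2^{-m}$, i.e. $j = 0, 1, \dots, m$, gives $\frac{(m+1)\ln 2}{\pi} + O(1)$, with the $O(1)$ error coming from (a) the approximate-constancy errors $S_j - \mathrm{avg}_j$, which I need to sum to a bounded quantity, and (b) the deviation of $\int_0^1 S_j$ from exactly $\frac{\ln 2}{\pi}$, which again must be summable in $j$. The second inequality is a localization statement: for $\theta \in I$ with $|I| = 2^{-m}$ and $|J| = 2^{-j} \le 2^{-m}$ with $J \not\subset 3I$, the interval $\tilde J \ni \theta$ at scale $j$ satisfies $\mathrm{dist}(J, \tilde J) \gtrsim 2^{-m} \ge 2^{-j}$ roughly, so $|2^j\theta - l| \gtrsim 2^{j-m}$; Lemma~\ref{DecayOfVaguelet} then gives $|\psi_J(\theta)|^2 \le C_q(1 + \mathrm{dist}(J,\tilde J)/2^{-j})^{-2q}$, and summing over the $O(2^{j-m})$ choices of such $J$ at each scale $j \ge m$ and then over $j$ yields a geometric/summable bound, provided $q$ is chosen $> 1$ (which is allowed, by the remark about choosing the decay of $\Phi$).

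The main obstacle I expect is the first estimate, specifically controlling $\sum_{j=0}^m (S_j(\theta) - \mathrm{avg}_j)$ uniformly in $m$ and $\theta$: individually each term is $O(2^{-j})$ which is summable, but one has to be careful that the implied constants do not grow with $j$, and that the normalization constant genuinely comes out to $\frac{\ln 2}{\pi}$ rather than some other multiple — this requires pinning down exactly how the dyadic frequency block of $\Psi_j$ sits inside $\{|n| \sim 2^j\}$ and how $\frac{1}{2\pi|n|}$ averages over it (the $\ln 2$ is the signature of $\int \frac{dn}{n}$ over a dyadic block). A secondary technical point is that $\Psi_j$ is a periodization, so its Fourier coefficients are a discrete subset and one must make sure the low-frequency terms ($j = 0$, near $|n| = 1$) do not cause trouble — but these contribute only a fixed $O(1)$ and can be absorbed into $C_0$.
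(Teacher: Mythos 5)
Your handling of the second inequality (localization via spatial decay of the vaguelets, summing the polynomial tails over the $O(2^{j-m})$ far intervals at each scale $j\ge m$) is correct and matches the paper. The first inequality, however, has two genuine gaps, one of which is fatal to your proposed route.

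The more serious gap is in the pointwise part. You propose to control the oscillation of $S_j(\theta)=\sum_l\psi_{j,l}^2(\theta)$ using Lemma~\ref{DecayOfVaguelet}, and assert $S_j(\theta)=\mathrm{avg}_j+O(2^{-j})$. This is off by a factor of $2^j$. Indeed $S_j$ is $2^{-j}$-periodic (the vaguelets $\psi_{j,l}$ are translates of $\psi_{j,0}$), so it suffices to control the oscillation over a window of length $2^{-j}$; but the derivative bound you get from the decay estimates is $|S_j'(\theta)|\le 2\sum_l|\psi_{j,l}(\theta)||\psi_{j,l}'(\theta)|\lesssim 2^j\sum_l(1+|2^j\theta-l|)^{-(2q-1)}\lesssim 2^j$, which yields oscillation $O(2^j\cdot 2^{-j})=O(1)$ per scale, not $O(2^{-j})$. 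Summing $O(1)$ over $j=0,\dots,m$ produces $O(m)$, which does not give the fixed $C_0$ the lemma requires. The paper avoids this by working on the Fourier side: expanding $\psi_{j,l}^2(\theta)$ as a Fourier series and summing over $l$, the geometric sum $\sum_{l=0}^{2^j-1}e^{-2\pi inl2^{-j}}$ annihilates every frequency $n$ that is not a multiple of $2^j$; what survives is the $n=0$ coefficient (the average) plus contributions from $|n|\ge 2^j$ that live in the far tail of $\widehat\Phi$. This cancellation mechanism is invisible to your spatial $L^\infty$ estimates and is the essential ingredient.

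The second gap is the constant itself. You observe (correctly, as heuristic) that $\ln 2$ is ``the signature of $\int d\xi/\xi$ over a dyadic block,'' but $\widehat\Phi$ is not supported on exactly one dyadic block, so this does not determine the constant. The paper proves the exact identity $\frac{1}{2\pi}\int|\widehat\Phi(\xi)|^2|\xi|^{-1}d\xi=\frac{\ln 2}{\pi}$ by invoking the multiresolution structure: writing $\widehat\Phi(2\xi)=e^{-i\xi}\overline{m_0}(\xi+\pi)\widehat f(\xi)$, using the filter identity $|m_0(\xi)|^2+|m_0(\xi+\pi)|^2=1$ and $\widehat f(2\xi)=m_0(\xi)\widehat f(\xi)$ to telescope $\int_{2^j}^{2^{j+1}}|\widehat f|^2\xi^{-1}d\xi$, and finally $\widehat f(0)=1$. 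Without this telescoping identity, the value of the constant is unknown, and the $(m+1)\ln 2/\pi$ in the lemma statement could not be pinned down.
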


\begin{proof}
We recall that $\psi_J$ are periodic vaguelets (defined on $[0,1]$). We will first prove that 
\begin{eqnarray}
\left|\int_0^1\sum_{|J|\geq 2^{-m}}\psi_J^2(\theta) d\theta- \frac{(m+1)\ln 2}{\pi}\right|\leq C_0 \\ 
\end{eqnarray}
and then deal with the pointwise estimate. We will prove this inequality by reducing the computation to the wavelets $\Phi_J$ on the line.

In the following we will replace the notation $\psi_J$ by $\psi_{j,l}$ where $J=[2^{-j}l,2^{-j}(l+1)]$. The periodic vaguelet $\psi_{j,l}$ was defined by:
\begin{equation}
\widehat{\psi}_{j,l} (n) := \frac{1}{\sqrt{2\pi |n|}} \widehat{\Psi_j(\cdot - l2^{-j})}(n) 
\end{equation}
where $\Psi$ was the periodic wavelet and $\Psi_j$ its refinement to level $2^{-j}$. We also have
\begin{equation}
 \widehat{\Psi_j(\cdot - l2^{-j})}(n)  = 2^{-j/2} e^{-2\pi i n l 2^{-j}} \widehat{\Phi}(2^{-j}n)
\end{equation}
where $\Phi$ is the mother wavelet on $\BBR$.

We then have
\begin{eqnarray}
\sum_{j=0}^m\sum_{l=0}^{2^j-1}\int_0^1\phi_{j,l}^2(\theta)d\theta = \sum_{j=0}^m\sum_{l=0}^{2^j-1}\sum_{n\neq 0}\frac{2^{-j}}{2\pi |n|}|\hat\Phi(n2^{-j})|^2= \sum_{j=0}^m\sum_{n\neq 0}\frac{1}{2\pi |n|}|\hat\Phi(n2^{-j})|^2
\end{eqnarray}

We remark that for $m$ large enough:
\begin{equation}
\sum_{n\neq 0}\frac{1}{2\pi |n|}|\hat\Phi(n2^{-m})|^2 = \sum_{n\neq 0}\frac{1}{2\pi |n|2^{-m}}|\Phi(n2^{-m})|^22^{-m}\approx \frac{1}{2\pi}\int \frac{|\hat\Phi(\xi)|^2}{|\xi|}d\xi
\end{equation}
By $\approx$ we mean equal up to a small error (and all such errors add up to at most a constant).
We will prove that 
\begin{equation}\label{ln2identity}
\frac{1}{2\pi}\int \frac{|\hat\Phi(\xi)|^2}{|\xi|}d\xi = \frac{\ln2}{\pi}
\end{equation}
This fact follows from the construction of the mother wavelet $\Phi$. We recall the construction procedure (see \cite{M90}, chapter 3, or \cite{BNB00}, chapter 7).

One considers a function $m_0(\xi)$ (also called filter) with the following properties:
\begin{itemize}
\item $m_0$ is continuous and $2\pi$ periodic.
\item $|m_0(\xi)|^2 + |m_0(\xi+\pi)|^2=1$.
\item $m_0(0) = 1$ and $m_0(\xi)\neq 0$ on $[-\frac{\pi}{2},\frac{\pi}{2}]$.
\end{itemize}
For any such filter one considers the father wavelet (aka scaling function) $f$ given by 
\begin{equation}
\hat f(\xi) := \prod_{j\in\BBN}m_0(\frac{\xi}{2^j})\label{DefinitionFatherWavelet}
\end{equation} 
and the mother wavelet will be given by the relation
\begin{equation}
\hat \Phi(2\xi) :=e^{-i\xi} \overline{m_0}(\xi +\pi) \hat f(\xi) 
\end{equation}
We may now proceed with our argument:
\begin{eqnarray}
\int \frac{|\hat\Phi(\xi)|^2}{|\xi|}d\xi = \int\frac{|m_0(\xi/2 +\pi)|^2|\hat  f(\xi/2)|^2}{|\xi|}d\xi = \int\frac{|m_0(\xi +\pi)|^2|\hat f(\xi)|^2}{|\xi|}d\xi
\end{eqnarray}

We make the following observation (using the second property of the filter $m_0$):
\begin{eqnarray}
\int_{2^j}^{2^{j+1}}\frac{|\hat f(\xi)|^2}{\xi}d\xi = \int_{2^j}^{2^{j+1}}\frac{|m_0(\xi+\pi)|^2|\hat f(\xi)|^2}{\xi}d\xi + \int_{2^j}^{2^{j+1}}\frac{|m_0(\xi)|^2|\hat f(\xi)|^2}{\xi}d\xi\\
= \int_{2^j}^{2^{j+1}}\frac{|m_0(\xi+\pi)|^2|\hat f(\xi)|^2}{\xi}d\xi + \int_{2^j}^{2^{j+1}}\frac{|\hat f(2\xi)|^2}{\xi}d\xi
\end{eqnarray}
We have used the fact that $\hat f(2\xi) = m_0(\xi)\hat f(\xi)$ (which follows from relation \ref{DefinitionFatherWavelet}). We make a change of variable in the second term to get:
\begin{eqnarray}
\int_{2^j}^{2^{j+1}}\frac{|\hat f(\xi)|^2}{\xi}d\xi = \int_{2^j}^{2^{j+1}}\frac{|m_0(\xi+\pi)|^2|\hat f(\xi)|^2}{\xi}d\xi + \int_{2^{j+1}}^{2^{j+2}}\frac{|\hat f(\xi)|^2}{\xi}d\xi
\end{eqnarray}

Adding these terms for $j\geq -n$ we get:
\begin{eqnarray}
\int_{2^{-n}}^{2^{-n+1}}\frac{|\hat f(\xi)|^2}{\xi}d\xi = \int_{2^{-n}}^{\infty}\frac{|m_0(\xi+\pi)|^2|\hat f(\xi)|^2}{\xi}d\xi 
\end{eqnarray} 
The product $ \prod_{j\in\BBN}m_0(\frac{\xi}{2^j})$ converges uniformly on bounded sets of $\BBR$ (see e.g. \cite{BNB00}) and thus $\hat f$ is continuous. Since $\hat f(0) = 1$ we get:
\begin{equation}
\int_{2^{-n}}^{2^{-n+1}}\frac{|\hat f(\xi)|^2}{\xi}d\xi \rightarrow \ln2 \mbox{ as } n \rightarrow \infty
\end{equation} 
which gives us (\ref{ln2identity}).

To prove the pointwise estimate we use the (inverse) Fourier transform:
\begin{eqnarray}
\psi_{j,l}^2(\theta) = \sum_n e^{2\pi i n \theta} \sum_{k\neq n,k \neq 0}\frac{2^{-j}e^{-2\pi i n l2^{-j}}}{\sqrt{2\pi|k|}\sqrt{2\pi |n-k|}}\hat\Phi(2^{-j}k)\hat\Phi(2^{-j}(n-k))
\end{eqnarray}
Since $\sum_{l=0}^{2^j-1}e^{-2\pi i n l2^{-j}} = 0$ for all $n\neq 0$ and $2^j$ for $n=0$ we get:
\begin{eqnarray}
\sum_{l=0}^{2^j-1}\psi_{j,l}^2(\theta) = \sum_{k \neq 0}\frac{1}{\sqrt{2\pi|k|}\sqrt{2\pi |k|}}\hat\Phi(2^{-j}k)\hat\Phi(2^{-j}(-k))\\
= \sum_{k \neq 0}\frac{2^{-j}}{2\pi|2^{-j}k|}\hat\Phi(2^{-j}k)\hat\Phi(2^{-j}(-k))\\
\approx  \frac{1}{2\pi}\int \frac{\hat\Phi(\xi)\hat\Phi(-\xi)}{|\xi|}d\xi = \frac{1}{2\pi}\hat{\Psi}*\hat\Psi(0) = \frac{1}{2\pi}\widehat{\Psi^2}(0) = \frac{\ln 2}{\pi}
\end{eqnarray}
by the computation in the previous part of the proof. 

The second inequality in the lemma is a consequence of the decay of the vaguelets (Lemma \ref{DecayOfVaguelet}). 
\end{proof}

\section{The Gaussian Free Field and vaguelets}
\label{SectionGFFviaVaguelets}

 Heuristically, the Gaussian Free Field is a Gaussian "random variable" on an infinite dimensional space. A precise and correct definition is more subtle. We first give a few facts about (usual) Gaussian random variables and then extend the concept to infinite dimensional spaces. We follow the presentation in \cite{S07}, where Sheffield gives a good introduction to the GFF.

Let $(\cdot,\cdot)$ be an inner product on $\BBR^d$ and let $\mu$ be the probability measure $e^{-(v,v)/2}Z^{-1}dm(v)$, where $m$ is Lebesgue measure on $\BBR^d$ and $Z$ is the normalizing constant. 

\begin{proposition}[\cite{S07}]
Let $v$ be a Lebesgue measurable random variable on $\BBR^d$ with inner product $(\cdot,\cdot)$. The following are equivalent:
\begin{itemize}
\item[a] $v$ has the (Gaussian) law $\mu$.
\item[b] $v$ has the same law as $\sum_{j=1}^d \alpha_j v_j$ where $v_1,\ldots, v_d$ are a deterministic orthonormal basis of $\BBR^d$ and $\alpha_j$ are i.i. d. Gaussian random variables with mean zero and variance one. 
\item[c] The characteristic function (Fourier transform) of $v$ is given by 
\begin{equation}\nonumber
E[e^{i(v,t)}] = e^{-\frac{||t||^2}{2}}
\end{equation}
\item[d] For each fixed $w\in\BBR^d$, the inner product $(v,w)$ is a zero mean Gaussian random variable with variance $(w,w)$.
\end{itemize}
\end{proposition}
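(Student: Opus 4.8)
The plan is to prove the equivalence of the four characterizations by the cycle $a \Rightarrow c \Rightarrow d \Rightarrow b \Rightarrow a$, which is the route that keeps each individual implication to a short computation. First, for $a \Rightarrow c$, I would diagonalize the inner product: choose a linear change of coordinates $v = Ou$ taking $(\cdot,\cdot)$ to the standard inner product, so that under $\mu$ the coordinates of $u$ are i.i.d.\ standard Gaussians. Then $E[e^{\i(v,t)}]$ factors as a product of one-dimensional Gaussian Fourier transforms, each of which is the elementary integral $\frac{1}{\sqrt{2\pi}}\int_{\BBR} e^{\i s x} e^{-x^2/2}\,\d x = e^{-s^2/2}$; reassembling the factors gives $e^{-\|t\|^2/2}$ where $\|t\|^2 = (t,t)$. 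For $c \Rightarrow d$, fix $w$ and note that the characteristic function of the real random variable $(v,w)$ is $s \mapsto E[e^{\i s (v,w)}] = E[e^{\i(v,sw)}] = e^{-\|sw\|^2/2} = e^{-s^2 (w,w)/2}$, which is exactly the characteristic function of a mean-zero Gaussian of variance $(w,w)$; since the characteristic function determines the law, $(v,w)$ has that law.

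For $d \Rightarrow b$, let $v_1,\dots,v_d$ be a deterministic orthonormal basis and set $\alpha_j = (v, v_j)$. By hypothesis each $\alpha_j$ is mean-zero Gaussian with variance $(v_j,v_j) = 1$, and more generally, for any scalars $c_j$, the combination $\sum_j c_j \alpha_j = (v, \sum_j c_j v_j)$ is mean-zero Gaussian with variance $\|\sum_j c_j v_j\|^2 = \sum_j c_j^2$ by orthonormality; this says precisely that the vector $(\alpha_1,\dots,\alpha_d)$ is jointly Gaussian with identity covariance, i.e.\ the $\alpha_j$ are i.i.d.\ standard Gaussians. Since $v = \sum_j \alpha_j v_j$ (expansion in an orthonormal basis), $v$ has the same law as $\sum_{j=1}^d \alpha_j v_j$ with i.i.d.\ standard Gaussian coefficients. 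Finally $b \Rightarrow a$: compute the density of $\sum_j \alpha_j v_j$ by pushing forward the product of standard Gaussians through the orthogonal (with respect to $(\cdot,\cdot)$) change of basis; the Jacobian is constant and the exponent $\sum_j \alpha_j^2$ equals $(v,v)$, recovering $e^{-(v,v)/2} Z^{-1}\, \d m(v)$.

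The only genuinely delicate point is the step $d \Rightarrow b$ (or equivalently wherever one passes from all one-dimensional marginals being Gaussian to joint Gaussianity): the statement is \emph{not} that an arbitrary random vector with Gaussian marginals is jointly Gaussian — that is false — but rather that if \emph{every} linear functional $(v,w)$ is Gaussian then the vector is jointly Gaussian, which one sees by checking that the joint characteristic function is $t \mapsto E[e^{\i(v,t)}] = e^{-(t,t)/2}$ (using the hypothesis with $w = t$ and the fact that $(v,t)$ has variance $(t,t)$) and invoking uniqueness of characteristic functions. I would make sure to phrase hypothesis $d$ with this universal quantifier over $w$ so the implication is correct; the remaining computations are routine Gaussian integral manipulations and the constancy of the relevant Jacobians.
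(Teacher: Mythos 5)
The paper does not prove this proposition: it is quoted from Sheffield's survey \cite{S07} as background for introducing the Gaussian Free Field, and no argument is given in the text, so there is nothing internal to compare against.

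Your cyclic argument $a \Rightarrow c \Rightarrow d \Rightarrow b \Rightarrow a$ is correct and complete. You are right to isolate $d \Rightarrow b$ as the only step with genuine content: passing from ``every linear functional $(v,w)$ is Gaussian'' to joint Gaussianity of $(\alpha_1,\ldots,\alpha_d)$ is precisely the Cram\'er--Wold device, and your treatment via the joint characteristic function is the standard and correct way to do it. The other implications are handled properly: diagonalizing the inner product so that in a $(\cdot,\cdot)$-orthonormal basis the density factors into independent one-dimensional standard normals, with a constant Jacobian in the change of variables, reduces both $a \Rightarrow c$ and $b \Rightarrow a$ to the scalar Gaussian Fourier transform, and $c \Rightarrow d$ is the one-line observation $E[e^{\i s(v,w)}] = e^{-s^2(w,w)/2}$. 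One cosmetic caveat: the hypothesis ``Lebesgue measurable random variable'' in the statement is loose phrasing; for part $a$ to be meaningful the law of $v$ must have a density with respect to Lebesgue measure on $\BBR^d$, and your argument correctly proceeds under that reading.
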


The Gaussian Free Field is supposed to be a variable on the infinite dimensional space $H^1(D) = W_0^{1,2}(D)$, where $D$ is a subdomain of $\BBR^d$. If $D$ has no boundary the space $H^1(D) = W_0^{1,2}(D)$ stands for the Sobolev space of functions with mean zero and one derivative in $L^2$. This space is a Hilbert space with inner product $(f, g)_{\nabla} = \int \nabla f\nabla g$.

Ideally one would consider an orthonormal basis $b_j$ of this space and declare the GFF to be the random variable given by $\sum_{j = 1}^{\infty} \alpha_j b_j$ with $\alpha_j\sim N(0,1)$ i.i.d. However, this sum doesn't converge in $H^1$ and one has to consider its convergence in a bigger (Banach) space. 

Alternatively, one can define the GFF as being the formal sum $h =\sum_{j = 1}^{\infty} \alpha_j b_j$ with $\alpha_j\sim N(0,1)$ i.i.d. and $\{b_j\}$ an {\it ordered} orthonormal basis. For any fixed $f\in H^1$ with $f = \sum \beta_j b_j$ one can define the inner product as a random variable $(h,f)_{\nabla} = \lim_{k\rightarrow\infty}\sum_{j=1}^k \alpha_j\beta_j$.

The Gaussian Free Field then becomes a collection of mean zero Gaussian random variables $\{ (h,f)_{\nabla}\}_{f\in H^1}$ with variance $(f,f)_{\nabla}$ and covariance strucure given by 
\begin{equation}\nonumber
E[(h,f)_{\nabla}(h,g)_{\nabla}] = (f,g)_{\nabla} 
\end{equation} 

On a manifold with no boundary $D$ in $\BBR^d$ we also have $(\rho_1,\rho_2)_{\nabla} = - (\rho_1,-\Delta \rho_2)_{L^2}$. This allows us to define the GFF as being a collection of mean zero Gaussian random variables $\{ (h,\rho)\}_{\rho\in (-\Delta)H^1(D)}$ with covariance structure given by 
\begin{equation}\nonumber
E[(h,\rho_1) (h,\rho_2)] =\int_{D\times D} \rho_1(x)G(x,y)\rho_2(y)dx dy 
\end{equation} 
where $G(x,y)$ is Green's function on $D$ (the inverse of the laplacian operator on $D$).


The Gaussian Free Field we work with is the trace on $\BBT$ of the 2-dimensional GFF. In stead of being a random variable on the space $H^1$, the trace of the 2-dimensional GFF is a random variable on the space $H_0^{1/2}$ of mean zero and half a derivative in $L^2$. Formally, this can be defined as 
\begin{equation}
H_0^{1/2}(\BBT) = \{f|\int_\BBT f d\theta= 0, \left(\frac{d}{d\theta}\right)^{1/2}f\in L^2(\BBT)\}.
\end{equation}

We recall that $\{1, \sqrt{2}\cos(2\pi n\theta), \sqrt{2}\sin(2\pi n\theta)\}$ is an orthonormal basis of $L^2(\BBT)$. This implies that $\{\frac{1}{\sqrt{\pi |n|}}\cos(2\pi  n\theta), \frac{1}{\sqrt{\pi |n|}} \sin(2\pi n\theta)\}$ form an orthonormal basis of $H_0^{1/2}(\BBT)$.
This allows Astala, Jones, Kupiainen and Saksman (\cite{AJKS09}) to define the trace on $\BBT$ of the 2-dim GFF as the the random distribution:

\begin{equation}\label{FourierGFF}
X = \sum_{n=1}^{\infty}\frac{A_n\cos(2\pi nt) + B_n\sin(2\pi n t)}{\sqrt{n}}
\end{equation}
where $A_n, B_n\sim N(0,1)$ are independent.


We can also consider a wavelet basis $\{\phi_I\}$ for $L^{2}(\BBT)$. The image under  $(-\Delta)^{-1/4}$ is a vaguelet basis $\{\psi_I\}$ for $H_0^{1/2}(\BBT)$. Then we have that up to a probability preserving transformation the GFF can be rewritten as 
\begin{equation}\nonumber
X = \sum_{I}A_I \sqrt{\pi}\psi_I
\end{equation}
where $A_I\sim N(0,1)$.  The factor $\sqrt{\pi}$ appears in this expression because it is missing in definition (\ref{FourierGFF}). From this point onwards we will include it in the notation $\psi_I$.

One can see the equivalence of the representations in the following way. If we have two bases $\{F_j\},\{G_k\}$for $H_0^{1/2}$ (e.g. coming from bases $\{f_j\},\{g_k\}$ of $L^2$) then
\begin{eqnarray*}
\sum_j A_j F_j = \sum_j A_j \sum_k (F_j,G_k)_{-1/4}G_k = \sum_k G_k \sum_j (F_j,G_k)_{-1/4} A_j
\end{eqnarray*} 
Since $\sum_j  (F_j,G_k)^2_{-1/4} = 1$ for all $k$ then $\sum_j (F_j,G_k)_{-1/4} A_j \sim N(0,1)$ so
\begin{equation}\nonumber
\sum_j A_j F_j = \sum_k B_k G_k,\ B_k \in N(0,1)
\end{equation}
up to a measure preserving transformation.

In \cite{AJKS09} Astala, Jones, Kupiainen and Saksman used a white noise representation for the GFF. Gaussian white noise is a centered Gaussian process, indexed by sets of finite hyperbolic area measure in the upper half-plane and with covariance structure given by the hypebolic area measure of the intersection of sets. The trace of the Gaussian Free Field on $\BBT$ was then expressed as
\begin{eqnarray}
H(x) = W(x+H), x\in \BBT \mbox{ where } \\
H=\{(x,y)\in \BBH | -1/2<x<1/2, y>\frac{2}{\pi}\tan(|\pi x|)\}
\end{eqnarray} 
The geometry of the set $H$ allowed Astala, Jones, Kupiainen, Saksman to decouple the variables on different scales in their main probabilistic estimate. While the vaguelets are slightly more complicated, their tails decay fast enough to allow us a similar decoupling.


\section{Random measure}\label{SectionRandomMeasure}

\subsection{Construction of the measure}

We write the GFF as 
\begin{equation}
X = \sum_I a_I \psi_I(\theta)
\end{equation}
where $a_I \sim N(0,1)$ and $\{\psi_I\}$ are the vaguelets defined in section \ref{SectionVaguelets}, scaled by the factor $\sqrt{\pi}$ as we pointed out in section \ref{SectionGFFviaVaguelets}.

Take $t_k = t_c - k^{-\gamma}$ and  $n_k \sim (k+1)^{\gamma}e^{\frac{1}{\epsilon}C(k+1)^{3\gamma (k+1)^\gamma}}$. Define $S_0 :=0$ and 
\begin{equation}
 S_{k+1}(\theta) := S_{k}(\theta) + \sum_{I: |I|\in [2^{-n_{k+1}}, 2^{-(n_{k}+1)}]} \left(a_{I}\psi_I(\theta) - \frac{t_{k+1}}{2}\psi^2_I(\theta)\right), \forall k\geq 0.
\end{equation}

Here $a_{I}$ are independent centered Gaussian random variables of variance $t_{k+1}$ for $|I|\in [2^{-n_{k+1}}, 2^{-(n_{k}+1)}]$. The sequence $\{t_k\}$ is increasing to $t_{c}=2$. Define 
\begin{eqnarray}
d\nu_k := e^{S_k(\theta)}d\theta \label{definitionmeasure}\\
F_k := \int_{[0,1]} d\nu_k = \int_{[0,1]} e^{S_k(\theta)}d\theta. 
\end{eqnarray}

It is easy to see $\{F_k\}$ is an $L^1$ martingale and hence it has an almost sure limit $F_0$. We want to prove this martingale is in the space $L\log L$ to ensure the $F_k\rightarrow F_0$ in $L^1$. This and Kolmogorov's zero-one law imply $F_0$ is almost surely nonzero. The subcritical case, $t_k = t < 2$ was studied by Kahane (\cite{K85}) who proved that the martingale is in $L^p$ for some $p=p(t)>1$.

In the next result we will repeatedly use the equivalence of the $L^p$ norm of a martingale to the $L^p$ norm of its square function. Let $\{M_k\}$ with $M_0 = 0$ be an $L^p$ martingale for $1<p<\infty$. Define the martingale differences $\Delta_k := M_k-M_{k-1}$ and set $\mathcal{S}_k := \left(\sum_{i=1}^k \Delta_i^2\right)^{1/2}$. The latter is called {\it the martingale square function} and captures the $L^p$ behavior of the martingale (see \cite{B66}): 
\begin{equation}
\frac{1}{C_p}E[\mathcal{S}_k^p]\leq E[|M_k|^p]\leq C_p E[\mathcal{S}_k^p]
\end{equation}
The constant $C_p$ has order of magnitude $\frac{1}{p-1}$ and is independent of the martingale. We will apply the right inequality repeatedly in the case when $p\in(1,2]$ in which case the function $x\rightarrow x^p$ is subadditive and the inequality becomes 
\begin{equation}
 E[|M_k|^p]\leq C_p E[\sum_{i=1}^k \Delta_i^p]
\end{equation}
Heuristically, one can interpret this as saying that the martingale differences behave as if they were independent.

\begin{theorem} \label{LLogLMomentEstimate}
Take $t_k = t_c - k^{-\gamma}$ and  $n_k \sim (k+1)^{\gamma}e^{C(k+1)^{3\gamma (k+1)^\gamma}/\epsilon}$. Then the martingale $F_k$ satisfies $E[F_k\log(1+F_k)] <C$.
\end{theorem}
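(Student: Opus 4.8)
The plan is to control $E[F_k\log(1+F_k)]$ by passing to the martingale square function and exploiting the two structural facts already at our disposal: the pointwise variance estimate \eqref{VagueletInequality}, which says $\sum_{|J|\geq 2^{-m}}\psi_J^2(\theta)=\frac{(m+1)\ln 2}{\pi}+O(1)$, and the $L^p$--square-function inequality $E[|M_k|^p]\leq C_p\, E[\sum_i \Delta_i^p]$ with $C_p\sim (p-1)^{-1}$. Since $x\log(1+x)\lesssim_\delta x^{1+\delta}$ for any fixed $\delta>0$, it suffices to bound $E[F_k^{p}]$ for a $p=p_k>1$ that we are allowed to let depend on $k$, provided the bound stays uniformly controlled; more efficiently, one should run the estimate with $p$ shrinking to $1$ along the scales, which is exactly the role played by the very fast growth of $n_k$. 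First I would fix notation: write $\Delta_j = \nu_j([0,1]) - \nu_{j-1}([0,1])$, so $F_k = \sum_{j\le k}\Delta_j$, and set $T_j(\theta) := \sum_{I:\,|I|\in[2^{-n_j},2^{-(n_{j-1}+1)}]}\big(a_I\psi_I(\theta)-\tfrac{t_j}{2}\psi_I^2(\theta)\big)$, so that $e^{S_k} = e^{S_{k-1}}e^{T_k}$ and $\Delta_k = \int_{[0,1]} e^{S_{k-1}(\theta)}(e^{T_k(\theta)}-1)\,d\theta$.

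The key computation is a per-scale $L^p$ estimate. Conditioning on $\mathcal{F}_{k-1}$ (the $\sigma$-algebra of the coefficients at scales $\le n_{k-1}$) and using that the block $T_k$ is a centered Gaussian field of variance essentially $t_k\cdot\big(n_k-n_{k-1}\big)\ln 2/\pi$ at each point, one has for the single increment, by Minkowski and the Gaussian Laplace transform,
\begin{equation}\nonumber
E\big[|\Delta_k|^p \,\big|\, \mathcal{F}_{k-1}\big]^{1/p} \le \int_{[0,1]} e^{S_{k-1}(\theta)}\, E\big[|e^{T_k(\theta)}-1|^p\big]^{1/p}\,d\theta,
\end{equation}
and the crucial point is that $E[(e^{T_k(\theta)}-1)^p]$ is bounded once $p\,\cdot\,(\text{per-point variance})$ stays below the threshold where $e^{pT_k}$ fails to have finite mean after subtracting — i.e. the subcriticality $t_k<2$ enters here, and one must choose $p-1$ small enough relative to $2-t_k = k^{-\gamma}$ and to $n_k-n_{k-1}$. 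Concretely I expect the constraint $E[e^{pT_k}]\lesssim 1$ to force something like $(p-1)(n_k-n_{k-1}) \lesssim$ (a power of $k$), which is why $n_k$ is taken to grow doubly-exponentially in $k^\gamma$: it buys room to take $p_k\to 1$ slowly while the telescoped product of the per-scale constants still converges. I would then iterate: $E[F_k^{p}] \le C_p\sum_{j\le k} E[|\Delta_j|^p]$, and inside each term replace $E[e^{S_{j-1}(\theta)}\cdots]$ by pulling out $E[e^{S_{j-2}}\cdots]$ recursively, so that the whole thing becomes a product $\prod_{j\le k}(1 + \epsilon_j)$ of factors each $1+O(j^{-2})$ or so, hence bounded. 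Summing the geometric-type series over $j$ and over the scales, and finally undoing $x\log(1+x)\lesssim x^{1+\delta}$ with the $k$-dependent exponent, yields $E[F_k\log(1+F_k)]\le C$ uniformly in $k$.

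The main obstacle, and where all the real work lies, is the tension between two competing demands in the choice of $p_k$: it must be $>1$ by a definite amount to make $x\log(1+x)\le x^{p_k}$ useful (so $C_{p_k}\sim (p_k-1)^{-1}$ does not blow up faster than the gain), yet it must be close enough to $1$ that $e^{p_k T_k(\theta)}$ — a log-normal with variance $\sim p_k(n_k-n_{k-1})\ln2/\pi$ — does not overwhelm the $O(1)$ bound we need for $E[(e^{T_k}-1)^{p_k}]$, remembering that criticality is at total variance-rate $t_c=2$ and $t_k$ is already within $k^{-\gamma}$ of it. Getting the bookkeeping of these constants to telescope — i.e. proving $\prod_j (1+\epsilon_j)<\infty$ with the $\epsilon_j$ coming from the square-function constants, the Gaussian moment bounds, and the $O(1)$ errors in \eqref{VagueletInequality} — is the heart of the argument, and is precisely what dictates the peculiar form $n_k\sim (k+1)^\gamma e^{C(k+1)^{3\gamma(k+1)^\gamma}/\epsilon}$. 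A secondary technical point is handling the pointwise (rather than $L^1$-averaged) variance control so that the Gaussian Laplace-transform estimate is valid uniformly in $\theta$; Lemma \ref{VagueletLemma} is exactly what makes this uniform, and the near-diagonal decay of Lemma \ref{DecayOfVaguelet} is what lets one localize each block's contribution when needed.
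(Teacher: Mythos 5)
There is a genuine gap in the per-scale estimate, which is the heart of the argument. Your proposed bound via Minkowski's integral inequality and the Gaussian Laplace transform,
\begin{equation*}
E\big[|\Delta_k|^p \,\big|\, \mathcal{F}_{k-1}\big]^{1/p} \le \int_{[0,1]} e^{S_{k-1}(\theta)}\, E\big[|e^{T_k(\theta)}-1|^p\big]^{1/p}\,d\theta,
\end{equation*}
is a correct inequality but far too weak, and the claim that $E[|e^{T_k(\theta)}-1|^p]$ is bounded for the parameters in the theorem is false. Conditionally on $\mathcal{F}_{k-1}$, $T_k(\theta)$ is a compensated Gaussian with pointwise variance $\sigma_k^2\approx t_k(n_k-n_{k-1})\ln 2/\pi$, so $E[e^{pT_k(\theta)}]=e^{(p^2-p)\sigma_k^2/2}$ and hence $E[|e^{T_k(\theta)}-1|^p]$ is of the same (enormous) order — $n_k-n_{k-1}$ grows doubly exponentially while the paper takes $p_k-1\sim k^{-\gamma}/2$, so $(p_k-1)(n_k-n_{k-1})\to\infty$ rapidly, not ``stays below the threshold.'' You note in passing that one could force $(p-1)(n_k-n_{k-1})$ to stay bounded, but then $c_p\sim(p-1)^{-1}\sim n_k-n_{k-1}$ blows up and you have traded one divergence for another; there is no indication of how to escape this. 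What Minkowski throws away is precisely the cancellation in $\int e^{S_{k-1}}(e^{T_k}-1)\,d\theta$ coming from $e^{T_k(\theta)}-1$ having conditional mean zero at each $\theta$ and from the near-independence across spatial intervals. The paper extracts this cancellation by decomposing $\Delta_k$ itself as a nested three-level martingale — across the $2^{n_{k-1}+1}$ intervals at the coarsest scale of the block, then across the dyadic scales $i$ within the block, then across the $2^i$ intervals at each scale, down to individual vaguelets — applying the square-function inequality at each level (hence the $c_p^3$ factor). The decisive gain appears only at the innermost level, through the $L^1$ smallness of a single vaguelet, $\int_0^1|\psi_{I_\chi}|\,d\theta\lesssim|I_\chi|=2^{-(n_k+i)}$, which supplies the factor $2^{-(n_k+i)p}$ and turns the sign: one lands on $E[|\Delta_k|^p]\lesssim c_p^3\,2^{-n_k(p-1)(1-p t_k/2)}\sum_i 2^{-i(p-1)(1-pt_{k+1}/2)}$, with the exponent negative because $(1-pt_k/2)>0$. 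Your recursion ``pull out $E[e^{S_{j-2}}]$'' would only reproduce $E[e^{pS_k}]=\prod_j e^{(p^2-p)\sigma_j^2/2}$, which is the same lossy bound.

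A secondary divergence: the paper does not bound $E[F_k^{p_k}]$ and then ``undo'' $x\log(1+x)\lesssim_\delta x^{1+\delta}$. It telescopes $E[F_k\log(e+F_k)]$ directly, using the mean value theorem for $x\mapsto x\log(e+x)$, H\"older against $\log(e+\sup_{i\le k+1}F_i)$ (whose $L^q$ norm is $\sim q$ by Doob's inequality and a tail estimate), and arranging the $n_k$ so that the $k$-th telescoping increment is $\lesssim(k+1)^{-2}$. Your route would require $\frac{E[F_k^{p_k}]-1}{p_k-1}\le C$ uniformly, which — since $E[F_k^{p_k}]\ge 1$ by Jensen and $\frac{d}{dp}E[F^p]\big|_{p=1}=E[F\log F]$ — is essentially the very estimate you are trying to prove, re-encoded as a difference quotient. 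It is not an outright contradiction, but it does not sidestep the difficulty, and the paper's direct telescoping of the Orlicz quantity is cleaner.
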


\begin{corollary}
The maximal function $F^*=\sup_k |F_k|$ is in $L^1$ and hence the martingale $\{F_k\}$ is an $H^1$-bounded martingale ($H^1$ denotes here the Hardy space).
\end{corollary}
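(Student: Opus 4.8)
The preceding theorem already supplies the only nontrivial ingredient, namely $\sup_k E\big[F_k\log(1+F_k)\big]\le C$; the corollary is then the classical implication that an $L\log L$-bounded martingale has its maximal function in $L^1$ (Doob's $L\log L$ maximal inequality), and the plan is simply to run that argument. First note that $F_k=\int_{[0,1]}e^{S_k(\theta)}\,d\theta\ge 0$, so $|F_k|=F_k$, $F^*=\sup_k F_k$, and (since $\log^+x\le\log(1+x)$) $\sup_k E[F_k\log^+F_k]\le C$. Each $F_k$ lies in $L\log L\subset L^1$, hence for fixed $n$ the finite maximal function $F_n^*:=\max_{0\le k\le n}F_k$ is integrable; this a priori integrability is what will make the rearrangement below legitimate.

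Next I would apply Doob's weak-type maximal inequality to the nonnegative (sub)martingale $\{F_k\}$: for every $\lambda>0$, $\lambda\,P(F_n^*\ge\lambda)\le E\big[F_n\,\mathbf 1_{\{F_n^*\ge\lambda\}}\big]$. Integrating in $\lambda$ and applying Fubini gives
\[
E[F_n^*]=\int_0^\infty P(F_n^*\ge\lambda)\,d\lambda\ \le\ 1+\int_1^\infty\frac1\lambda E\big[F_n\,\mathbf 1_{\{F_n^*\ge\lambda\}}\big]\,d\lambda\ =\ 1+E\big[F_n\log^+F_n^*\big].
\]
Now I invoke the elementary inequality $x\log^+y\le x\log^+x+\tfrac1e\,y$, valid for all $x,y\ge 0$ (for fixed $y$ the left side is maximized over $x$ at $x=y/e$), with $x=F_n$ and $y=F_n^*$, obtaining
\[
E\big[F_n\log^+F_n^*\big]\ \le\ E[F_n\log^+F_n]+\tfrac1e\,E[F_n^*]\ \le\ C+\tfrac1e\,E[F_n^*].
\]
Combining the two displays and using $E[F_n^*]<\infty$, one may subtract to get $\big(1-\tfrac1e\big)E[F_n^*]\le 1+C$, so $E[F_n^*]\le\frac{e}{e-1}(1+C)$ uniformly in $n$. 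Letting $n\to\infty$, $F_n^*\uparrow F^*$ monotonically, so monotone convergence yields $E[F^*]\le\frac{e}{e-1}(1+C)<\infty$, i.e. $F^*\in L^1$. Since the martingale Hardy-space norm is by definition (equivalently, by Davis's theorem) the $L^1$ norm of the maximal function, $\{F_k\}$ is $H^1$-bounded.

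I do not expect a genuine obstacle here: the computation is the textbook ``$L\log L\Rightarrow$ maximal function in $L^1$'' argument, and all the real work is hidden in the hypothesis $\sup_kE[F_k\log(1+F_k)]\le C$ furnished by the previous theorem. The only points deserving care are (i) that $F_k\ge0$, which removes all absolute values and makes $F_n^*\uparrow F^*$ pointwise (justifying the limit $n\to\infty$), (ii) the a priori integrability of each $F_n^*$, without which the cancellation $(1-\tfrac1e)E[F_n^*]\le 1+C$ would be circular, and (iii) the identification of $F^*\in L^1$ with $H^1$-boundedness of the martingale.
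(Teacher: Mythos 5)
Your proof is correct and is exactly the classical derivation that the paper implicitly invokes (it states the corollary without proof): Doob's weak-type $(1,1)$ maximal inequality, the $\int_1^\infty\lambda^{-1}\mathbf{1}_{\{F_n^*\ge\lambda\}}\,d\lambda=\log^+F_n^*$ rearrangement, the Young-type inequality $x\log^+y\le x\log^+x+y/e$, absorption using the a priori finiteness of $E[F_n^*]$, and monotone convergence. You were right to flag the three points of care (nonnegativity, a priori integrability of $F_n^*$ before absorbing, and the identification of $H^1$-boundedness with $F^*\in L^1$); these are indeed the only places a careless write-up could slip.
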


\begin{proof}[Proof of theorem \ref{LLogLMomentEstimate}] We begin by obtaining $L^p$ estimates on the martingale differences. 

\begin{eqnarray*}
F_{k+1}-F_k = \int_0^1 e^{S_{k+1}(\theta)} -e^{S_k(\theta)} d\theta = \int_0^1 e^{S_k(\theta)}(e^{\sum_I \left(a_{I}\psi_I(\theta) - \frac{t_k}{2}\psi^2_I(\theta)\right)} -1)d\theta
\end{eqnarray*}
The exponent of the second term can be written as 
\begin{equation}
\sum_I \left(a_{I}\psi_I(\theta) - \frac{t_{k+1}}{2}\psi^2_I(\theta)\right) = \sum_{|J| = 2^{-(n_{k}+1)}}\sum_{I \subset J} \left(a_{I}\psi_I(\theta) - \frac{t_{k+1}}{2}\psi^2_I(\theta)\right) =: \sum_{|J| = 2^{-n_{k}+1}} A_J
\end{equation} 
(with the obvious definition). In fact, we may denote $A_J$, by $A_j$ where $j\in \{1,\ldots, 2^{n_{k}+1}\}$. Set $A_0 = 0$. We may now write: 
\begin{eqnarray*}
F_{k+1}-F_k = \int_0^1 e^{S_k(\theta)}(e^{\sum_{j=1}^{2^{n_{k}+1}} A_j}-1)d\theta = \\
 \sum_{l=1}^{2^{n_{k}+1}} \int_0^1 e^{S_k(\theta)}(e^{\sum_{j=0}^{l} A_j}-e^{\sum_{j=0}^{l-1} A_j})d\theta =\\ \sum_{l=1}^{2^{n_{k}+1}} \int_0^1 e^{S_k(\theta)}e^{\sum_{j=0}^{l-1} A_j}(e^{A_l}-1)d\theta =\\
\sum_l \int_0^1 X_l Y_l d\theta
\end{eqnarray*}
where $Y_l := (e^{A_l}-1)$ and are mutually independent and independent of $\{X_l\}$ (stand for the other two terms). The independence follows from the fact that the normal variables which appear in the definition of $A_l$ are associated with the dyadic intervals which are subsets of the dyadic interval of size $2^{-(n_k+1)}$ which corresponds to $l$. 

We also have $E[Y_l] = 0$. This implies  $\{\sum_{l=0}^{L} \int_0^1 X_l Y_l d\theta\}_0^{2^{n_k+1}}$ is a martingale with respect to increasing $L$. This implies (via the martingale square function) for $p\in(1,2]$:
\begin{eqnarray}
E[|F_k-F_{k+1}|^p] \leq c_p \sum_l E[| \int_0^1 X_l Y_l d\theta|^p]
\end{eqnarray}

We estimate the term $\int_0^1 X_l Y_l d\theta$ in the same way since it can itself be thought of as a martingale in the following way.  Denote by $J(i)$ the collection of dyadic intervals $I\subset J$ such that $|I| \geq |J|2^{-i}$. Set also $J(-1) = \emptyset $ and 
$$A_J = \sum_{I \subset J} \left(a_{I}\psi_I(\theta) - \frac{t_{k+1}}{2}\psi^2_I(\theta)\right) = \sum_{i=1}^{n_{k+1}-n_k} \sum_{I\in J(i)\setminus J(i-1)} \left(a_{I}\psi_I(\theta) - \frac{t_{k+1}}{2}\psi^2_I(\theta)\right).$$ 
We may now write:
\begin{eqnarray*}
\int_0^1 X_l Y_l d\theta = \sum_{i=0}^{n_{k+1}-n_k} \int_0^1 X_l (e^{\sum_{I\in J(i)}\left(a_{I}\psi_I(\theta) - \frac{t_{k+1}}{2}\psi^2_I(\theta)\right)}-e^{\sum_{I\in J(i-1)}\left(a_{I}\psi_I(\theta) - \frac{t_{k+1}}{2}\psi^2_I(\theta)\right)}) d\theta \\
=  \sum_{i=0}^{n_{k+1}-n_k} \int_0^1 X_l e^{\sum_{I\in J(i-1)}\left(a_{I}\psi_I(\theta) - \frac{t_{k+1}}{2}\psi^2_I(\theta)\right)}(e^{\sum_{I\in J(i)\setminus J(i-1)}\left(a_{I}\psi_I(\theta) - \frac{t_{k+1}}{2}\psi^2_I(\theta)\right)} -1)d\theta \\
=  \sum_{i=0}^{n_{k+1}-n_k} \int_0^1 X_l   Z_{l,i-1} T_{l,i} d\theta
\end{eqnarray*}
We use the index $l$ in stead of $J$ because we will sum by $l$ later and $J$ is the $l$'th dyadic interval of length $2^{-n_k-1}$.

The random variables $T_{l,i}$ are mutually independent (with respect to $i$) and are also independent of $\{X_l Z_{l,\cdot}\}_0^{i-1}$. They also have mean equal to zero. By the same argument as before:
\begin{eqnarray*}
E[| \int_0^1 X_l Y_l d\theta|^p] \leq c_p \sum_{i=0}^{n_{k+1}-n_k} E[| \int_0^1 X_l   Z_{l,i-1} T_{l,i} d\theta|^p]
\end{eqnarray*}

In $J(i)\setminus J(i-1)$ there are $2^{i}$ dyadic intervals of length $2^{-n_k-i}$ which we now index by $\chi$. The associated random variables are centered independent Gaussians. So we may write yet again:
\begin{eqnarray*}
 \int_0^1 X_l   Z_{l,i-1} T_{l,i} d\theta = \sum_{\chi=1}^{2^i} \int_0^1 X_l Z_{l,i-1}e^{\sum_1^{\chi-1} (a_{I}\psi_I(\theta) - \frac{t_{k+1}}{2}\psi^2_I(\theta))}(e^{a_{I_\chi}\psi_{I_\chi}(\theta) - \frac{t_{k+1}}{2}\psi^2_{I_\chi}(\theta)}-1)d\theta \\
 = \sum_{\chi=1}^{2^i} \int_0^1 X_l Z_{l,i-1} U_{\chi-1} V_{\chi} d\theta
\end{eqnarray*}
and 
\begin{eqnarray*}
E[| \int_0^1 X_l   Z_{l,i-1} T_{l,i} d\theta|^p] \leq c_p  \sum_{\chi=1}^{2^i} E[| \int_0^1 X_l Z_{l,i-1} U_{\chi-1} V_{\chi} d\theta|^p]
\end{eqnarray*}

Now $|V_\chi | = |e^{a_{I_\chi}\psi_{I_\chi}(\theta) - \frac{t_{k+1}}{2}\psi^2_{I_\chi}(\theta)}-1|\leq e^{|a_{I_\chi}\psi_{I_\chi}(\theta) - \frac{t_{k+1}}{2}\psi^2_{I_\chi}(\theta)|} |a_{I_\chi} - \frac{t_{k+1}}{2}\psi_{I_\chi}(\theta)| |\psi_{I_\chi}(\theta)|$ by the mean value theorem. We know that $|\psi_{I_\chi}| \leq c$ and with the obvious notation we have $|V_\chi | \leq e^{cW_\chi}W_\chi |\psi_{I_\chi}|$. It is easy to see that $E[e^{pcW_\chi}W_\chi^p] < C$ some universal constant.

So we have 
\begin{eqnarray*}
E[| \int_0^1 X_l Z_{l,i-1} U_{\chi-1} V_{\chi} d\theta|^p] \leq E[|\int_0^1 X_l Z_{l,i-1} U_{\chi-1} e^{cW_\chi}W_\chi |\psi_{I_\chi}(\theta)| d\theta|^p]\\
\leq   \left(\int_0^1 |\psi_{I_{\chi}}| d\theta\right)^p E[\left( \frac{1}{\int_0^1|\psi_{I_{\chi}}|  d\theta}\int_0^1 X_l Z_{l,i-1} U_{\chi-1} e^{cW_\chi}W_\chi|\psi_{I_{\chi}}| d\theta\right)^p]  \\
 \leq   \left(\int_0^1 |\psi_{I_{\chi}}| d\theta\right)^p E[ \frac{1}{\int_0^1|\psi_{I_{\chi}}|  d\theta}\int_0^1 X_l^p Z_{l,i-1}^p U_{\chi-1}^p e^{pcW_\chi}W_\chi^p|\psi_{I_{\chi}}| d\theta]\\
 \leq \left(\int_0^1 |\psi_{I_{\chi}}| d\theta\right)^p \frac{1}{\int_0^1|\psi_{I_{\chi}}|  d\theta}\int_0^1E[X_l^p Z_{l,i-1}^p U_{\chi-1}^p e^{pcW_\chi}W_\chi^p]|\psi_{I_{\chi}}| d\theta \\
 =  \left(\int_0^1 |\psi_{I_{\chi}}| d\theta\right)^p \frac{1}{\int_0^1|\psi_{I_{\chi}}|  d\theta}\int_0^1E[X_l^p] E[Z_{l,i-1}^p] E[U_{\chi-1}^p] E[e^{pcW_\chi}W_\chi^p]|\psi_{I_{\chi}}| d\theta 
 \end{eqnarray*}
 The last inequality holds by Jensen's inequality, while the equality holds because the random variables $X_l^p, Z_{l,i-1}^p, U_{\chi-1}^p, e^{pcW_\chi}W_\chi^p$ are independent. 
 
 One may easily see that :
 \begin{eqnarray*}
 \int_0^1 |\psi_{I_{\chi}}|d\theta\leq C_1 |I_{\chi}| \\
 E[e^{pcW_\chi}W_\chi^p] < C, \mbox{ universal constant}\\
 E[U_{\chi-1}^p] = e^{\sum_1^{\chi-1}(p^2-p)\frac{t_{k+1}\psi_I^2(\theta)}{2}} \\
 E[Z_{l,i-1}^p] = e^{\sum_{I\in J(i-1)}(p^2-p)\frac{t_{k+1}\psi_I^2(\theta)}{2}}\\
 = e^{\sum_{I\in J_l(i-1)}(p^2-p)\frac{t_{k+1}\psi_I^2(\theta)}{2}}\\
 E[X_l^p] = E[e^{pS_k}] e^{\sum_{j=0}^{l-1}\sum_{I\subset J_j}(p^2-p)\frac{t_{k+1}\psi_I^2(\theta)}{2}} \\
 = e^{\sum_{|I|\geq 2^{-n_k}}(p^2-p)\frac{t_{I}\psi_I^2(\theta)}{2}}e^{\sum_{j=0}^{l-1}\sum_{I\subset J_j}(p^2-p)\frac{t_{k+1}\psi_I^2(\theta)}{2}}
 \end{eqnarray*}
 Putting all this information together we get:
 \begin{eqnarray*}
 E[X_l^p] E[Z_{l,i-1}^p] E[U_{\chi-1}^p] E[e^{pcW_\chi}W_\chi^p] \\
\leq C e^{\sum_{|I|\geq 2^{-n_k}}(p^2-p)\frac{t_{I}\psi_I^2(\theta)}{2}}e^{\sum_{|I|\in [2^{-n_k-i},2^{-n_k-1}]}(p^2-p)\frac{t_{k+1}\psi_I^2(\theta)}{2}} \\
 \leq C  e^{\sum_{|I|\geq 2^{-n_k}}(p^2-p)\frac{t_{I}\psi_I^2(\theta)}{2}}e^{(p^2-p)\frac{t_{k+1}}{2}\sum_{|I|\in [2^{-n_k-i},2^{-n_k-1}]}\psi_I^2(\theta)}
  \end{eqnarray*}
 for all $l,i,\chi$ in their respective ranges. 

We apply inequality $(\ref{VagueletInequality})$ to get
 \begin{eqnarray*}
 e^{(p^2-p)\frac{t_{k+1}}{2}\sum_{|I|\in [2^{-n_k-i},2^{-n_k-1}]}\psi_I^2(\theta)} \leq Ce^{(p^2-p)i\ln2\frac{t_{k+1}}{2}}
 \end{eqnarray*}
 Similarly,
 \begin{eqnarray*}
e^{\sum_{|I|\geq 2^{-n_k}}(p^2-p)\frac{t_{I}\psi_I^2(\theta)}{2}} \leq Ce^{(p^2-p)\sum_{m=1}^{k}\frac{(n_{m}-n_{m-1})t_m\ln2}{2}}
 \end{eqnarray*}

Finally we may write
\begin{eqnarray*}
E[| \int_0^1 X_l Z_{l,i-1} U_{\chi-1} V_{\chi} d\theta|^p] \leq  C e^{(p^2-p)\sum_{a=1}^{k}\frac{(n_{a}-n_{a-1})t_a\ln2}{2}} e^{C_0(p^2-p)i\ln2\frac{t_{k+1}}{2}}
\end{eqnarray*} 

Taking all this into consideration we get:
\begin{eqnarray*}
E[|F_{k+1}-F_k|^p] \leq c_p^3 CC_1^p\sum_{l=0}^{2^{n_k+1}} \sum_{i=1}^{n_{k+1}-n_k}\sum_{\chi=1}^{2^i}2^{-(n_k+i)p} 2^{(p^2-p)\sum_{m=1}^{k}\frac{(n_{m}-n_{m-1})t_m}{2}}2^{(p^2-p)i\frac{t_{k+1}}{2}} \\
\leq c_p^3 CC_1^p2^{n_k}2^{-pn_k}2^{(p^2-p)\sum_{a=1}^{k}\frac{(n_{a}-n_{a-1})t_a}{2}} \sum_{i=1}^{n_{k+1}-n_k} 2^i 2^{-pi}2^{(p^2-p)i\frac{t_{k+1}}{2}} \\
\leq c_p^3 CC_1^p 2^{-n_k(p-1)+ (p^2-p)\sum_{a=1}^{k}\frac{(n_{a}-n_{a-1})t_a}{2}} \sum_{i=1}^{n_{k+1}-n_k} 2^{-i(p-1) + (p^2-p)i\frac{t_{k+1}}{2}} \\
\leq c_p^3 CC_1^p 2^{-n_k(p-1)(1 - \frac{ p t_k}{2})} \sum_{i=1}^{n_{k+1}-n_k} 2^{-i(p-1)(1 - \frac{p t_{k+1}}{2})}
\end{eqnarray*}

We have used the fact that $$\sum_{m=1}^{k}\frac{(n_{m}-n_{m-1})t_m}{2} = \frac{n_kt_k}{2} + \sum_{m=1}^{k-2}\frac{n_m(t_m-t_{m+1})}{2}\leq \frac{n_kt_k}{2}$$
which follows from the fact that the sequence $\{t_k\}$ is increasing.
 
 We make two observations. First, notice that $t_{c} = 2$. Secondly, the first martingale difference, $F_1-F_0$, dominates all others and satisfies 
\begin{equation}\label{LpDominatingEstimate}
E[|F_1-F_0|^{p_1}]\leq \frac{C}{2^{(p_1-1)(1-\frac{p_1t_1}{2})}-1}
\end{equation}
The power $p_1$ is chosen such that $(p_1-1)(1-\frac{p_1t_1}{2}) > 0$. The closer $t_1$ is to $t_c$, the closer $p_1$ is to zero and the worse this bound is. 

The next step is to obtain bounds on $E[F_k \log(e+F_k)]$. 

By the mean value theorem (applied to $f(x) = x\ln (e+ x)$) we may write
\begin{eqnarray*}
E[|F_{k+1}\ln(e + F_{k+1}) - F_k \ln(e+ F_k)|] \\ 
\leq E[|F_{k+1}-F_k| \sup_{\xi\in(F_k, F_{k+1})}(\ln (e+\xi)+1)]\\
\leq E[|F_{k+1}-F_k| \ln (e+\sup_{ i\leq k+1}F_i)] + E[|F_{k+1}-F_k|]
\end{eqnarray*}
We need to bound the first term. We start by applying Holder inequality, and continue on the third step by Doob's inequality:
\begin{eqnarray*}
E[|F_{k+1}-F_k| \ln (e+\sup_{ i\leq k+1}F_i)] \leq E[|F_{k+1}-F_k|^p]^{1/p}E[\ln (e+\sup_{ i\leq k+1}F_i)^q]^{1/q}\\
E[\ln (e+\sup_{ i\leq k+1}F_i)^q] = 1+q\int_1^{\infty} \lambda^{q-1}P(\ln (e+\sup_{ i\leq k+1}F_i) >\lambda)d\lambda \\
P(\ln (e+\sup_{ i\leq k+1}F_i) >\lambda) = P(\sup_{ i\leq k+1}F_i > e^{\lambda}-e)\leq \frac{E[F_{k+1}]}{e^{\lambda}-e} \approx e^{-\lambda} \\
E[\ln (e+\sup_{ i\leq k+1}F_i)^q] = 1+q\int_1^{\infty} \lambda^{q-1}e^{-\lambda}d\lambda\leq Cq (q!)\\
E[|F_{k+1}-F_k| \ln (e+\sup_{ i\leq k+1}F_i)] \leq C E[|F_{k+1}-F_k|^p]^{1/p} (q (q!))^{1/q}\\  
\leq C E[|F_{k+1}-F_k|^p]^{1/p} q = C E[|F_{k+1}-F_k|^p]^{1/p} \frac{p}{p-1}.
\end{eqnarray*}
Now we put everything together ($C$ is universal, i.e. it doesn't depend on $k,p$). 
\begin{eqnarray*}
E[F_{k+1}\ln(e+ F_{k+1})]  \leq  E[F_k\ln(e+ F_k)] +E[|F_k-F_{k+1}|] +\\
+ C E[|F_{k+1}-F_k|^p]^{1/p} \frac{p}{p-1}\\ 
\end{eqnarray*}
We want to make sure that the second and third term add up to less than $s = \frac{2}{(k+1)^2}$. We use the estimates above for $p = p_{k+1}$, which we specify below. It depends on our choice $t_k$. 
\begin{eqnarray*}
E[|F_k-F_{k+1}|^{p_{k+1}}] \leq  c_{p_{k+1}} C2^{n_k}2^{(p_{k+1}^2-p_{k+1})\sum_{i=1}^{k}\frac{(n_{i}-n_{i-1})t_i}{2}} 2^{-n_kp_{k+1}}  = \\
= c_{p_{k+1}} C 2^{n_k(1-p_{k+1})+(p_{k+1}^2-p_{k+1}) S_k}
\end{eqnarray*}
where we have denoted $S_k = \sum_{i=1}^{k}\frac{(n_{i}-n_{i-1})t_i}{2}$. This implies:
\begin{eqnarray*}
\left(E[|F_k-F_{k+1}|^{p_{k+1}}]\right) ^{1/p_{k+1}}\leq C 2^{n_k\frac{1-p_{k+1}}{p_{k+1}}+(p_{k+1}-1) S_k +\frac{\ln c_{p_{k+1}}}{p_{k+1}}}
\end{eqnarray*}
For $t_k = t_c - k^{-\gamma}$ we take $p_k = 1 + k^{-\gamma}/2$. It suffices to have $n_k$  such that:
\begin{eqnarray*}
2^{n_k\frac{1-p_{k+1}}{p_{k+1}}+(p_{k+1}-1) S_k +\frac{\ln c_{p_{k+1}}}{p_{k+1}} -2\ln(p_{k+1}-1)} < \frac{1}{(k+1)^2} 
\end{eqnarray*}
which is equivalent to: 
\begin{eqnarray*}
n_k\frac{1-p_{k+1}}{p_{k+1}}+(p_{k+1}-1) S_k +\frac{\ln c_{p_{k+1}}}{p_{k+1}} -2\ln(p_{k+1}-1) < -2\ln(k+1) 
\end{eqnarray*}
A closer look at $S_k$ reveals that this is less than $n_kt_k/2$ so it suffices to find $n_k$ such that:
\begin{eqnarray*}
n_k\frac{1-p_{k+1}}{p_{k+1}}+(p_{k+1}-1) \frac{n_kt_k}{2} +\frac{\ln c_{p_{k+1}}}{p_{k+1}} -2\ln(p_{k+1}-1) < -2\ln(k+1)  \\
n_k(p_{k+1}-1)\left(\frac{t_k}{2}-\frac{1}{p_{k+1}}\right) <  -\frac{\ln c_{p_{k+1}}}{p_{k+1}} + 2\ln(p_{k+1}-1) -2\ln(k+1) 
\end{eqnarray*}
Which is equivalent to
\begin{eqnarray*}
n_k > \frac{-\frac{\ln c_{p_{k+1}}}{p_{k+1}} + 2\ln(p_{k+1}-1) -2\ln(k+1) }{(p_{k+1}-1)(\frac{t_k}{2}-\frac{1}{p_{k+1}})} \\
n_k > 2\frac{\ln c_{p_{k+1}} - 2p_{k+1}\ln(p_{k+1}-1) + 2p_{k+1}\ln(k+1)}{(p_{k+1}-1)(2 - p_{k+1}t_k)}.
\end{eqnarray*}
Keeping in mind that $c_{p_{k+1}}\approx \frac{1}{(p_{k+1}-1)^2}$(the constant that gives comparability between the martingale and square function, squared) we see that our initial choice of $n_k$ satisfies this requirement.
This concludes the proof of the $L\log L$ integrability of the martingale $F_k$.
\end{proof}

\subsection{Properties}

We now list the main properties of the random measure $\nu = \lim_k \nu_k$ (see (\ref{definitionmeasure}) for the definition).
\begin{theorem}\label{MeasureProperties}
The limiting measure satisfies:
\begin{itemize}
\item[(a)] Almost surely, for all intervals $I$, $\nu(I)>0$.
\item[(b)] Almost surely, for all intervals $I$ we have $\nu(I)\leq |I|^{a_k}$, where $a_k$ corresponds to the $t_k$ for which $|I|\in[2^{-n_k}, 2^{-n_{k-1}})$. In particular $\nu(I)\leq e^{-\sqrt{\log\frac{1}{|I|}}}$.
\item[(c)] For any subinterval $I$ the random variable $\nu(I)$ has all negative moments.
\end{itemize}

In particular, this measure is non-atomic and non-zero on any interval. 
\end{theorem}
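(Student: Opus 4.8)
I would establish the three items in the order (a), (b), (c), since (b) feeds directly into (c). For (a), fix a dyadic interval $I$. Exactly as for $F_k=\nu_k([0,1])$, the sequence $\nu_k(I)=\int_I e^{S_k}\,d\theta$ is a nonnegative martingale, and since $0\le\nu_k(I)\le F_k$ it is dominated by the maximal function $F^{*}\in L^1$ (the corollary to Theorem \ref{LLogLMomentEstimate}), hence uniformly integrable; it therefore converges in $L^1$ to $\nu(I)$ with $E[\nu(I)]=|I|>0$. The event $\{\nu(I)=0\}$ lies in the tail $\sigma$-algebra $\bigcap_m\sigma(a_K:|K|<2^{-m})$ of the Gaussian coefficients: for each fixed $m$ the coarse field $\sum_{|K|\ge 2^{-m}}\bigl(a_K\psi_K-\tfrac{t_K}{2}\psi_K^2\bigr)$ is a finite sum of continuous functions, so changing finitely many $a_K$ multiplies every $\nu_k(I)$, and hence $\nu(I)$, by a factor in $[e^{-M},e^{M}]$ for an a.s.\ finite $M$; thus $\{\nu(I)=0\}$ is unaffected and, by Kolmogorov's zero--one law, has probability $0$ or $1$, hence $0$ since $E[\nu(I)]>0$. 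Taking the countable union over dyadic $I$ and noting that every interval contains a dyadic subinterval gives (a).

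\textbf{Part (b).} Here I would use the moment method. For a dyadic $I$ with $|I|\in[2^{-n_k},2^{-n_{k-1}})$, the same square-function computation as in the proof of Theorem \ref{LLogLMomentEstimate}, carried out for the martingale $\nu_\ell(I)$ in place of $F_\ell$, yields $E[\nu(I)^{p_k}]\le C|I|^{\tau_k}$ with the choice $p_k=1+\tfrac12 k^{-\gamma}$ used there and an exponent $\tau_k>1$ with $\tau_k-1\gtrsim k^{-\gamma}$. Then for each scale $n$ in block $k$,
\[
P\bigl(\exists\,I\in\mathcal{D}_n:\ \nu(I)>|I|^{a_k}\bigr)\le 2^{n}\,\frac{E[\nu(I)^{p_k}]}{|I|^{a_kp_k}}=C\,2^{-n(\tau_k-1-a_kp_k)},
\]
and choosing $a_k$ a fixed fraction of $(\tau_k-1)/p_k$ makes the exponent positive; because the block lengths $n_k-n_{k-1}$ grow super-exponentially (this is exactly why $n_k$ was chosen so large) these probabilities are summable in $n$, so Borel--Cantelli gives $\nu(I)\le|I|^{a_k}$ simultaneously for all sufficiently small $I$, almost surely. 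Finally, within block $k$ one has $\log\tfrac1{|I|}\ge n_{k-1}\log 2$, which is enormous compared with $a_k^{-2}\sim k^{2\gamma}$, so $a_k\log\tfrac1{|I|}\ge\sqrt{\log\tfrac1{|I|}}$ and $|I|^{a_k}\le e^{-\sqrt{\log(1/|I|)}}$; in particular $\nu$ is non-atomic.

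\textbf{Part (c), and the main obstacle.} This is the hardest part. To prove $E[\nu(I)^{-s}]<\infty$ for all $s>0$ it suffices to show that $P(\nu(I)<u)$ decays faster than any power of $u$ as $u\to 0$. I would adapt the splitting argument of \cite{AJKS09}: given small $u$, choose a level $m=m(u)$ so that $I$ contains $M:=2^m|I|\to\infty$ dyadic subintervals of size $2^{-m}$, retain every third of them, $I_1,\dots,I_{M'}$ with $M'\asymp M$ and the triples $3I_j$ disjoint, and bound $\nu(I)\ge\sum_j\nu(I_j)$. Conditioning on the field at scales $\ge 2^{-m}$, each $\nu(I_j)$ is at least $e^{-\Lambda_j}$ times a measure $\tilde\nu^{(j)}(I_j)$ built only from vaguelets centred in $3I_j$, where $\Lambda_j$ collects the ``leakage'' of the finer vaguelets centred outside $3I_j$; by Lemma \ref{VagueletLemma} this leakage has \emph{uniformly bounded variance} on $I_j$, hence (Borell--TIS) uniformly bounded exponential moments, while the $\tilde\nu^{(j)}(I_j)$ are genuinely independent. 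Using the Paley--Zygmund inequality with the $p_k$-th moment bound from (b), each $\tilde\nu^{(j)}(I_j)$ is of its expected order with probability bounded below; since a sum of $M'$ nonnegative terms is $<u$ only if every term is, one obtains $P(\nu(I)<u)\le\rho(u)^{\,M'(u)}$ with $\rho(u)<1$, and balancing $m(u)$ against $u$ makes this decay faster than any power, whence all negative moments and, in particular, a.s.\ positivity of $\nu$ on every interval. The delicate point — where the vaguelet representation does real work — is the decoupling: unlike the white noise of \cite{AJKS09}, the vaguelets are not compactly supported, so one must quantify, uniformly as $M'\to\infty$, the interaction between the (growing number of) sub-interval contributions and the common coarse-field normalisation, and it is precisely the decay estimates of Lemmas \ref{DecayOfVaguelet} and \ref{VagueletLemma} that keep the leakage terms under control. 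Combining (a)--(c) yields the final assertion that $\nu$ is non-atomic and almost surely non-zero on every interval.
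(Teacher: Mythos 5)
Your parts (a) and (b) are essentially the paper's arguments. For (a) the paper likewise combines the zero-one law with uniform integrability of the martingale $\nu_k(I)$, and your domination by $F^*$ is a clean way to justify that. For (b) the paper also does Markov plus Borel--Cantelli over dyadic intervals (shifting the grid by $1/3$ to reach all intervals), but one imprecision in your writeup deserves flagging: the claim that one can bound $E[\nu(I)^{p_k}]$ for the fixed exponent $p_k=1+\tfrac12 k^{-\gamma}$ is not quite right, because the martingale increments at levels beyond block $k$ carry variances $t_i > t_k$ for which $p_k$ is supercritical, and the $L^{p_k}$ moment of the full limit is not controlled. The paper avoids this by splitting $P(\nu(I)>|I|^{a_k}) \le P(\nu_k(I)\ge |I|^{a_k}/2) + \sum_{i\ge k+1} P(|\nu_{i+1}(I)-\nu_i(I)|\ge |I|^{a_k}2^{k-i})$ and using a \emph{scale-dependent} exponent $p_i$ for each increment; you need that level-by-level decomposition, not a single moment of $\nu(I)$.

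The real gap is in part (c). Your one-shot argument — split $I$ into $M'\to\infty$ separated subintervals $I_j$ of size $2^{-m(u)}$, use Paley--Zygmund on each local measure $\tilde\nu^{(j)}(I_j)$, and multiply the failure probabilities — does not account for the common coarse field at scales $\ge 2^{-m}$. You only absorb the ``leakage'' of fine vaguelets centred outside $3I_j$ into $\Lambda_j$, which indeed has bounded variance by Lemma \ref{VagueletLemma}; but $\nu(I_j)$ is also multiplied by $e^{S_{\mathrm{coarse}}}$, the exponentiated field at scales between $|I|$ and $2^{-m}$. That field has variance of order $m$ and mean of order $-m$, so $e^{S_{\mathrm{coarse}}}$ is typically of size $e^{-cm}$ even though its expectation is $1$. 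This common prefactor multiplies \emph{all} the $\tilde\nu^{(j)}(I_j)$, so the $\nu(I_j)$'s are not even close to independent, and the event $\{\nu(I)<u\}$ is driven largely by the event that $\inf_I S_{\mathrm{coarse}}$ is very negative. If you try to patch this by conditioning on $S_{\mathrm{coarse}}$ and paying the Gaussian tail $e^{-\lambda^2/(2\sigma_m^2)}$ with $\sigma_m^2\sim m$, you are forced to take $m\sim\log(1/u)$ to keep the Paley--Zygmund step meaningful, and then the coarse-field tail contributes only a \emph{polynomial} factor $u^{c}$, giving finitely many negative moments rather than all of them. The paper sidesteps exactly this by a recursion of bounded depth: it splits $[0,1]$ only three dyadic levels down, into $[1/4,3/8]$ and $[5/8,3/4]$, absorbs the (now bounded-variance) coarse field, leakage, and variance-mismatch corrections into the variables $X,X_i,Y_i,Z_i$, and writes $M \ge e^{X}e^{X_i}e^{Y_i}e^{Z_i}\tfrac18 M_i$ with $M_1,M_2$ independent copies of $M$. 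This yields the Laplace-transform inequality $\mathcal{L}(s^2)\le C/s+\mathcal{L}^2(s)$, whose iteration is what replaces the ``go deep in one step'' that costs you the unbounded coarse-field variance. The Paley--Zygmund input (Lemma \ref{PrimitiveEstimate}) is used by the paper only once, to seed the recursion with $\mathcal{L}(s_0)<1$, not term-by-term over $M'$ subintervals. So for (c) you need to replace the one-shot decomposition by the bounded-depth recursion.
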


\begin{proof} We begin by arguing that the measure is a.s. non-zero on each interval. Consider an interval $I$. The event $\nu(I) = 0$ is  independent of the behavior of any finite number of levels in the GFF and hence a tail event. Since the martingale $\nu_k(I)$ is uniformly integrable we must have $P(\nu_k(I) = 0) = 0$ by Kolmogorov's zero-one law. Thus the measure $\nu$ is almost surely non-zero on the interval $I$. Since there is a countable number of dyadic intervals we get that almost surely the measure is non-zero on any such interval $I$. As any other interval contains a dyadic interval we get $(a)$. 

We prove $(b)$ by first showing that given a dyadic interval $I$ with $|I| = 2^{-n}$ and $n\in[2^{-n_k}, 2^{-n_{k-1}})$ , then $$ P(\nu(I)>|I|^{a_k})\leq |I|^{1+\epsilon_k}.$$

We have
\begin{eqnarray*}
P(\nu(I)>|I|^{a_k})\leq P(\nu_k(I)\geq |I|^{a_k}/2)+ \sum_{i= k+1}^{\infty} P(|\nu_{i+1}(I) - \nu_{i}(I)|\geq |I|^{a_k} 2^{k-i} )
\end{eqnarray*}
The terms in this sum can be bounded using the same estimates as in the proof of the uniform integrability. We obtain the following:
\begin{eqnarray*}
 P(\nu_k(I)\geq |I|^{a_k}/2)\leq 2^{p_k}|I|^{p_k- a_k p_k}2^{(p_k^2-p_k)\frac{n_kt_k}{2}} = 4 |I|^{\zeta_{p_k} - a_kp_k}
\end{eqnarray*}
We use the notation $\zeta_p := p - (p^2-p)\frac{t}{2}$. The power $p_k$ corresponds to the maximum variance, $t_k$, that appears in the definition of the measure $\nu_k$.

We can pick $a_k$ small enough to have $\zeta_{p_k} - a_kp_k = 1+ \epsilon_k$.

The other terms satisfy the following inequalities (by the same estimates as above):
\begin{eqnarray*}
 P(|\nu_{i+1}(I) - \nu_{i}(I)|\geq |I|^{a_k} 2^{k-i} )\leq  C_{p_i} 2^{(i-k)p_i}|I|^{p_i - a_k p_i} 2^{(n_i-n)(1-p_i)}2^{(p_i^2-p_i)\frac{n_it_i}{2}}.
\end{eqnarray*}
We now check that for each $i\geq k+1$ we have:
\begin{eqnarray*}
C_{p_i} 2^{(i-k)p_i}|I|^{p_i - a_k p_i} 2^{(n_i-n)(1-p_i)}2^{(p_i^2-p_i)\frac{n_it_i}{2}} \leq 2^{i-k} 2^{-n_k(1+\epsilon_k)}
\end{eqnarray*}
It suffices to have these inequalities for $n=n_k$:
\begin{eqnarray*}
C_{p_i} 2^{(i-k)p_i}2^{-n_k(p_i - a_k p_i)} 2^{(n_i-n_k)(1-p_i)}2^{(p_i^2-p_i)\frac{n_it_i}{2}} \leq 2^{i-k} 2^{-n_k(1+\epsilon_k)}
\end{eqnarray*}
This is equivalent to the following relation on $n_i$ ($i\geq k+1$):
\begin{eqnarray*}
(i-k)p_i -n_k(p_i-a_kp_i) +(n_i-n_k)(1-p_i) + (p_i^2-p_i)\frac{n_it_i}{2} \leq i-k -n_k(1+\epsilon_k)\\
i(p_i-1) + n_i(1-p_i) + (p_i^2-p_i)\frac{n_it_i}{2}\leq k(p_i-1) - n_k(\epsilon_k  + a_kp_i)
\end{eqnarray*}
For our choice of $n_i$ these relations are satisfied. To conclude the proof of part $(b)$ we consider the following (with $|I| = 2^{-n}$) 
\begin{eqnarray*}
\sum_{I} |I|^{1+\epsilon_k} = \sum_k\sum_{n=n_k}^{n_{k+1}-1} |I|^{-1}|I|^{1+\epsilon_k} \leq \sum_k 2^{-n_k\epsilon_k}\frac{1}{1-2^{-\epsilon_k}}<\infty 
\end{eqnarray*}
If we shift the dyadic grid by $1/3$ we get good estimates for all intervals and then Borel-Cantelli implies $(b)$.

We now turn our attention to the existence of negative moments (part $(c)$). We will address this by estimates on the Laplace transform of the measure $\nu$ in the vein of \cite{AJKS09}. We start by analyzing $\nu([0,1])$ and obtain estimates on the Laplace transform by finding a recurrence relation. 

Ideally, we would write 
\begin{eqnarray}
\nu([0,1])\geq B (M_1+M_2)
\end{eqnarray}
where $M_1$ and $M_2$ are independent and have the same law as $\nu([0,1])$, up to scaling. $\nu([1/4,3/8])$ and $\nu([5/8,3/4])$ would be candidates. 

Unfortunately, the two candidates are not independent and the law of $\nu([1/4,3/8])$ differs from that of $\nu([0,1])$. The differences are:
\begin{itemize}
\item The definition of $\nu([1/4,3/8])$ contains vaguelets on levels $\{0,1,2\}$ which are above (in dyadic tree sense) $[1/4,3/8]$.
\item The variances appearing in $\nu([1/4,3/8])$ are slightly different from the ones in the definition of $\nu([0,1])$ relative to the corresponding interval. 
\item Modulo the first two differences, the law of $\nu([1/4,3/8])$ differs from the law of $\nu([0,1])$, by a factor of $1/8$ due to the change of variable that maps $[1/4,3/8]$ to $[0,1]$.
\end{itemize}

We start from the two candidates and decouple them in such a way that we get to our goal. 

First consider $$X := \inf_{\theta\in[0,1]}\sum_{|J|\geq 2^{-2}}(a_J\psi_J(\theta) - \frac{t_J}{2}\psi_J^2(\theta)).$$
Then $\nu([1/4,3/8])\geq e^X \tilde{\nu}([1/4,3/8])$ and $\nu([5/8,3/4])\geq e^X \tilde{\nu}([5/8,3/4])$, where the tilde stands for the measures constructed starting only with dyadic intervals of size less than $2^{-3}$.
It is easy to see that $X$ satisfies the same distributional inequality as the one given in lemma \ref{GaussianFieldLemma}.
Secondly, denote $I_1 = [1/4,3/8]$ and $I_2 = [5/8,3/4]$ and consider the measure $\nu_i(I_i)$ formed by using only the vaguelets starting with dyadic level $3$ and corresponding to intervals $J\subset 3I_i$.
Define also
\begin{equation*}
X_i := \inf_{\theta\in I_i}\sum_{J\not\subset 3I_i,|J|\leq |I_i|}(a_J\psi_J(\theta) - \frac{t_J}{2}\psi_J^2(\theta)).
\end{equation*}
Then
$$\tilde{\nu}(I_i)\geq e^{X_i}\nu_i(I_i)$$

The random variable $X_i$ can be written as
\begin{equation*}
X_i = \inf_{\theta\in I_i}(F_i(\theta) - F_i(\theta_0))  +F_i(\theta_0) - \sup_{\theta\in I_i} \sum_{J\not\subset 3I_i,|J|\leq |I_i|}\frac{t_J}{2}\psi_J^2(\theta). 
\end{equation*}
where $ F_i(\theta_0) = \sum_{|J|\not\subset 3I_i}a_J\psi_J(\theta_0)$. This term is a Gaussian random variable with mean zero and variance $\sum_{|J|\not\subset 3I_i} \psi_J^2(\theta_0)$ which is bounded by a universal constant $C_0$ according to lemma \ref{VagueletLemma}. The last term is also bounded by $C_0$, while the first is controlled by lemma \ref{GaussianFieldLemma}.
Then
$$P(X_i\geq \centering \lambda)\leq c(1+\lambda)e^{-\lambda^2/2}$$

The random variables $\nu_i(I_i)$ are now independent, but they are formed only using the vaguelets corresponding to $3I_i$ and thus are quite different from $\nu([0,1])$. We make them more similar by the following argument. Let
\begin{equation*} 
Y_i := -\sup_{\theta\in I_i}\sum_{J\not\subset 3I_i, |J|\leq |I_i|}(b_{J,i}\psi_J(\theta) - \frac{t_J}{2}\psi_J^2(\theta)).
\end{equation*}
We take the random variables $b_{J,i}$ to be $N(0,t_J)$ and independent of $a_J$ and of one another. Thus $Y_1$ is independent of $Y_2$. This allows us to write:
$$\nu_i(I_i)\geq e^{Y_i}\nu_{i,0}(I_i)$$
where $\nu_{i,0}$ is defined by using all intervals $J\subset[0,1]$ and, when $J\not\subset 3I_i$, by using the random variables $b_{J,i}$.
The two random variables $\nu_{1,0}(I_1)$ and $\nu_{2,0}(I_2)$ and independent. 
It is immediate that $Y_i$ behave exactly like $X_i$:
$$P(Y_i\geq \centering \lambda)\leq c(1+\lambda)e^{-\lambda^2/2}$$
Finally, we consider 
$$Z_i := \inf_{\theta\in I_i}\sum_{k=1}^{\infty}\sum_{l=0}^2\sum_{|J|=2^{-n_k -l}}(\tilde{a}_{i,J}\psi_J(\theta) - \frac{t_k-t_{k-1}}{2}\psi_J^2(\theta))$$
where $a_{i,J}\sim N(0,t_k-t_{k-1})$.
The distributional properties of the field $Z_i$ are the same as for $X_i$ and $Y_i$ by using the same argument in lemma \ref{GaussianFieldLemma} and observing that the variances form a telescoping series.

We then have 
$$\nu_{i,0}(I_i)\geq e^{Z_i}\nu_{i,1}(I_i)$$
where $\nu_{i,1}(I_{i})$ are independent and have the same law as $\frac{1}{8}\nu([0,1])$.

All this work takes care of the differences we mentioned above. We are now in a position to write:
\begin{eqnarray}\label{NegativeMomentRecurrence}
M \geq e^Xe^{X_1}e^{Y_1}e^{Z_1} \frac{1}{8}M_1 +e^Xe^{X_2}e^{Y_2}e^{Z_2} \frac{1}{8}M_2
\end{eqnarray}
where $M,M_1,M_2$ have the law of $\nu([0,1])$ and the last two are independent of one another. 

Let $\mathcal{L}(s) =E[e^{-sM}] $ be the Laplace transform of $M$. The recurrence relation (\ref{NegativeMomentRecurrence}) implies 
\begin{equation}
\mathcal{L}(s^2)\leq \frac{C(X,X_i,Y_i,Z_i)}{s} +\mathcal{L}^2(s)
\end{equation}
where $C(X,X_i,Y_i,Z_i)$ is a constant which depends on the corresponding variables. However, since all the variables $X,X_i,Y_i,Z_i$ behave like Gaussians with mean zero and variance $<8$, we won't worry about them.
Setting $f(s) = \frac{c}{s^{1/2}} + \mathcal{L}(s)$ we get $f(s^2)\leq f^2(s)$.
We claim that there is an $s_0$ such that $f(s_0)\leq 1-\epsilon < 1$.

It suffices to find $s_0$ such that $\mathcal{L}(s_0)$ is less than 1.

We begin with some estimates (using notation $M = F_\infty$):
\begin{eqnarray}
P(M<1/4) \leq  P(F_1 < 1/2) + P(F_1>1/2, F_\infty <1/4)
\end{eqnarray}
The first term is bounded using (\ref{LpDominatingEstimate}) and lemma \ref{PrimitiveEstimate}:
\begin{equation}
P(F_1 < 1/2) \leq 1- \frac{1}{2^{\frac{p}{p-1}}E[|F_1|^p]^{{\frac{1}{p-1}}}}\leq 1-\left(\frac{2^{(p-1)(1-\frac{pt}{2})}-1}{C2^p}\right)^{\frac{1}{p-1}}
\end{equation}
where $p$ is the value which corresponds to the variance $t<t_c$ appearing in the definition of $F_1$. We denote the bound by $1-4\epsilon$.

The second term can be bounded by
\begin{eqnarray}
P(F_1>1/2, F_\infty <1/4) \leq \sum_{i=2}^{\infty} P(|F_i-F_{i-1}|>1/4^{i-1})
\end{eqnarray}
and this in turn is small in comparison to $P(F_1 > 1/2)$  by the proof of theorem \ref{LLogLMomentEstimate} and the choice of the $\{n_k\}$.

All this implies $P(M<1/4)\leq 1- 3\epsilon$.
We now pick $s_0$ sufficiently large to have $e^{-s_0/4}<\epsilon$ and $\frac{c}{s_0^{1/2}}<\epsilon$. Then we have $f(s_0)<1-\epsilon$. We iterate and get $f(s_0^{2^k})\leq (1-\epsilon)^{2^k}$ and by monotonicity $f(s)\leq Cs^{-\delta}$, where $C < s_0^\delta$ and $\delta\sim -\frac{\ln(1-\epsilon)}{\ln (s_0)}$.

It follows that 
\begin{eqnarray}
E[M^{-\delta/2}]\leq \frac{\delta}{2e}\int_0^\infty s^{\delta/2-1} E[e^{-sM}] ds\leq    s_0^\delta
\end{eqnarray}

We finish the proof as in \cite{AJKS09} by bootstrapping using the inequality between the geometric and arithmetic mean. We get 
\begin{equation}
E[M^{-2q}]\leq C_{neg}E[M^{-q}]^2
\end{equation}
The constant $C_{neg}$ comes from the negative $q$ moments of the variables $X,X_i,Y_i,Z_i$.  It is a universal constant. 

To bound the first negative moment we need to do this operation $\sim\log_2 \frac{1}{\delta}$ times. We get the bound:
\begin{equation}\label{NegativeMomentEstimate}
E[M^{-1}]\leq \left(C_{neg} s_0^\delta\right)^\frac{1}{\delta} = C_{neg}^{1/\delta} s_0
\end{equation}
We give a summary of the variables that are relevant for this inequality: 
\begin{equation}
\epsilon\sim \left(\frac{(p-1)(1-pt/2)\ln2}{C2^p} \right)^{\frac{1}{p-1}}, s_0\sim\frac{1}{\epsilon^2}, \delta \sim \frac{\ln(1-\epsilon)}{\ln\epsilon}
\end{equation}
If we construct the measure $\nu$ starting with variance $t_k = 2-k^{-\gamma}$ the bound reads: 
\begin{equation}
\epsilon\sim k^{-3\gamma k^\gamma}, s_0\sim  k^{6\gamma k^\gamma}, \delta \sim  \frac{k^{-3\gamma k^\gamma}}{k^\gamma}
\end{equation}
As we take $t_k\rightarrow t_c$ this bound blows up. This points to the fact that as we approach criticality the measures become more and more concentrated. 

To obtain the negative moments of $\nu(I)$ for some smaller interval one decouples the levels "above" $I$. These levels will form a centered Gaussian field which behaves like a Gaussian variable with variance $\sim \log_2 \frac{1}{|I|}$ and mean given by a sum of squares of vaguelets. For the decoupled part one applies the analysis here keeping in mind that the first variance has changed and hence the bounds have increased.  
\end{proof}

The next lemma appears in \cite{L05}, but we give it here for the sake of completeness.
\begin{lemma} \label{PrimitiveEstimate}
 Let $X$ be a positive random variable with $E[X] = 1$ and $E[X^p]<\infty$ for some $p>1$. Then
\begin{equation}
P(X>\frac{1}{2})^{p-1}\geq \frac{1}{2^pE[X^p]}
\end{equation}
\end{lemma}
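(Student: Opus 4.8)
The plan is to split the expectation $E[X] = 1$ according to whether $X \leq 1/2$ or $X > 1/2$, and to control the large-$X$ part via Hölder's inequality against the $L^p$ bound. Write
\begin{equation*}
1 = E[X] = E[X \mathbf{1}_{\{X \leq 1/2\}}] + E[X \mathbf{1}_{\{X > 1/2\}}].
\end{equation*}
The first term is at most $\tfrac{1}{2} P(X \leq 1/2) \leq \tfrac12$, so the second term is at least $\tfrac12$. Hence $E[X \mathbf{1}_{\{X > 1/2\}}] \geq \tfrac12$.

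Next I would apply Hölder's inequality to the second term with conjugate exponents $p$ and $q = p/(p-1)$:
\begin{equation*}
E[X \mathbf{1}_{\{X > 1/2\}}] \leq \left(E[X^p]\right)^{1/p} \left(E[\mathbf{1}_{\{X > 1/2\}}]\right)^{1/q} = \left(E[X^p]\right)^{1/p} P(X > 1/2)^{(p-1)/p}.
\end{equation*}
Combining the two displays gives $\tfrac12 \leq \left(E[X^p]\right)^{1/p} P(X > 1/2)^{(p-1)/p}$. Raising both sides to the power $p$ yields $2^{-p} \leq E[X^p] \, P(X > 1/2)^{p-1}$, which rearranges to the claimed inequality
\begin{equation*}
P\!\left(X > \tfrac12\right)^{p-1} \geq \frac{1}{2^p E[X^p]}.
\end{equation*}

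There is no real obstacle here: the only point to be careful about is that the trivial bound $E[X\mathbf{1}_{\{X\le 1/2\}}]\le \tfrac12 P(X\le 1/2)\le \tfrac12$ uses positivity of $X$, which is given, and that $E[X^p]<\infty$ guarantees the Hölder step is not vacuous. The argument is a clean two-line application of Markov-type truncation plus Hölder, exactly the kind of ``primitive estimate'' its name suggests.
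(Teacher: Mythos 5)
Your proof is correct. The paper reaches the same intermediate inequality $E[X^p]\geq P(X>1/2)^{1-p}\,E[X\mathbf{1}_{\{X>1/2\}}]^p$ by conditioning on $\{X>1/2\}$ and applying Jensen's inequality to the conditional expectation, whereas you apply H\"older's inequality to $X\cdot\mathbf{1}_{\{X>1/2\}}$; these two arguments are equivalent and the rest of the reasoning (the truncation bound $E[X\mathbf{1}_{\{X>1/2\}}]\geq 1/2$ and the final rearrangement) is identical.
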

\begin{proof}
The proof is the same as in \cite{L05}. It is immediate that $E[X;x\geq 1/2]\geq 1/2$. Then
\begin{eqnarray*}
E[X^p] &\geq&  E[X^p; X>1/2]\\
   &=& P(X>1/2) E[X^p|X>1/2]\\
   &\geq& P(X>1/2) E[X|X>1/2]^p\\
   &\geq& P(X>1/2) \frac{E[X;X>1/2]^p}{P(X>1/2)^p}\\
   &\geq& \frac{1}{2^pP(X>1/2)^{p-1}}
\end{eqnarray*}
\end{proof}

\begin{lemma}\label{GaussianFieldLemma}
Let $I$ be a dyadic interval of size $2^{-i}$ and consider the Gaussian field $ F = \sum_{J\not\subset 3I, |J|\leq |I|}a_{J}\psi_J(\theta) $ with $a_J\sim N(0,t_J)$. Then there exist universal constants $c$ and $C$ such that for fixed $\theta_0$
\begin{equation}
P(\sup_{\theta\in I}|F(\theta)-F(\theta_0)| >C u )\leq c(1+u)e^{-u^2/2}.
\end{equation}

\end{lemma}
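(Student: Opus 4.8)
The plan is to read the statement as a concentration estimate for the centered Gaussian process $\theta\mapsto F(\theta)-F(\theta_0)$ on the compact interval $I$, and to derive it from the Borell--Tsirelson--Ibragimov--Sudakov (Borell--TIS) inequality. This requires controlling two scalars: the maximal pointwise variance $\sigma^2:=\sup_{\theta\in I}\operatorname{Var}\bigl(F(\theta)-F(\theta_0)\bigr)$ and the expected supremum $m_0:=E\bigl[\sup_{\theta\in I}(F(\theta)-F(\theta_0))\bigr]$. The key point I would establish is that both are bounded by universal constants, with \emph{no} dependence on the scale $2^{-i}$ of $I$ or on the variance sequence $\{t_J\}$.

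For the variance bound I would use that $t_J\le t_c=2$ for every $J$, so that
$\operatorname{Var}(F(\theta)-F(\theta_0))=\sum_{J\not\subset 3I,\,|J|\le|I|}t_J(\psi_J(\theta)-\psi_J(\theta_0))^2\le 8\bigl(\sum_J\psi_J^2(\theta)+\sum_J\psi_J^2(\theta_0)\bigr)$,
and invoke the second inequality of Lemma \ref{VagueletLemma}, which bounds each of these two sums by $C_0$ for $\theta,\theta_0\in I$ (this is the place where one uses that the reference point $\theta_0$ lies in $I$, as it does in the applications of this lemma). Hence $\sigma^2\le C_1$, universal.

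The expected supremum is the heart of the matter; I would bound it by Dudley's entropy integral in the canonical metric $d(\theta,\theta'):=\|F(\theta)-F(\theta')\|_{L^2}$. First I would prove the Lipschitz estimate $d(\theta,\theta')\le C_2\,2^{i}|\theta-\theta'|$ for $\theta,\theta'\in I$: writing $F(\theta)-F(\theta')=\sum_{j\ge i}\sum_{|J|=2^{-j},\,J\not\subset 3I}a_J(\psi_J(\theta)-\psi_J(\theta'))$, bounding each increment by $|\theta-\theta'|\sup_I|\psi_J'|$ via the mean value theorem, and applying the derivative decay of Lemma \ref{DecayOfVaguelet}. Since $J\not\subset 3I$ forces $\operatorname{dist}(J,\theta)\gtrsim 2^{-i}$ for $\theta\in I$, the $\sim 2^{j-i}$ intervals $J$ of size $2^{-j}$ lying in the $k$-th dyadic block of size $2^{-i}$ away from $I$ satisfy $|\psi_J'(\theta)|\lesssim 2^{j}/(k\,2^{j-i})^{q-1}$; squaring, summing over these $2^{j-i}$ intervals, over $k\ge1$ and over scales $j\ge i$, and taking the vaguelet decay exponent $q$ large enough (any $q>5/2$ suffices, with room to spare) makes every geometric and power series converge and gives $\sum_J t_J(\psi_J(\theta)-\psi_J(\theta'))^2\le C_2^2\,2^{2i}|\theta-\theta'|^2$. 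Together with the trivial bound $d\le\sqrt2\,\sigma$ this yields $d(\theta,\theta')\le C_2\min(2^i|\theta-\theta'|,\sqrt2\,\sigma)$, so the $d$-diameter of $I$ is $O(1)$ and $N(I,d,\eps)\lesssim 1/\eps$; Dudley's inequality then gives $m_0\le C_3\int_0^{O(1)}\sqrt{\log(C/\eps)}\,d\eps<\infty$, universal. Conceptually this is exactly the statement that deleting the intervals $J\subset 3I$ removes all the high-frequency content localized near $I$, so that after rescaling $I$ to unit length the process is, up to negligible periodization and bounded-variance corrections, the \emph{same} for every $i$.

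With $\sigma^2\le C_1$ and $m_0\le C_3$ in hand, I would apply Borell--TIS to $F(\cdot)-F(\theta_0)$ and to $F(\theta_0)-F(\cdot)$ to get $P\bigl(\sup_{\theta\in I}(\pm(F(\theta)-F(\theta_0)))>m_0+r\bigr)\le e^{-r^2/(2\sigma^2)}$ for $r\ge0$, take a union bound over the two signs, and choose a constant $C$ large compared to $m_0$ and $\sigma$, obtaining $P(\sup_{\theta\in I}|F(\theta)-F(\theta_0)|>Cu)\le 2\exp(-(Cu-m_0)_+^2/(2\sigma^2))\le c(1+u)e^{-u^2/2}$, where the factor $(1+u)$ and the constant $c$ simply absorb the bounded range of $u$ on which the Borell--TIS tail has not yet taken over. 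I expect the only genuine work to be the Lipschitz/entropy estimate of the third paragraph — in particular, verifying that the exclusion of $J\subset 3I$ makes $m_0$ scale-invariant so it does not grow with $i$; the bookkeeping for the exponent $q$ and the geometric sums is routine but has to be carried out with some care.
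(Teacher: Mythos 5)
Your proof is correct and follows essentially the same route as the paper: both establish a modulus-of-continuity estimate for the canonical Gaussian metric on $I$ that makes the $d$-diameter and entropy integral scale-free (the $2^i$ factor cancels against $|I|=2^{-i}$), and then conclude via Borell--TIS. The only differences are bookkeeping --- the paper uses the cross-term trick $|\psi_J(\theta)-\psi_J(\theta')|^2\le|\theta-\theta'|\,|\psi_J'(\theta^*)|\,(|\psi_J(\theta)|+|\psi_J(\theta')|)$ to get $E|F(\theta)-F(\theta')|^2\le C\,2^i|\theta-\theta'|$ (a H\"older-$1/2$ metric bound), whereas you square the mean-value estimate and obtain the stronger Lipschitz bound $d(\theta,\theta')\le C\,2^i|\theta-\theta'|$, and you make explicit the $\sigma^2$ bound from Lemma~\ref{VagueletLemma} and the Dudley entropy step that the paper leaves implicit in its phrase ``Borel-TIS inequality gives the desired estimate.''
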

\begin{proof}
The lemma is a consequance of the Borel-TIS inequality (see e.g. \cite{AJKS09}) for the field $F(\theta)-F(\theta_0)$.  We merely need to prove that $E[|F(\theta) - F(\theta')|^2]\leq L|\theta-\theta'|$ for $\theta, \theta' \subset I$. 
To this end observe that ($\theta^*\in [\theta, \theta']$ and depends on $J$):
\begin{eqnarray}
E[|F(\theta) - F(\theta')|^2] \leq \sum_{j=i}^\infty \sum_{|J|=2^{-j},J\not\subset 3I} t_J |\theta-\theta'||\psi_J'(\theta^*)|(|\psi_J(\theta)| + |\psi_J(\theta')|)\\
\leq C|\theta-\theta'| \sum_j \sum_{|J|=2^{-j},J\not\subset 3I} \frac{2^j}{(1+|2^j\theta^*-l|)^{q-1}} \frac{1}{(1+|2^j\theta-l|)^{q}}\\
\leq C |\theta-\theta'| \sum_j 2^j\sum_{|J|=2^{-j},J\not\subset 3I}  \frac{1}{(1+|2^jdist(J,I)|)^{2q-1}}\\
\leq  C |\theta-\theta'| \sum_j 2^j\sum_{d=2^{j-i}}^{2^j}  \frac{1}{(1+d)^{2q-1}}\leq C |\theta-\theta'| \sum_{j=i} \frac{2^j}{2^{(j-i)(2q-1)}}\\
\leq C|\theta-\theta'|2^i
\end{eqnarray}
since we have $q>1$. So $L= C2^i$. Since we are interested in the supremum over an interval of size $|I| = 2^{-i}$, Borel-TIS inequality gives the desired estimate.
\end{proof}

\section{Decoupling}
\label{SectionDecoupling}

In this chapter we prove that there are sequences $\rho_n,\tilde\rho_n, N_n, b_n, c_n$ such that:
\begin{equation*}
P(\sum_{i=1}^{N_n}Mod(G(A(z,\tilde\rho_n\rho_n^{i},2\tilde\rho_n\rho_n^{i}))) < c_n N_n)\leq \tilde\rho_n\rho_n^{(1+b_n)N_n} 
\end{equation*}
 for any $z\in \BBT$ and any mapping $G$ with Beltrami coefficient $\mu$.

Before we proceed we recall a few notations: in the following we will work with dyadic intervals $I,J$. $\mathcal{D}_n$ is the collection of dyadic intervals of size $2^{-n}$. For a dyadic interval $I$, $j(I)$ is the union of $I$ and it's two neighbors of the same size. We will extensively use the notation $C_I := \{(x,y)| x\in I, 2^{-n-1}\leq y \leq 2^{-n}\}$. These sets are called Whitney squares/boxes.

We now fix a point $z\in \BBT$ and $n$. For any $i\in{1,\ldots, N_n}$ define annuli $A_i :=A(z,\tilde\rho_n\rho_n^{i},2\tilde\rho_n\rho_n^{i})$, and balls $B_i := B(z, \tilde\rho_n\rho_n^{i})$, $B_i' := B(z, \tilde\rho_n\rho_n^{i}/4)$ . For simplicity of exposition, we will take $A_i$ to be square annuli. Because the picture is a local one, we think of the annuli as being centered at a point on $\BBR$.

The annulus $A_i $ can be divided in two parts:
\begin{itemize}
\item $R_{i,1} = A_i \cap \left(\cup_{I\in\mathcal{C}_i}C_I\right)$  , where $\mathcal{C}_i$ is the collection of all dyadic intervals $I$ of size at most $\tilde\rho_n\rho_n^i/4$ such that $I\cap A_i\neq \emptyset$.
\item $R_{i,2}=A_i\setminus R_{i,1}$
\end{itemize}

A big modulus $mod(G(A_i))$ is a consequence of controlled distortion in $A_i$. 

Distortion in any $C_I$ depends on the doubling properties of the random measure $\nu$ in $j(I)$. Let $I$ be a dyadic interval whose doubling properties affect the distortion in $A_{i}$ and any $\bold{J} = \{J_1,J_2\}$ with $J_i\subset j(I)$. 

For $k\in \{-1,0,\ldots, i-1\}$ define 
\begin{equation}
t_{i,k} := \sup_{8B_i} \sum_{ J: C_J\subset 16B_k, C_J\not\subset 16B_{k+1}}a_j\psi_J(\theta)- \inf_{8B_i}  \sum_{ J: C_J\subset 16B_k, C_J\not\subset 16B_{k+1}}a_j\psi_J(\theta)
\end{equation}
By lemma \ref{VagueletLemma} we have 
\begin{eqnarray*}
 \frac{\nu(J_1)}{\nu(J_2)} \leq C \frac{\nu_i(J_1)}{\nu_i(J_2)} e^{\sum_{k<i} t_{i,k}}
\end{eqnarray*}
where $\nu_i$ is the measure obtained only by using the dyadic intervals which are included in $16B_i$ and $C$ is a universal constant coming from the terms $e^{\sum_J t_J\psi^2_J/2}$.

The $t_{i,k}$ satisfy the following:
\begin{lemma}\label{DistribIneqt}
For any indices $i,i'$ the random variables $t_{i,k}$ and $t_{i',k'}$  are independent if $k\neq k'$, and satisfy
$$ 
P(t_{i,k}>Cu\sqrt{\rho_n^i \rho_n^{-(k+1)}} = C u \rho_n^{(i-k-1)/2}) \leq c(1+u)e^{-u^2/c},\  k\in \{-1,0,\ldots , i-1\}
$$
where $c,C$ are universal constants
\end{lemma}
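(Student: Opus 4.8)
\emph{Plan.} The statement splits into an independence claim and a tail estimate, the first being the easy half. The Whitney boxes $C_J$ contained in $16B_0$ are partitioned by the shells $\{J:\,C_J\subset 16B_k,\ C_J\not\subset 16B_{k+1}\}$, $k\ge-1$, since the balls $16B_k$ decrease with $k$; hence for $k\neq k'$ the fields $\sum_{C_J\subset 16B_k\setminus 16B_{k+1}}a_J\psi_J$ and $\sum_{C_J\subset 16B_{k'}\setminus 16B_{k'+1}}a_J\psi_J$ are built from disjoint subfamilies of the independent Gaussians $\{a_J\}$, whatever $i,i'$ may be, and so $t_{i,k}$ and $t_{i',k'}$ are independent. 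For the tail estimate I would follow the proof of Lemma \ref{GaussianFieldLemma}: write $F_{i,k}(\theta)=\sum_{C_J\subset 16B_k,\ C_J\not\subset 16B_{k+1}}a_J\psi_J(\theta)$, fix $\theta_0\in 8B_i$, and note $t_{i,k}\le 2\sup_{\theta\in 8B_i}|F_{i,k}(\theta)-F_{i,k}(\theta_0)|$, a supremum of a centered Gaussian process. It then suffices to bound (i) the variance $\sigma^2:=\sup_{\theta\in 8B_i}E[|F_{i,k}(\theta)-F_{i,k}(\theta_0)|^2]$ and (ii) the increments $E[|F_{i,k}(\theta)-F_{i,k}(\theta')|^2]$ for $\theta,\theta'\in 8B_i$, and to feed these into the Borel--TIS inequality together with Dudley's entropy bound for $E[\sup]$.

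\emph{The key estimate.} The mechanism is a scale separation. Every vaguelet occurring in $F_{i,k}$ has $C_J\not\subset 16B_{k+1}$, which forces either $|J|\gtrsim\tilde\rho_n\rho_n^{k+1}$ (the box is too large to fit in $16B_{k+1}$) or $J$ to sit at distance $\gtrsim\tilde\rho_n\rho_n^{k+1}$ from $z$ (a finer box straddling the footprint of $16B_{k+1}$). Since $8B_i$ is a ball of radius $\lesssim\tilde\rho_n\rho_n^i\le\tilde\rho_n\rho_n^{k+1}$, Lemma \ref{DecayOfVaguelet} gives, for $\theta,\theta'\in 8B_i$, $|\psi_J(\theta)-\psi_J(\theta')|\lesssim|\theta-\theta'|\,2^{n_J}(1+d_J)^{-(q-1)}$, where $d_J$ is the number of scale-$|J|$ intervals separating $J$ from $8B_i$; and in both cases above the weight $2^{n_J}(1+d_J)^{-(q-1)}$ is $\lesssim(\tilde\rho_n\rho_n^{k+1})^{-1}$ — directly for the shell scales $|J|$ between $\tilde\rho_n\rho_n^{k+1}$ and $\tilde\rho_n\rho_n^{k}$, and for the finer boundary boxes because the extra power of $d_J$ more than compensates once $q$ is large. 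Summing $t_J|\psi_J(\theta)-\psi_J(\theta')|^2$ over $J$, using $t_J\le t_c$ and the convergence of the spatial sum for $q$ large, gives $E[|F_{i,k}(\theta)-F_{i,k}(\theta')|^2]\lesssim(\tilde\rho_n\rho_n^{k+1})^{-2}|\theta-\theta'|^2$, hence $\sigma^2\lesssim(\tilde\rho_n\rho_n^{k+1})^{-2}(\tilde\rho_n\rho_n^i)^2=\rho_n^{2(i-k-1)}\le\rho_n^{i-k-1}$. In the canonical metric $d(\theta,\theta')\lesssim(\tilde\rho_n\rho_n^{k+1})^{-1}|\theta-\theta'|$ the Dudley integral over $8B_i$ is $\lesssim\rho_n^{i-k-1}$ as well, and Borel--TIS, applied exactly as in Lemma \ref{GaussianFieldLemma}, yields $P(t_{i,k}>Cu\rho_n^{(i-k-1)/2})\le P(t_{i,k}>Cu\rho_n^{i-k-1})\le c(1+u)e^{-u^2/c}$, with room to spare in the power of $\rho_n$.

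\emph{Main obstacle.} The delicate point is the bookkeeping behind the weight bound: one must enumerate correctly all vaguelets entering $F_{i,k}$ — in particular the fine-scale ones hugging $\partial(16B_{k+1})$ — and verify that after summing the $(1+d_J)^{-(q-1)}$ decay of Lemma \ref{DecayOfVaguelet} both over the $\log_2(1/\rho_n)$ scales of the shell and over spatial offsets, what remains is an absolute constant rather than a factor growing with $i-k$ or with $n$. This is exactly where the freedom to take $q$ large in the construction of the vaguelets is spent. Everything after that is the same Borel--TIS computation already carried out for Lemma \ref{GaussianFieldLemma}.
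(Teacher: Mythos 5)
Your proof is correct and uses essentially the same mechanism as the paper's: express $t_{i,k}$ as (twice) the supremum of a centered Gaussian field over $8B_i$, bound the increment variance, and apply the Borel--TIS inequality. The independence claim you add (disjointness of the shell families $\{J:\,C_J\subset 16B_k,\ C_J\not\subset 16B_{k+1}\}$) is the right one, and the paper leaves it implicit. Two small points where you refine the paper's sketch: (1) The paper's field $X(\theta)$ is written as a sum over the shell \emph{scales} $|I|\in[16\tilde\rho_n\rho_n^{k+1},16\tilde\rho_n\rho_n^k]$, whereas the definition of $t_{i,k}$ is in terms of Whitney \emph{boxes} $C_J$, so the finer-scale vaguelets whose boxes straddle $\partial(16B_{k+1})$ also enter; your proof notices these and checks that the $(1+d_J)^{-(q-1)}$ decay (Lemma \ref{DecayOfVaguelet}) absorbs the larger $|J|^{-1}$ once $q$ is taken large enough. (2) The paper uses $|a^2-b^2|\le|a-b|(|a|+|b|)$ together with the mean value theorem to get an increment bound linear in $|\theta-\theta'|$, i.e.\ a H\"older-$1/2$ canonical metric, while you use the Lipschitz bound $|\psi_J(\theta)-\psi_J(\theta')|\lesssim |\theta-\theta'||\psi_J'|$ to get one quadratic in $|\theta-\theta'|$. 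Both produce a variance bound compatible with the stated threshold $C u\rho_n^{(i-k-1)/2}$; yours is in fact a bit tighter ($\sigma\lesssim\rho_n^{i-k-1}$ versus $\rho_n^{(i-k-1)/2}$), which is harmless. So the route is the same; your accounting is slightly more careful.
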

\begin{proof} Consider the Gaussian random field:
$$X(\theta) := \sum_{|I|=16\tilde\rho_n\rho_n^{k+1}}^{16\tilde\rho_n\rho_n^k}a_I \psi_I(\theta) - \sum_{|I|=16\tilde\rho_n\rho_n^{k+1}}^{16\tilde\rho_n\rho_n^k}a_I\psi_I(
\theta_0)$$
By the mean value theorem $E[(X(\theta)-X(\theta'))^2] \leq Ct_I|\theta-\theta'|\sum_{|I|=16\tilde\rho_n\rho_n^{k+1}}^{16\tilde\rho_n\rho_n^k} |I|^{-1} (|\psi_I(\theta)|+|\psi_I(\theta')|)$. Using the properties of the vaguelets this is less than $Ct_I |\theta-\theta'|\tilde\rho_n^{-1}\rho_{k+1}^{-1}$. We look at this random variable on the interval $\BBR\cap 8B_i$ which has length $8\tilde\rho_n\rho_n^i$. By applying the Borell-TIS inequality we get the desired distributional inequality for $\sup X(\theta)$.

The same inequality holds for the $\sup$ of the field where we use $-a_I$ in stead of $a_I$. The $\theta_0$ terms cancel and we get the conclusion.
\end{proof}

\subsection{Distortion in $R_{i,1}$}
The region $R_{i,1}$ has two pieces $R_{i,1}^l$ and $R_{i,1}^r$. We describe the decoupling for the left piece; the decoupling for the right piece is analogous.

Let $\mathcal{C}_{i,i}^l$ be the colection of all dyadic intervals $I$ which do not appear in any of the $t_{i,k}$, $I\subset 16B_i\cap \BBR$ and which satisfy one of the following conditions:
\begin{itemize}
\item $|I|\geq \tilde\rho_n \rho_n^i/4$
\item if $|I| < \tilde\rho_n \rho_n^i/4$ then $I \cap B_{i+1}=\emptyset$ and $I$ does not intersect the $\tilde\rho_n\rho_n^i/4$- neighborbood of $R_{i,1}^l$.
\end{itemize}

Let $tr_{i,i}^l :=\sup(\sum_{J\in \mathcal{C}_{i,i}^l}a_J\psi_J(\theta)) - \inf(\sum_{J\in \mathcal{C}_{i,i}^l}a_J\psi_J(\theta))$, where the $\sup$ and $\inf$ are over $\theta \in R_{i,1}^l\cap \BBR$. These variables have the following property:

\begin{lemma}\label{DistribIneqtrii}
The random variables $tr_{i,i}$ and $tr_{i',i'}$  are independent if $i\neq i'$, and satisfy
$$ P(tr_{i,i}>Cu) \leq c(1+u)e^{-u^2/c},
$$
where $c,C$ are universal constants
\end{lemma}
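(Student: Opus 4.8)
The statement to prove is Lemma \ref{DistribIneqtrii}: that the $tr_{i,i}$ are independent for distinct $i$ and satisfy a sub-Gaussian-type tail bound. The proof will closely parallel the proof of Lemma \ref{DistribIneqt}, so I will follow that template. First I would establish the independence. The random variable $tr_{i,i}^l$ is a supremum/infimum of the Gaussian field $\sum_{J\in\mathcal{C}_{i,i}^l}a_J\psi_J(\theta)$, where by construction $\mathcal{C}_{i,i}^l$ consists of dyadic intervals that do \emph{not} appear in any of the $t_{i,k}$, that sit inside $16B_i\cap\BBR$, and that are either of size $\geq\tilde\rho_n\rho_n^i/4$ or are well-separated from $B_{i+1}$ and from a neighborhood of $R_{i,1}^l$. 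The key point is that for $i\neq i'$ these index sets $\mathcal{C}_{i,i}^l$ and $\mathcal{C}_{i',i'}^l$ are disjoint: the size constraints together with the containments in $16B_i$ versus $16B_{i'}$ (which are nested annuli at exponentially separated scales $\tilde\rho_n\rho_n^i$ vs. $\tilde\rho_n\rho_n^{i'}$) and the exclusion of the intervals appearing in the $t_{i,k}$ force the relevant scales to be different. Since the $a_J$ are independent Gaussians indexed by the dyadic intervals, disjointness of the index sets gives independence of the fields and hence of $tr_{i,i}$ and $tr_{i',i'}$.

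Next I would prove the tail bound via the Borell--TIS inequality, exactly as in Lemma \ref{DistribIneqt} and Lemma \ref{GaussianFieldLemma}. Writing $X(\theta)=\sum_{J\in\mathcal{C}_{i,i}^l}a_J\psi_J(\theta)$, I first need to control the increments: by the mean value theorem and the vaguelet decay estimates of Lemma \ref{DecayOfVaguelet},
\begin{equation*}
E[(X(\theta)-X(\theta'))^2]\leq C\,|\theta-\theta'|\sum_{J\in\mathcal{C}_{i,i}^l}t_J\,|J|^{-1}\bigl(|\psi_J(\theta)|+|\psi_J(\theta')|\bigr).
\end{equation*}
Here the crucial structural feature is that $R_{i,1}^l$ has physical length comparable to $\tilde\rho_n\rho_n^i$, and the intervals $J\in\mathcal{C}_{i,i}^l$ that are small (size $<\tilde\rho_n\rho_n^i/4$) are \emph{separated} from $R_{i,1}^l$ by at least $\sim\tilde\rho_n\rho_n^i/4$, so the vaguelet decay $(1+\mathrm{dist}/|J|)^{-q}$ with $q>1$ makes their contribution summable and bounded by a universal constant times $|\theta-\theta'|/(\tilde\rho_n\rho_n^i)$; the large intervals (size $\geq\tilde\rho_n\rho_n^i/4$) contribute $O(|\theta-\theta'|/(\tilde\rho_n\rho_n^i))$ as well, since there are boundedly many of them at each scale $\geq\tilde\rho_n\rho_n^i/4$ and their derivatives are bounded by $C/|J|\leq C/(\tilde\rho_n\rho_n^i)$. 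Thus $E[(X(\theta)-X(\theta'))^2]\leq L|\theta-\theta'|$ with $L=C(\tilde\rho_n\rho_n^i)^{-1}$. Taking the supremum over an interval of length $\sim\tilde\rho_n\rho_n^i$, the metric entropy / Dudley bound gives $E[\sup X]\leq C$ (a universal constant, because the scale $\tilde\rho_n\rho_n^i$ cancels). The Borell--TIS inequality then yields $P(\sup_{\theta\in R_{i,1}^l\cap\BBR}X(\theta)>E[\sup X]+u)\leq e^{-u^2/(2\sigma^2)}$, where $\sigma^2=\sup_\theta E[X(\theta)^2]$ is bounded by a universal constant (by the same argument as in Lemma \ref{VagueletLemma}, since the sum of $\psi_J^2$ over any such collection is bounded). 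Running the same estimate with $-a_J$ in place of $a_J$ controls $-\inf X$, and adding the two bounds (the arbitrary base-point terms cancel as in Lemma \ref{DistribIneqt}) gives $P(tr_{i,i}^l>Cu)\leq c(1+u)e^{-u^2/c}$. The same argument applies to the right piece $R_{i,1}^r$, and since $tr_{i,i}\leq tr_{i,i}^l+tr_{i,i}^r$ (or one takes the max over the two pieces), the claimed bound for $tr_{i,i}$ follows with adjusted universal constants.

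The main obstacle I anticipate is \emph{verifying that the scale-cancellation is genuine}, i.e. that the geometric constraints defining $\mathcal{C}_{i,i}^l$ — in particular the exclusion of intervals appearing in the $t_{i,k}$, the confinement to $16B_i$, and the separation from $B_{i+1}$ and the neighborhood of $R_{i,1}^l$ — really do force the variance $\sigma^2$ and the expected supremum $E[\sup X]$ to be bounded by \emph{universal} constants, uniformly in $n$ and $i$. The subtlety is that small intervals near but not intersecting $R_{i,1}^l$ contribute through vaguelet tails, and one must check the geometric separation is large enough (on the order of the interval length $\tilde\rho_n\rho_n^i$) to make these tails summable to a constant; this is where the definition of $\mathcal{C}_{i,i}^l$ was carefully arranged, and it is the one place the proof is not purely formal. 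Everything else is a routine application of Borell--TIS combined with Lemma \ref{DecayOfVaguelet} and the $\sum\psi_J^2=O(1)$ bound of Lemma \ref{VagueletLemma}.
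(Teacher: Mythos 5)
The paper states Lemma \ref{DistribIneqtrii} without giving a proof, so there is no paper argument to compare against directly; the clear authorial intent is that it follows by the same Borel--TIS template used for Lemmas \ref{GaussianFieldLemma}, \ref{DistribIneqt}, and \ref{DistribIneqtrik}. Your proposal carries that template out correctly: you establish independence from disjointness of the index sets $\mathcal{C}_{i,i}^l$, prove a Lipschitz-in-distance covariance increment bound $E[(X(\theta)-X(\theta'))^2]\leq C(\tilde\rho_n\rho_n^i)^{-1}|\theta-\theta'|$ using Lemma \ref{DecayOfVaguelet}, note the scale cancellation (domain of the supremum has length $\sim\tilde\rho_n\rho_n^i$) to get a universal bound on $E[\sup X]$ and on $\sigma^2$, and conclude via Borell--TIS, running the mirrored field to get the $\inf$. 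That is exactly the route the paper uses in the adjacent proofs, so your proposal is consistent with the paper's approach.

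One point to tighten if this were to be written out fully: the disjointness of $\mathcal{C}_{i,i}^l$ and $\mathcal{C}_{i',i'}^l$ for $i<i'$ is not completely automatic from the definitions alone. Small intervals in $\mathcal{C}_{i',i'}^l$ sit in $16B_{i'}\cap\BBR$ but are required to avoid only $B_{i'+1}$, while membership in $\mathcal{C}_{i,i}^l$ requires avoiding $B_{i+1}$; for $i'>i+1$ (and $\rho_n$ small) one has $16B_{i'}\subset B_{i+1}$ so disjointness is immediate, but for $i'=i+1$ the annulus $16B_{i+1}\setminus B_{i+1}$ is nonempty and one must invoke, in addition, the exclusion of intervals appearing in the $t_{i,k}$ and the exclusion near $R_{i+1,1}^l$ to rule out overlap. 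You flag in your last paragraph that verifying the scale-cancellation and separation is the one nonformal step; I would broaden that remark to include verifying the disjointness claim underlying independence, since both depend on the same delicate geometric bookkeeping in the definition of $\mathcal{C}_{i,i}^l$ (and on $\rho_n$ being small enough). With that caveat, the argument is sound and matches what the paper leaves implicit.
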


For each $k > i$ let $\mathcal{C}_{i,k}$ be the collection of dyadic intervals of length at most $\tilde\rho_n\rho_n^k$ (on $\BBR$) in $B_k\setminus B_{k+1}$ to which we add the dyadic intervals $J$ that are inside $B_k$ and intersect $B_{k+1}$, but are not contained in it. Let $tr_{i,k} :=\sup(\sum_{J\in \mathcal{C}_{i,k}}a_J\psi_J(\theta)) - \inf(\sum_{J\in \mathcal{C}_{i,k}}a_J\psi_J(\theta))$, where the $\sup$ and $\inf$ are over $\theta \in R_{i,1}$.
We may write then
\begin{eqnarray*}
 \frac{\nu(J_1)}{\nu(J_2)} \leq C_0\frac{\nu_{i,0}(J_1)}{\nu_{i,0}(J_2)} e^{\sum_{k<i}t_{i,k}} e^{\sum_{k\geq i}tr_{i,k}}
\end{eqnarray*}
for each $\bold{J} = \{J_1,J_2\}$ which influences the distortion in the region $R_{i,1}^l$. The random measure $\nu_{i,0}$ is obtained by considering only the random variables $a_J$ corresponding to the $J$ which are subsets of the $\tilde\rho_n\rho_n^i/4$- neighborbood of $R_{i,1}^l\cap \BBR$.

The terms of the form $\sum_J t_J\psi_J^2(\theta)$ cancel in the quotient $ \frac{\nu(J_1)}{\nu(J_2)}$ because of lemma \ref{VagueletLemma}.

We should actually consider the two parts of $R_{i,1}$ separately in the computations that follow. However, their behavior is very similar so we allow ourselves to treat them as a unit. See figure \ref{FigureRegion1}.

The random variables $tr_{i,k}$ are supremums of Gaussian fields and they behave like Gaussian variables. 
\begin{lemma}\label{DistribIneqtrik}
The random variables $tr_{i,k}$ and $tr_{i',k'}$  are independent if $k\neq k'$, and satisfy
$$ P(tr_{i,k}>Cu\rho_n^{(k-i)(q-1)}) \leq c(1+u)e^{-u^2/c},\  k > i
$$
where $c,C$ are universal constants
\end{lemma}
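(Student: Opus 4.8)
The plan is to reproduce the Borell--TIS scheme already used for Lemmas~\ref{DistribIneqt}, \ref{GaussianFieldLemma} and \ref{DistribIneqtrii}. First I would write $tr_{i,k}$ as the oscillation of a Gaussian field: set $X_k(\theta) := \sum_{J\in\mathcal{C}_{i,k}} a_J\psi_J(\theta)$, fix a reference point $\theta_0\in R_{i,1}$, and let $Y(\theta) := X_k(\theta)-X_k(\theta_0)$. The $\theta_0$-terms cancel in the difference $\sup-\inf$, so $tr_{i,k} = \sup_{R_{i,1}} Y + \sup_{R_{i,1}}(-Y)$, and it suffices to bound $\sup_{R_{i,1}}Y$ and $\sup_{R_{i,1}}(-Y)$ separately, each being a centered Gaussian supremum.

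For the independence statement I would observe that $\mathcal{C}_{i,k}$ depends on $k$ (and on $z,n$) but not on $i$, and that for $k\neq k'$ the families $\mathcal{C}_{i,k}$ and $\mathcal{C}_{i',k'}$ are disjoint as collections of dyadic intervals: every interval of $\mathcal{C}_{i,k}$ either lies in the annulus $B_k\setminus B_{k+1}$ or straddles $\partial B_{k+1}$ while sitting inside $B_k$, and similarly for $\mathcal{C}_{i',k'}$; since the annuli $B_k\setminus B_{k+1}$ are pairwise disjoint, a bridging interval at level $k$ is not contained in $B_{k+1}$, and every interval of $\mathcal{C}_{i',k'}$ with $k'>k$ is contained in $B_{k+1}$ (symmetrically for $k'<k$), no interval can lie in both families. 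Hence the Gaussian coefficients entering $tr_{i,k}$ and $tr_{i',k'}$ are disjoint and independent, and as $R_{i,1},R_{i',1}$ are deterministic, $tr_{i,k}$ and $tr_{i',k'}$ are independent.

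The heart of the matter is the increment estimate for $\theta,\theta'\in R_{i,1}$. Using independence of the $a_J$ and the mean value theorem (with $\theta^*$ between $\theta,\theta'$, depending on $J$),
\[
E\big[(X_k(\theta)-X_k(\theta'))^2\big] = \sum_{J\in\mathcal{C}_{i,k}} t_J\,(\psi_J(\theta)-\psi_J(\theta'))^2 \le |\theta-\theta'|\sum_{J\in\mathcal{C}_{i,k}} t_J\,|\psi_J'(\theta^*)|\,\big(|\psi_J(\theta)|+|\psi_J(\theta')|\big).
\]
Since $k>i$, every $J\in\mathcal{C}_{i,k}$ lies at distance $\gtrsim\tilde\rho_n\rho_n^i$ from $R_{i,1}$; writing $|J|=2^{-j}$, the number of intervals of length $2^{-j}$ separating $J$ from $R_{i,1}$ is then comparable to $D_j := \tilde\rho_n\rho_n^i\,2^j$, and Lemma~\ref{DecayOfVaguelet} gives $|\psi_J|\lesssim D_j^{-q}$ and $|\psi_J'|\lesssim 2^j D_j^{-(q-1)}$ on $R_{i,1}$. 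There being $\lesssim\tilde\rho_n\rho_n^k\,2^j$ intervals of $\mathcal{C}_{i,k}$ of a given size $2^{-j}\le\tilde\rho_n\rho_n^k$ (all at essentially this same distance, as the annulus $B_k\setminus B_{k+1}$ is thin; the $O(1)$ bridging intervals per scale contribute a term of the same order), using $t_J\le t_c$ and summing the geometric series in $j$ — which converges once the vaguelet decay exponent satisfies $q>3/2$, as we may arrange by the choice of the wavelet $\Phi$ — yields
\[
E\big[(X_k(\theta)-X_k(\theta'))^2\big] \lesssim |\theta-\theta'|\,(\tilde\rho_n\rho_n^i)^{-1}\,\rho_n^{(k-i)(2q-2)}.
\]
(In Lemma~\ref{GaussianFieldLemma} the contributing intervals range over all distance scales and only $q>1$ is needed, whereas here they all sit at the single distance scale $\tilde\rho_n\rho_n^i$, which forces the slightly faster decay.)

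Since $R_{i,1}$ has diameter $\lesssim\tilde\rho_n\rho_n^i$, the last display says the canonical metric of $Y$ has diameter $\lesssim\rho_n^{(k-i)(q-1)}$ and $\sup_{R_{i,1}}\operatorname{Var}(Y)\lesssim\rho_n^{2(k-i)(q-1)}$. Dudley's entropy bound then gives $E[\sup_{R_{i,1}}|Y|]\lesssim\rho_n^{(k-i)(q-1)}$, and the Borell--TIS inequality gives $P(\sup_{R_{i,1}}Y > Cu\,\rho_n^{(k-i)(q-1)})\le c\,(1+u)e^{-u^2/c}$, and likewise for $-Y$; adding these proves $P(tr_{i,k}>Cu\,\rho_n^{(k-i)(q-1)})\le c\,(1+u)e^{-u^2/c}$. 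I expect the main obstacle to be exactly the increment estimate: tracking the correct powers of $\tilde\rho_n$ and $\rho_n$ when pushing the vaguelet decay of Lemma~\ref{DecayOfVaguelet} through the sum over all scales present in $\mathcal{C}_{i,k}$, and noticing that one must take $q>3/2$ because every contributing interval sits at one and the same distance scale from $R_{i,1}$.
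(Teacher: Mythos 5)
Your proposal is correct and follows essentially the same path as the paper: decouple $tr_{i,k}$ from $tr_{i',k'}$ by disjointness of the index families $\mathcal{C}_{i,k}$, bound the canonical increment of the Gaussian field over $R_{i,1}$ via the vaguelet decay of Lemma~\ref{DecayOfVaguelet} and the mean value theorem, observe that the diameter of the pseudometric over the interval of length $\sim\tilde\rho_n\rho_n^i$ is $\sim\rho_n^{(k-i)(q-1)}$, and invoke Dudley/Borell--TIS. Your final increment bound $E[(X_k(\theta)-X_k(\theta'))^2]\lesssim|\theta-\theta'|(\tilde\rho_n\rho_n^i)^{-1}\rho_n^{(k-i)(2q-2)}$ matches the paper's $C|\theta-\theta'|(\tilde\rho_n\rho_n^k)^{2q-2}/(\tilde\rho_n\rho_n^i)^{2q-1}$ exactly.

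One small point where you do better than the paper: you carefully count $\sim\tilde\rho_n\rho_n^k\,2^j$ intervals of scale $2^{-j}$ in the thin annulus $B_k\setminus B_{k+1}$, all at the common distance scale $\tilde\rho_n\rho_n^i$, and correctly deduce that the geometric series in $j$ converges only if $q>3/2$. The paper's displayed computation silently drops this multiplicity (bounding the inner sum over distances by its first term, $(2^j\tilde\rho_n\rho_n^i)^{-(2q-1)}$) and then reads off a weaker constraint $q>1$ from the outer sum in $j$. The final answer is unaffected because the dominant term is at $j_0=-\log_2(\tilde\rho_n\rho_n^k)$, where the multiplicity is $O(1)$, and $q$ can in any case be chosen as large as one likes by choosing $\Phi$. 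So your version is a slightly more honest accounting of the same argument rather than a different approach.

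Your independence argument is also what the paper implicitly relies on (it does not spell it out in this lemma's proof). Your observation that a bridging interval at level $k$ is \emph{not} contained in $B_{k+1}$ (hence cannot appear in $\mathcal{C}_{i',k'}$ for $k'>k$) is the right way to close the potential overlap at the boundary between consecutive annuli.
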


\begin{figure}[htbp]
  \centering
    \includegraphics[width=0.8\textwidth]{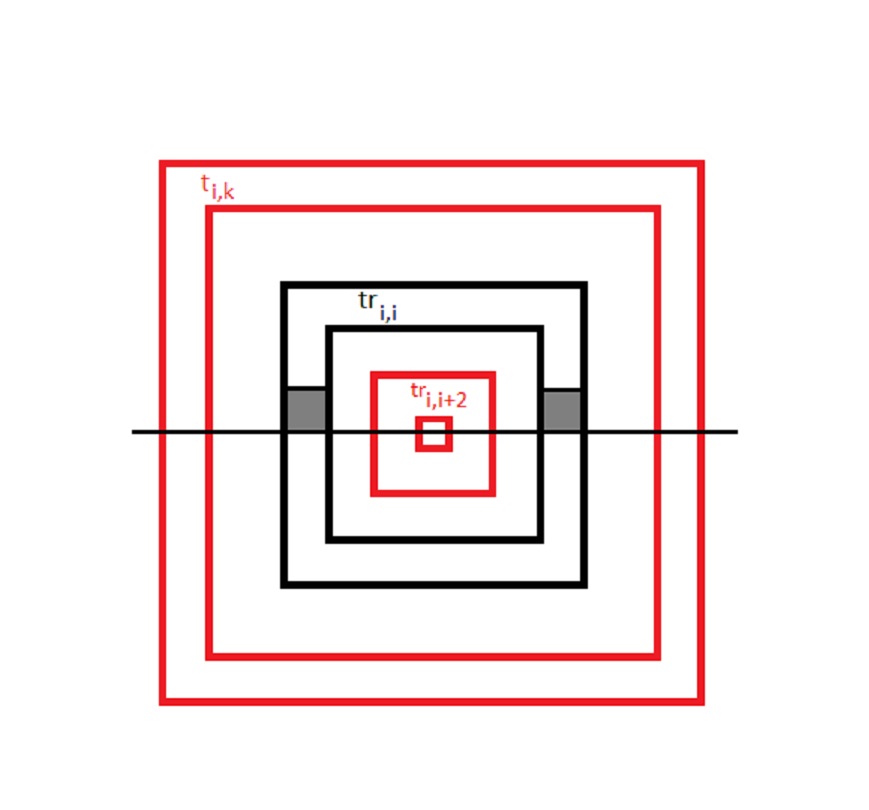}
\caption[Decoupling in $R_{i,1}$]{The decoupled variables used to control distortion in regions $R_{i,1}$(shaded)}
  \label{FigureRegion1}
\end{figure}

\begin{proof} The proof is very similar to the one of lemma \ref{GaussianFieldLemma}.

Consider the Gaussian random field: 
$$X(\theta) := \sum_{J\in \mathcal{C}_{i,k}}a_I\psi_I(\theta) - \sum_{J\in \mathcal{C}_{i,k}}a_I\psi_I(\theta_0) $$

Set $I = R_{i,1}^l\cap \BBR$. For any interval $J\in\mathcal{C}_{i,k}$ we have $dist(I,J)>\tilde\rho_n\rho_n^i/2$. We may now write ($\theta^*\in [\theta, \theta']$ and depends on $J$):
\begin{eqnarray}
E[|F(\theta) - F(\theta')|^2] \leq \sum_{j=-\log_2\tilde\rho_n\rho_n^k}^\infty \sum_{|J|=2^{-j},J\in\mathcal{C}_{i,k}} t_J |\theta-\theta'||\psi_J'(\theta^*)|(|\psi_J(\theta)| + |\psi_J(\theta')|)\\
\leq C|\theta-\theta'| \sum_j \sum_{|J|=2^{-j},J\in\mathcal{C}_{i,k}} \frac{2^j}{(1+|2^j\theta^*-l|)^{q-1}} \frac{1}{(1+|2^j\theta-l|)^{q}}\\
\leq C |\theta-\theta'| \sum_j 2^j\sum_{|J|=2^{-j},J\in\mathcal{C}_{i,k}}  \frac{1}{(1+|2^jdist(J,I)|)^{2q-1}}\\
\leq  C |\theta-\theta'| \sum_j 2^j\sum_{d=2^{j}\tilde\rho_n\rho_n^i/4}  \frac{1}{(1+d)^{2q-1}}\leq C |\theta-\theta'| \sum_{j=-\log_2\tilde\rho_n\rho_n^k} \frac{2^j}{(2^{j}\tilde\rho_n\rho_n^i)^{2q-1}}\\
\leq C|\theta-\theta'|\frac{(\tilde\rho_n\rho_n^k)^{2q-2}}{(\tilde\rho_n\rho_n^i)^{2q-1}}
\end{eqnarray}
since we have $q>1$. Since we are interested in the supremum over an interval of size $ \sim\tilde\rho_n\rho_n^i$, Borel-TIS inequality gives the desired estimate.

\end{proof}

The random measures $\nu_{i,0}$ are independent of one another. While we can not say that with high probability the distortion in the regions $R_{i,1}^l$ and $R_{i,1}^r$ is bounded (this was the case in \cite{AJKS09}), we will construct a stopping time region inside each of these where the distortion grows in a controlled fashion. 

\subsection{Distortion in $R_{i,2}$}

Let $\mathcal{I}_i$ be the set of $I\in\mathcal{D}$ such that $C_I$ intersects $A_i$ and $|I|\geq \tilde\rho_n\rho_n^i$. The distortion in $R_{i,2}$ is the same as the distortion in all $C_I$ for $I\in \mathcal{I}_i$. This is a finite (and universally bounded) number of intervals. For each of them we only need to control $\sim2^{10}$ pairs $\bold{J}$.  We have already decoupled the influence of variables $a_J$ with $J$ ouside of $16B_i$ on the measure $\nu$. For each pair $\bold{J}$ we can write (following \cite{AJKS09} and recalling that  $B_i' = B(z, \tilde\rho_n\rho_n^{i}/4)$)
\begin{eqnarray}
\delta_{\nu_i}(\bold{J}) = \delta_{\nu_i}(J_1\setminus B_i', J_2\setminus B_i') + \frac{\nu_i(J_1\cap B_i')}{\nu_i(J_2\setminus B_i')} +\frac{\nu_i(J_2\cap B_i')}{\nu_i(J_1\setminus B_i')}\\
\nu_i(J_j\cap B_i')=\sum_{k=i+1}^{N_n} \nu_i(J_j\cap B_{k-1}'\setminus B_{k}')
\end{eqnarray}
Define
\begin{eqnarray}
L_{i,i} = \sum_{(J_1,J_2)\in j(I), I\in \mathcal{I}_i}\delta_{\nu_i}(J_1\setminus B_i',J_2\setminus B_i')\\
L_{i,k} = \sum_{(J_1,J_2)\in j(I), I\in \mathcal{I}_i} \frac{\nu_i(J_1\cap (B_{k-1}'\setminus B_k'))}{\nu_i(J_2\setminus B_i')} + (1\leftrightarrow 2) \mbox{ for } i+1\leq k\leq N_i
\end{eqnarray}

An upper bound on $\sum_{k\geq i} L_{i,k}$ means the distortion in region $R_{i,2}$ is bounded. Before we give a distributional inequality for these random variables, we need to decouple them one more time. 
\begin{eqnarray}
 \frac{\nu_i(J_1\cap (B_{k-1}'\setminus B_k'))}{\nu_i(J_2\setminus B_i')} \leq  \frac{\nu_{i,k}(J_1\cap (B_{k-1}'\setminus B_k'))}{\nu_{i,k}(J_2\setminus B_i')} e^{\sup(\sum_J\ldots) - \inf(\sum_J\ldots)}
\end{eqnarray}
where measure $\nu_{i,k}$ is constructed using only the random variables $a_J$ for which $J\subset 16B_i$ and $J\not\subset \frac{1}{2}B_k'$. In addition, the $\sup$ and $\inf$ are considered over the set $\BBR\cap( 6B_i\setminus B_k')$. Denote these variables by $\sum_{j>k} s_{i,k,j}$, where $s_{i,j,k}$ involves only the dyadic intervals $J$ which are subsets of $\frac{1}{2}B_{j-1}'$, but are not subsets of $\frac{1}{2}B_{j}'$.

For simplicity of notation, we will use $L_{i,k}$ for the sums above, but involving $\nu_{i,k}$. Distortion will then be bounded by 
\begin{eqnarray}
\sum_{k\geq i} L_{i,k} e^{\sum_{j>k}s_{i,k,j}}
\end{eqnarray}
For a picture of the decoupled variables see Figure \ref{FigureRegion2}.

The random variables $s_{i,k,j}$ have similar properties as $t_{i,k}, tr_{i,k}$ above due to the same reason: they are supremums of Gaussian fields. 
\begin{lemma}\label{DistribIneqs}
The random variables $s_{i,k,j}$ and $s_{i',k', j'}$  are independent if $j\neq j'$, and satisfy
$$ P(s_{i,k,j}> u\rho_n^{(j-k)q/2}) \leq Ce^{-u^2/4},\  j > k >i
$$
where $C$ is a universal constant.
\end{lemma}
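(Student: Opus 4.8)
The plan is to repeat, almost verbatim, the Borell--TIS argument already used for the variables $t_{i,k}$ and $tr_{i,k}$ in Lemmas \ref{GaussianFieldLemma}, \ref{DistribIneqt} and \ref{DistribIneqtrik}: $s_{i,k,j}$ is once more the oscillation of a centered Gaussian field over an interval, so the proof reduces to identifying its independent pieces and estimating the canonical-metric size of the field. For the independence, observe that $s_{i,k,j}$ only involves the coefficients $a_J$ indexed by dyadic $J$ in the annular shell $\frac12 B_{j-1}'\setminus\frac12 B_j'$ about the common centre $z$ (the constraint $J\subset 16B_i$ is automatic, since this shell sits far inside $16 B_i$). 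For different $j$ the shells are disjoint, so the coefficient families entering $s_{i,k,j}$ and $s_{i',k',j'}$ are disjoint whenever $j\neq j'$, and independence follows because the $a_J$ are jointly independent Gaussians. For the tail estimate, first discard the deterministic terms: by Lemma \ref{VagueletLemma} and the vaguelet decay the contributions $\sum_J t_J\psi_J^2(\theta)/2$ from the shell have oscillation $O(1)$ over $\BBR\cap(6B_i\setminus B_k')$, so up to an additive constant $s_{i,k,j}=\sup F_j-\inf F_j$, where $F_j(\theta)=\sum_{J\subset\frac12 B_{j-1}',\, J\not\subset\frac12 B_j'}a_J\psi_J(\theta)$ and the $\sup$/$\inf$ run over $\BBR\cap(6B_i\setminus B_k')$; bound this by $2\sup_\theta|F_j(\theta)-F_j(\theta_0)|$ for a fixed base point $\theta_0$.

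Now feed Borell--TIS two inputs. First, the pointwise variance: every $J$ occurring in $F_j$ lies inside $\frac12 B_k'$ while every admissible $\theta$ lies outside $B_k'$, so $\mathrm{dist}(J,\theta)\gtrsim\tilde\rho_n\rho_n^k$, and the scales present satisfy $|J|\lesssim\tilde\rho_n\rho_n^{j-1}$; substituting the decay $|\psi_J(\theta)|\le C_q(1+\mathrm{dist}(J,\theta)/|J|)^{-q}$ of Lemma \ref{DecayOfVaguelet} into $\mathrm{Var}(F_j(\theta))=\sum_J t_J\psi_J^2(\theta)$, using $t_J\le t_c$ and $q>1$, and summing the geometric series over scales gives $\sup_\theta\mathrm{Var}(F_j(\theta))\lesssim\rho_n^{(j-k)q}$. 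Second, the increment, exactly as in the proof of Lemma \ref{DistribIneqtrik}: the mean value theorem together with the derivative bound $|\psi_J'(\theta)|\le C_q|J|^{-1}(1+\mathrm{dist}(J,\theta)/|J|)^{-(q-1)}$ yields $E[(F_j(\theta)-F_j(\theta'))^2]\le L|\theta-\theta'|$ with $L$ finite, so the field is a.s.\ continuous, the Dudley entropy integral over the evaluation interval converges, and combined with the variance bound this controls $E[\sup_\theta(F_j(\theta)-F_j(\theta_0))]$ by $\rho_n^{(j-k)q/2}$ times a lower-order factor. Borell--TIS then delivers $P(s_{i,k,j}>Cu\rho_n^{(j-k)q/2})\le C e^{-u^2/c}$, which is the assertion.

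The delicate point — the only place the proof does more than quote the earlier lemmas — is producing the geometric gain $\rho_n^{(j-k)q/2}$ rather than the trivial scale-invariant bound; this is exactly where the separation "the $J$'s lie deep inside $B_k'$ while the supremum is taken outside $B_k'$" is used, and it is what forces the vaguelet tails to decay. Some care is needed in the borderline case $j=k+1$, where the shell $\frac12 B_k'\setminus\frac12 B_{k+1}'$ abuts $\partial(\frac12 B_k')$ and the tails give only $\mathrm{Var}(F_j)=O(1)$: there $s_{i,k,j}$ is just the oscillation of a bounded-variance Gaussian field, still of the claimed form since $\rho_n^{(j-k)q/2}$ is then a fixed constant; and the logarithmic correction to $E[\sup]$ coming from the length of the evaluation interval ($\sim\tilde\rho_n\rho_n^i$, much longer than the separation $\sim\tilde\rho_n\rho_n^k$) grows only polynomially in the scale indices and is absorbed into the constants and into the later summation over $j$.
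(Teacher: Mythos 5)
Your proposal is not the paper's argument, and the difference is not cosmetic: the paper opens the proof of this lemma by stating explicitly that ``the usual argument involving the Borel-TIS lemma \ldots doesn't give a good estimate,'' and then deliberately replaces it with an elementary union bound. You are proposing exactly the argument the paper rejects.

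Concretely, the paper bounds $\sup_\theta\lvert\sum_J a_J\psi_J(\theta)\rvert$ by the $\theta$-independent random variable $\sum_J \lvert a_J\rvert\,\sup_\theta\lvert\psi_J(\theta)\rvert$, then splits the threshold $\lambda$ geometrically across dyadic scales $l\ge l_0$ and across the intervals $J$ at each scale, and applies a one-dimensional Gaussian tail bound to each $\lvert a_J\rvert$. The vaguelet decay $\lvert\psi_J(\theta)\rvert\lesssim (2^{l}2^{-l_1})^{-q}$ (distance to the evaluation set $\gtrsim 2^{-l_1}=\tilde\rho_n\rho_n^k$) is what makes this union bound summable and produces the factor $2^{-q(l_0-l_1)/2}=\rho_n^{(j-k)q/2}$. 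This completely sidesteps the issue of estimating $E[\sup_\theta F_j]$: there is no entropy integral, no Dudley, no concentration-around-the-mean step.

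Your Borell--TIS route requires two inputs, and it is the second one that is the sticking point the paper is warning about. The pointwise variance bound $\sup_\theta\mathrm{Var}(F_j(\theta))\lesssim\rho_n^{2q(j-k)-1}$ is fine (and is in fact stronger than you need). But to conclude $P(s_{i,k,j}>u\rho_n^{(j-k)q/2})\le Ce^{-u^2/4}$ with a \emph{universal} constant $C$, you must also show $E[\sup_\theta F_j(\theta)]\lesssim\rho_n^{(j-k)q/2}$. The evaluation set $\BBR\cap(6B_i\setminus B_k')$ has length $\sim\tilde\rho_n\rho_n^i$, which is a factor $\rho_n^{-(k-i)}$ longer than the correlation scale $\sim\tilde\rho_n\rho_n^k$; the Dudley integral therefore picks up a $\sqrt{\log}$ of a ratio that depends on $k-i$ and $j$, both of which are of order $N_n$ in the application, not $O(1)$. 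You wave this away (``grows only polynomially in the scale indices and is absorbed into the constants and into the later summation''), but the factor enters multiplicatively against $\sigma$ in the exponent shift, so it is \emph{not} absorbable into the universal constant in front of $e^{-u^2/4}$; if $E[\sup]$ exceeds $\rho_n^{(j-k)q/2}$ the Borell--TIS conclusion simply is not of the stated form. In the borderline case $j=k+1$, where $\sigma\sim\rho_n^{q-1/2}$ is only marginally smaller than the target $\rho_n^{q/2}$, this is a genuine obstruction, and this is precisely what the paper's opening sentence of the proof is pointing at. You should instead prove the lemma by the union bound over $(l,J)$ with geometric splitting of $\lambda$, which requires only the one-dimensional Gaussian tail and the decay of $\lvert\psi_J\rvert$, and avoids $E[\sup]$ entirely.
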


\begin{figure}[h]
   \centering
    \includegraphics[width=0.8\textwidth]{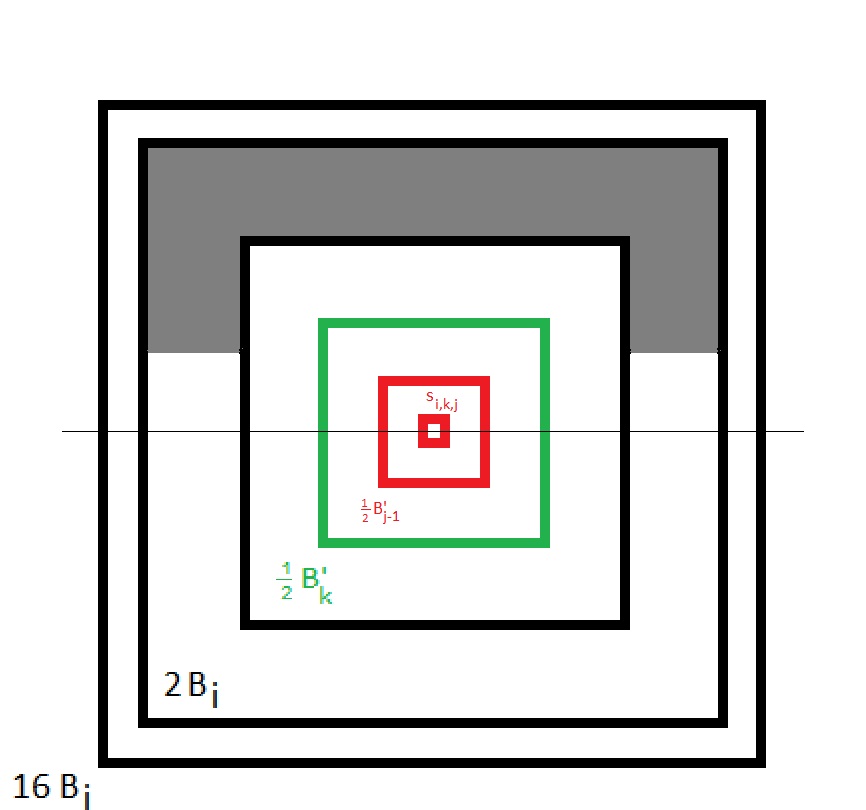}
\caption[Decoupling in $R_{i,2}$]{The decoupled variables $L_{i,k}$ control the distortion in the shaded region using the vaguelets from outside the green square. $s_{i,k,j}$ control distortion in $R_{i,2}$ using vaguelets inside the two red squares.}
  \label{FigureRegion2}
\end{figure}

\begin{proof} The usual argument involving the Borel-TIS lemma (see e.g. \cite{AJKS09}) doesn't give a good estimate, because the lemma deals with more general Gaussian fields. We will give the simplest possible argument. 

For simplicity of notation, let $2^{-l_1} = \tilde\rho_n\rho_n^k$ and $2^{-l_0} = \tilde\rho_n\rho_n^j$.
All the sums refer to intervals $J$ which appear in $s_{i,k,j}$ and $\theta$ is any point in $\BBR\cap( 6B_i\setminus B_k')$.

Then
\begin{eqnarray}
P(\sum_{J} a_J\psi_J(\theta) \geq \lambda)\leq \sum_{l=l_0}^\infty P(\sum_{|J|=2^{-l}}|a_J||\psi_J(\theta)|\geq \frac{\lambda}{2^{l-l_0}})\\
\leq \sum_{l=l_0}^\infty \sum_{|J|=2^{-l}} P(|a_J||\psi_J(\theta)|\geq \frac{\lambda}{2^{2(l-l_0)}})\\
\leq  \sum_{l=l_0}^\infty \sum_{|J|=2^{-l}} P(|a_J|\frac{C}{(2^ldist)^q)}\geq \frac{\lambda}{2^{2(l-l_0)}})\\
\leq  C \sum_{l=l_0}^\infty \sum_{|J|=2^{-l}} e^{-\frac{\lambda^2 2^{q(l-l_1)}}{4\ 2^{2(l-l_0)}}}\leq C\sum_{l=l_0}^\infty 2^{l-l_0}e^{-\frac{\lambda^2 2^{(q-2)(l-l_0) +q(l_0-l_1)}}{4}}
\end{eqnarray}
Here $C$ is a universal constant and $dist$ is the distance between the region where $J$ are and the region where we take the sup/inf over. The statements above should be understood for all $\theta\in\BBR\cap( 6B_i\setminus B_k')$, and not for a particular $\theta$.

Replace $\lambda$ by $\lambda 2^{-q(l_0-l_1)/2}$ and get :
\begin{eqnarray}
P(\sup_\theta \sum_{J} a_J\psi_J(\theta) \geq \lambda  2^{-q(l_0-l_1)/2})\leq C e^{-\lambda^2/4}
\end{eqnarray}
Recalling that $2^{-l_1} = \tilde\rho_n\rho_n^k$ and $2^{-l_0} = \tilde\rho_n\rho_n^j$ we get the conclusion.

\end{proof}

The random variables $L_{i,k}$ satisfy the following distributional inequality. 
\begin{lemma} \label{DistribIneqL}
There exists $a_n>0$ (depending only on the sequence of variances in the GFF)  and $C_n<\infty$ (independent of $i,k,\rho$) such that 
$$
 P(L_{i,k} > \lambda)\leq C_n \lambda^{-1} \rho_n^{(k-i-1)(1+a_n)}
$$
In addition, $L_{i, k}$ and $L_{j,l}$ are independent if $k<j$ or $l < i$.
\end{lemma}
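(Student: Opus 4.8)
The plan is to treat $L_{i,k}$ as a sum of a universally bounded number ($\lesssim 2^{10}$) of terms of the form $N/D$, with $N=\nu_{i,k}(J_1\cap(B_{k-1}'\setminus B_k'))$ and $D=\nu_{i,k}(J_2\setminus B_i')$, and to bound the tail of each such ratio separately. For a threshold $c>0$ I will use the trivial inclusion $\{N/D>\lambda\}\subseteq\{D<c\}\cup\{N>\lambda c\}$, control $P(D<c)$ by negative moments of $D$ and $P(N>\lambda c)$ by a fractional positive moment of $N$, and then optimise $c$. (One may assume $\lambda\ge 1$; for $\lambda<1$ the asserted bound is weaker and follows from the case $\lambda=1$.)

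The crux will be a self-similarity reduction for the decoupled measure $\nu_{i,k}$. Since $\nu_{i,k}$ uses only the vaguelets $\psi_J$ with $J\subset 16B_i$, its coarsest scale is comparable to $\tilde\rho_n\rho_n^i$, so the restriction of $\nu_{i,k}$ to a dyadic-type block of that size is, up to a deterministic factor $\sim\tilde\rho_n\rho_n^i$ and a probability-preserving change of variables, a unit cascade measure of the kind built in Section~\ref{SectionRandomMeasure}, whose coarsest variance is the value $t_{k_0}$ attached to the scale $\tilde\rho_n\rho_n^i$ and in particular is strictly below $t_c=2$. Under this rescaling the set $J_1\cap(B_{k-1}'\setminus B_k')$ (which lies outside $\tfrac12 B_k'$, hence is genuinely seen by $\nu_{i,k}$) becomes a set $E'$ of relative size $|E'|\sim\rho_n^{k-1-i}$, while $J_2\setminus B_i'$ becomes a set $E''$ of relative size $\sim 1$. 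Thus $N\sim\tilde\rho_n\rho_n^i\,\widetilde\nu(E')$ and $D\sim\tilde\rho_n\rho_n^i\,\widetilde\nu'(E'')$ for unit cascade measures $\widetilde\nu,\widetilde\nu'$, and the prefactors $\tilde\rho_n\rho_n^i$ cancel in $N/D$. The finitely many coarse vaguelets that, after rescaling, are common to $E'$ and $E''$ contribute only $e^{O(1)}$ to each factor and can be stripped off first via a Borel--TIS estimate as in Lemma~\ref{GaussianFieldLemma}.

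For the denominator I will invoke Theorem~\ref{MeasureProperties}(c): all negative moments $E[\widetilde\nu'(E'')^{-q}]$ are finite, with a bound $C_{n,q}$ depending only on $n$ and $q$ once the variance sequence is frozen. For the numerator I will use exactly the moment estimate established in the proof of Theorem~\ref{LLogLMomentEstimate} and reused in the proof of Theorem~\ref{MeasureProperties}(b): $E[\widetilde\nu(E')^{p}]\le C|E'|^{\zeta_p}$ with $\zeta_p=p-(p^2-p)t/2$, for $1<p<2/t$. Because every variance entering $\nu_{i,k}$ is $<t_c$, I can fix $p=p^*>1$ with $\zeta_{p^*}>1$ and then, exactly as in the proof of (b), choose $a^*>0$ with $\zeta_{p^*}-a^*p^*=1+\epsilon^*$, $\epsilon^*>0$; Markov then gives $P(\widetilde\nu(E')>\lambda^{1/p^*}|E'|^{a^*})\le\lambda^{-1}|E'|^{1+\epsilon^*}\sim\lambda^{-1}\rho_n^{(k-1-i)(1+\epsilon^*)}$. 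This is the origin of the gain $1+a_n$: one sets $a_n:=\epsilon^*$ (shrinking it if necessary). Taking $c\sim\tilde\rho_n\rho_n^i\,\lambda^{-1/q}\rho_n^{(k-1-i)(1+a_n)/q}$ and $q$ a large constant depending only on $n$ (through $p^*,\zeta_{p^*},a_n$; it suffices that $q\ge p^*/(p^*-1)$ and $q\ge(1+a_n)p^*/(\zeta_{p^*}-1-a_n)$) one checks that $P(D<c)\le C_{n,q}\lambda^{-1}\rho_n^{(k-i-1)(1+a_n)}$, that $\lambda c\gtrsim\tilde\rho_n\rho_n^i\,\lambda^{1/p^*}|E'|^{a^*}$ for $\lambda\ge1$, and hence that $P(N>\lambda c)\le\lambda^{-1}\rho_n^{(k-1-i)(1+a_n)}$ as well; summing the $O(1)$ terms gives the estimate (note $k-1-i=k-i-1$).

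For the independence assertion I will observe that $L_{i,k}$ is a function only of $\{a_J:\ J\subset 16B_i,\ J\not\subset\tfrac12 B_k'\}$. If $k<j$, then — using that $\rho_n$ is small — every $J$ in this family lies outside $16B_j$, hence is not used by $L_{j,l}$; if $l<i$, then $16B_i\subset\tfrac12 B_l'$, so every $J\subset 16B_i$ with $J\not\subset\tfrac12 B_k'$ in fact satisfies $J\subset\tfrac12 B_l'$ and is again not used by $L_{j,l}$. In both cases the two variables are functions of disjoint families of the independent Gaussians $a_J$, hence are independent. The step I expect to be the main obstacle is the self-similarity reduction and the accompanying moment bounds: the sets $J_1\cap(B_{k-1}'\setminus B_k')$ and $J_2\setminus B_i'$ are not dyadic intervals but intersections of dyadic intervals with square annuli, and $\nu_{i,k}$ is a decoupled, spatially truncated piece of the field, so one must check that the inequalities of Theorems~\ref{LLogLMomentEstimate} and \ref{MeasureProperties} remain valid after enclosing the relevant set in a slightly larger dyadic block (following the customary $1/3$-shift) and after discarding the vaguelets with $J\subset\tfrac12 B_k'$ or $J\not\subset 16B_i$, which can only improve the bounds.
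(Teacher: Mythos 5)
Your overall framework is sound and your independence argument is correct (it matches the paper's intent: since $16B_j\subset\tfrac12 B_k'$ and $16B_i\subset\tfrac12 B_l'$ for $\rho_n$ small, the two families of $a_J$'s are disjoint). The threshold split $\{N/D>\lambda\}\subseteq\{D<c\}\cup\{N>\lambda c\}$ with optimized $c$ is a legitimate alternative to the Markov-plus-H\"older step the paper actually performs, and both ultimately feed on the same moment inputs, so that part is a harmless cosmetic difference.

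The genuine gap is in your positive-moment estimate for the numerator. You assert $E[\widetilde\nu(E')^{p^*}]\le C|E'|^{\zeta_{p^*}}$ for a \emph{fixed} $p^*>1$ with $\zeta_{p^*}>1$, on the grounds that every variance entering $\nu_{i,k}$ is $<t_c$. But although each individual variance is $<2$, the variances $t_m$ in the cascade $\widetilde\nu$ increase to $t_c=2$ as the internal scale shrinks, so for any fixed $p^*>1$ the factor $(p^*-1)(1-p^*t_m/2)$ in the martingale-difference bound eventually turns negative and the sum diverges; no fixed-exponent $L^p$ bound of the claimed form holds for the full (near-critical) measure, and the paper never proves one. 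What the paper does instead is decompose $\nu(J)=\nu_2(J)+\sum_{m\ge2}(\nu_{m+1}(J)-\nu_m(J))$, bound $\nu_2(J)$ with a fixed exponent, and bound each difference $\nu_{m+1}(J)-\nu_m(J)$ with its own exponent $p_m\downarrow 1$, exactly as in the proof of Theorem~\ref{LLogLMomentEstimate}; summing these tails is what produces the constraint~(\ref{RecursionRelationDueToL}) on the sequence $n_k$, and this lemma is one of the places that actually forces the choice $n_k\sim(k+1)^\gamma e^{C(k+1)^{3\gamma(k+1)^\gamma}/\epsilon}$. Your proof, by skipping the martingale decomposition, not only uses an unavailable estimate but also fails to generate the $n_k$-constraint that the construction depends on; to repair it, replace the single exponent $p^*$ by the sequence $p_m$ as in the paper and re-run your threshold split term by term over the martingale differences.
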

\begin{proof}
We recall that $L_{i,k}$ is a sum of a finite (and universaly bounded) number of terms of the form $\frac{\nu_{i,k}(J_1\cap (B_{k-1}'\setminus B_k'))}{\nu_{i,k}(J_2\setminus B_i')}$. For simplicity of exposition we will redenote this quantity by $\frac{\nu(J)}{\nu(I)}$ where $I, J$ are two intervals of sizes $|I|\sim \tilde\rho_n\rho_n^{i}$ and $|J|\sim \tilde\rho_n\rho_n^{k-1}$.

The measure $\nu$ is constructed using only the random variables $a_J$ for which $J\subset 16B_i$ and $J\not\subset \frac{1}{2}B_k'$. The distributional properties of this measure are no different from the properties of the measure constructed using all the $J\subset 16B_i$. This is the case because the vaguelets corresponding to $\frac{1}{2}B_k'$ form a Gaussian field with a controlled variance when evaluated inside $16B_i\setminus B_k'$. We use the same notation, $\nu$, for this more "complete" measure.

We now have
\begin{eqnarray*}
P\left(\frac{\nu(J)}{\nu(I)}>\lambda\right) &\leq& P\left(\frac{\nu_2(J)}{\nu(I)}>\lambda /2 \right)+ P\left(\frac{\nu(J)-\nu_2(J)}{\nu(I)}>\lambda /2 \right) \\
&\leq&  P\left(\frac{\nu_2(J)}{\nu(I)}>\lambda /2 \right) + \sum_{i=3}^{\infty} P\left(\frac{\nu_{i}(J)-\nu_{i-1}(J)}{\nu(I)}>\lambda /2^{i-1} \right)
\end{eqnarray*}
Here the variables $\nu_i$ are the martingale approximations to $\nu$.
We have the following trivial inequality:
\begin{equation}
P\left(\frac{\nu_2(J)}{\nu(I)}>\lambda /2 \right) \leq \left(\frac{2}{\lambda}\right)^{\tilde q_2}E[\left(\frac{\nu_2(J)}{\nu(I)}\right)^{\tilde q_2}].
\end{equation}
Let $p_2>\tilde{q}_2$ for which $E[\nu_2(J)^{p_2}]<\infty$. Inspection of theorem \ref{LLogLMomentEstimate} reveals that:
\begin{eqnarray}
E[\nu_2(J)^{p_2}]\leq C_{p_2} |J|^{\zeta_{p_2}} |I|^{\frac{(p_2^2-p_2)t_2}{2}}
\end{eqnarray}
where $\zeta_{p_2} := p_2-\frac{(p_2^2-p_2)t_2}{2}$.

We know that $\nu(I)$ has negative moments of all orders. In particular, theorem \ref{MeasureProperties} gives us the following bound for the negative moment:
\begin{eqnarray}
E[\nu(I)^{-q_2\tilde{q}_2}]\leq C|I|^{-q_2\tilde{q}_2} \left(C_{neg}^{\frac{1}{\delta_1}} s_1\right)^{q_2\tilde{q}_2}
\end{eqnarray}
where $q_2 = \frac{p_2}{p_2-\tilde{q}_2}$ (the conjugate of $\frac{p_2}{\tilde{q}_2}$), and $\delta_1, s_1$ are the constants which appear in the proof of theorem \ref{MeasureProperties}.

Applying Hoelder inequality and combining the last few estimates we get:
\begin{eqnarray}
P\left(\frac{\nu_2(J)}{\nu(I)}>\lambda /2 \right) \leq C  C_{p_2}^{2} \left(\frac{2}{\lambda}\right)^{\tilde q_2} |J|^{\tilde{q}_2\zeta_{p_2}/p_2} |I|^{\frac{\tilde{q}_2}{p_2}\frac{(p_2^2-p_2)t_2}{2}}|I|^{-\tilde{q}_2} \left(2^{\frac{1}{\delta_1}} s_1\right)^{\tilde{q}_2}\\
\leq C(\tilde{q}_2,p_2,t_1)  \left(\frac{1}{\lambda}\right)^{\tilde q_2}\left(\frac{|J|}{|I|}\right)^{\tilde{q}_2\zeta_{p_2}/p_2}\\
\leq  C(\tilde{q}_2,p_2,t_1)  \frac{1}{\lambda}\left(\frac{|J|}{|I|}\right)^{1+a_2}
\end{eqnarray}
where $C(\tilde{q}_2,p_2,t_1) $ contains all the constants and $\tilde{q}_2\zeta_{p_2}/p_2 = 1+a_2$. This constant is dominated by $\left(2^{\frac{1}{\delta_1}} s_1\right)^{\tilde{q}_2}$ and $\delta_1,s_1$ depend on the first variance which appears in the definition of $\nu$ (hence the parameter $t_1$).

We deal with the terms of the form 
$$
P\left(\frac{\nu_{i+1}(J)-\nu_{i}(J)}{\nu(I)}>\lambda /2^{i+1} \right)
$$
in the analoguous fashion. To bound the numerator one chooses $p_{i+1}$ for which the corresponding moment of $\nu_{i+1}(J)-\nu_{i}(J)$ exists. The computation of this moment is basically given in theorem \ref{LLogLMomentEstimate}.
\begin{eqnarray}
E[|\nu_{i+1}(J)-\nu_i(J)|^{p_{i+1}}]\leq C_{i+1} |J|^{\zeta_{p_{i+1}}}|I|^{\frac{(p_{i+1}^2-p_{i+1})t_1}{2}}2^{- (n_i-j)(p_{i+1}-1)(1-p_{i+1}t_{i}/2)}
\end{eqnarray}
where $C_{i+1} = \frac{C}{2^{(p_{i+1}-1)(1-p_{i+1}t_{i+1}/2)}}$ and $|J| = 2^{-j}$.

Combined with the negative moment estimate this gives:
\begin{eqnarray}\nonumber
P\left(\frac{\nu_{i+1}(J)-\nu_{i}(J)}{\nu(I)}>\lambda /2^{i+1} \right)\leq C(\tilde{q}_{i+1},p_{i+1},t_1)  \left(\frac{2^{i+1}}{\lambda}\right)^{\tilde q_{i+1}}\frac{|J|}{|I|^{\tilde{q}_{i+1}\zeta_{p_{i+1}}/p_{i+1}}} 2^{-n_i (\zeta_{p_{i+1}}-1)\frac{\tilde{q}_{i+1}}{p_{i+1}}}
\end{eqnarray}

So we get (after we change the index)
\begin{eqnarray}
\sum_{l=2} P()\leq \frac{|J|}{\lambda|I|}\sum_{l=2}C(\tilde{q}_{l+1},p_{l+1},t_1) 2^{(l+1)\tilde{q}_{l+1}}\frac{1}{|I|^{\tilde{q}_{l+1}\zeta_{p_{l+1}}/p_{l+1}-1}} 2^{- n_l(\zeta_{p_{l+1}}-1)\frac{\tilde{q}_{i+1}}{p_{i+1}}}
\end{eqnarray}

Since $|I|\sim \tilde\rho_n\rho_n^{i}$ and $|J|\sim \tilde\rho_n\rho_n^{k-1}$ we have $\frac{|J|}{|I|}\sim\rho_n^{k-1-i}\geq \rho_n^{N_n-1-i} \geq \rho_n^{N_n}$. 

The variance $t_1$ corresponds to levels $\sim \tilde{\rho}_n\rho_n^{N_n}$ which in turn is larger than $2^{-n_1}$ in the notation of this lemma.

$|I|$ is between $\sim \tilde\rho_n\rho_n^{N_n}$ and $\tilde{\rho}_n$. To get the desired estimate it suffices to have:
\begin{eqnarray}
C(\tilde{q}_{3},p_{3},t_1) 2^{3\tilde{q}_{3}} 2^{- n_2(\zeta_{p_{3}}-1)\frac{\tilde{q}_{3}}{p_{3}}} \leq\frac{1}{2} \rho_n^{N_n}\left(\tilde{\rho}_n\rho_n^{a_2N_n}\right)^{(\tilde{q}_{3}\zeta_{p_{3}}/p_{3}-1)}
\end{eqnarray}
Since $2^{-n_1+n_0}\leq \rho_n^{N_n}$ ($n_0$ corresponds to the levels above $\tilde\rho_n$) and $2^{-n_1} < \tilde{\rho}_n\rho_n^{N_n}$ , it suffices to take:
\begin{eqnarray}
C(\tilde{q}_{3},p_{3},t_1) 2^{3\tilde{q}_{3}} 2^{- n_2(\zeta_{p_{3}}-1)\frac{\tilde{q}_{3}}{p_{3}}} \leq 2^{(-n_1+n_0)a_2-n_1 a_3}
\end{eqnarray}
If we also have the following relations on $n_k$:
\begin{eqnarray}
C(\tilde{q}_{k+2},p_{k+2},t_k) 2^{3\tilde{q}_{k+2}} 2^{- n_{k+1}(\zeta_{p_{k+2}}-1)\frac{\tilde{q}_{k+2}}{p_{k+2}}} \leq 2^{(-n_{k}+n_{k-1})a_{k+1}-n_k a_{k+2}}, \forall k>1\label{RecursionRelationDueToL}
\end{eqnarray}
we get the desired conclusion. This is because these relations, although not the same as the ones present in the sum, dominate the latter. 

In the case when $t_k = 2-k^{-\gamma}$ and $p_k = 1+ k^{-\gamma}/2$ we get $\zeta_{p_k} = 1+k^{-3\gamma}/8$. We take $q_k = p_k(1-k^{-3\gamma}/16)$ and get $a_k = k^{-3\gamma}/16 - k^{-6\gamma}/12\sim k^{-3\gamma}/16$.

Since $C(\tilde{q}_{k+2},p_{k+2},t_k)\leq C_{neg}^{k^{3\gamma k^\gamma}}k^{12\gamma k^\gamma}$ relation (\ref{RecursionRelationDueToL}) can be simplified as follows:
\begin{eqnarray}
 ck^{3\gamma k^\gamma} - n_{k+1}a_{k+2} \leq (-n_{k}+n_{k-1})a_{k+1}-n_k a_{k+2}
\end{eqnarray}
so it suffices to take $ n_{k+1}a_{k+2} \sim  c(k+1)^{3\gamma (k+1)^\gamma}$ or $n_{k+1} \sim (k+2)^{3\gamma} c(k+1)^{3\gamma (k+1)^\gamma}$.

\end{proof}

Recall that we are trying to prove an estimate on moduli of annuli at scales between $\tilde\rho_n\rho_n^{N_n}$ and $\tilde\rho_n$. All the work we have done decoupling the distortion was done to address a fixed $n$. The distributional inequality is satisfied by all $L_{i,k}$, where $i \leq k\leq N_n$. The constant $a_n$ depends on first variances in the definition on the measures $\nu_{i,k}$. For each $n$, all the first variances are equal to the same value $t_n$. As $n$ increases, $a_n\rightarrow 0$. This is a big difference between the critical case and the non critical case treated in \cite{AJKS09}. In the sub-critical case the variables $L_{i,k}$ were defined for all scales $k>i$ and they all shared the same value of the constant $a$.

\section{Random tree}\label{SectionStoppingTime}

We need to control the distortion in the regions $R_{i,2}$. The authors of \cite{AJKS09} were able to get bounded distortion with high probability. In our case this is not possible anymore essentially because as the variances $t_n\rightarrow t_c$ we lose control over $L_{i,k}$. We use a stopping time algorithm to construct a random tree. We devise a collection of rules which we apply to dyadic squares. If all these rules are satisfied for a particular interval/square, then the distortion will be controlled. Our goal is to obtain an infinite d-ary surviving tree where the distortion is controlled.

\subsection{The rules that define the tree}

In the following $A, A_{t}$, $B$ are large constants, $\delta$ a small constant and $N$ is a large positive integer.

We construct stopping rules on a tree in which each node represents an interval of size $2^{-i(N-4)}$. For each interval $I$, a node in this tree, we denote by $Ans_n(I)$ the ancestor $n$ levels above (and of size $2^{-(i-n)(N-4)}$).

We start by considering a dyadic interval $I$ of size $2^{-i(N-4)}$ and measure $\nu$ which is constructed only using the vaguelets corresponding to intervals $J$ which are subsets of $I$ and its closest four neighbors (two to the left, two to the right - call this set $\mathcal{J}(I)$).   Out of all the dyadic subintervals of $j(I)$ of size $|I|2^{-Nj}$ we select and mark half in an alternating fashion and call them $I_k^j$. 

The interval $I$ survives if all of the following good events take place:
\begin{itemize}
\item[1)] 
\begin{equation}
\frac{1}{A_{t}} |I_k^1|\leq \nu(I_k^1)\leq A_{t}|I_k^1|,\forall k
\end{equation}
where $t$ represents the variance (in the definition of the GFF) for which all levels with that variance are below the level of $I_k^j$.
\item[2)] For each  $J\in\{I, I_l, I_r\}$ ( l and r stand for left and right neighbours)
\begin{eqnarray}
\sum_{I_k^j\subset J} \nu(I_k^j)\leq A |J| 2^{-j\delta}, \forall j>1
\end{eqnarray}
\item [3)] For each $n\leq i$:
\begin{eqnarray}
\sup_{\theta\in \mathcal{J}(I)}\sum_{J\subset \mathcal{J}(Ans_n((I))\setminus\mathcal{J}(I)}a_J\psi_J(\theta)  - a_J\psi_J(\theta_I)  \leq B 2^{-n}\\
\inf_{\theta\in \mathcal{J}(I)}\sum_{J\subset \mathcal{J}(Ans_n((I))\setminus\mathcal{J}(I)}a_J\psi_J(\theta)  - a_J\psi_J(\theta_I)  \geq -B 2^{-n}
\end{eqnarray}
where $\theta_I$ is the center of $I$.
\item[4)] 
\begin{eqnarray}
\sup_{\theta\in j(I)}\sum_{J\subset j(I),|J|\geq|I_k^1|2^5}a_J\psi_J(\theta)  - a_J\psi_J(\theta_I) \leq B\\
\inf_{\theta\in j(I)}\sum_{J\subset j(I),|J|\geq|I_k^1|2^5}a_J\psi_J(\theta)  - a_J\psi_J(\theta_I) \geq -B
\end{eqnarray}
where $\theta_I$ is the center of $I$.
\end{itemize}

All the constants are chosen such that 
\begin{equation}\label{probabilityRulesHold}
P(\mbox{rules hold})\approx 1
\end{equation} 
The constants $A_{t}$ vary with the level at which we apply the rules. As we apply these rules to smaller and smaller intervals $I$, the properties of the measure $\nu$ become weaker and weaker. We want to make sure that the probability (\ref{probabilityRulesHold}) doesn't change as we go deeper and deeper. 

If all these rules are satisfied the distortion inside $j(I)$ between heights $|I_k^1|2^5$ and $2^{-n}=2^{-i(N-4)} = |I|$ is bounded by $D_n$. We reiterate the fact that this sequence won't be bounded.

\begin{figure}[h!]
 \centering
    \includegraphics{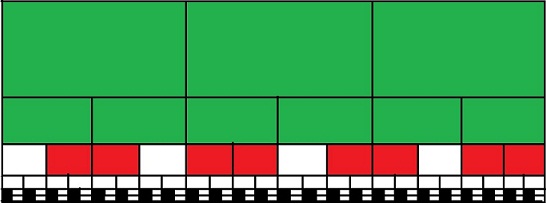}
    \caption[Construction of the random tree]{The interval in the middle is $I$. The black intervals are the marked $I_k^1$. The green boxes represent the area where the distortion is under control. At the next step we only look at the white boxes that lie between four red boxes(here shown as two due to lack of space).  }
    \label{StoppingTimeRegion}
\end{figure}

If all the rules are satisfied for $I$, we then look at its "children". We consider all the dyadic intervals of size $|I_k^1|2^4$ inside the region $j(I)$. We don't have control over the distortion in the boxes corresponding to these intervals. We select one third of these intervals in such a way that any two selected intervals are separated by four boxes which we do not select. See Figure \ref{StoppingTimeRegion}. There will be a total of $2^{N-4}/5$ such intervals. We now run the rules for each one of these intervals. \textit{When doing this, we consider the measure $\nu$ constructed only using the vaguelets corresponding to $\mathcal{J}(\mbox{this interval})$}. As we run these rules for smaller and smaller intervals we obtain a random tree.

\subsection{A surviving d-ary subtree with high probability}

The goal of this section is to provide estimates on the probability that each rule fails and then to obtain an estimate on the probability that there is a d-ary surviving subtree. 

We have
\begin{lemma}
$$
P(\mbox{rule 1 fails})\leq \frac{2}{A}
$$
\end{lemma}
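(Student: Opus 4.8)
The plan is to combine a union bound over the finitely many marked intervals $I_k^1$ with Markov's inequality for the upper deviation and with the negative moments of the random measure (Theorem \ref{MeasureProperties}(c)) for the lower deviation. Recall that rule 1 fails exactly when $\nu(I_k^1) > A_t|I_k^1|$ or $\nu(I_k^1) < A_t^{-1}|I_k^1|$ for some $k$, and that the $I_k^1$ are $\sim\frac{3}{2}2^N$ of the $3\cdot 2^N$ dyadic subintervals of $j(I)$ of size $|I|2^{-N}$, so there are $\lesssim 2^N$ of them. Throughout, $\nu$ denotes the measure built only from the vaguelets indexed by subintervals of $\mathcal{J}(I)$.

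First I would record the expectation. Since $E[e^{a_J\psi_J(\theta)-\frac{t_J}{2}\psi_J^2(\theta)}]=1$ for every $J$, each martingale approximation satisfies $E[\nu_m(I_k^1)]=|I_k^1|$, and by the uniform integrability established in Theorem \ref{LLogLMomentEstimate} and its corollary the limit still satisfies $E[\nu(I_k^1)]=|I_k^1|$. Markov's inequality then gives $P(\nu(I_k^1) > A_t|I_k^1|) \le A_t^{-1}$ for every $k$. For the lower deviation I would write $P(\nu(I_k^1) < A_t^{-1}|I_k^1|) = P(\nu(I_k^1)^{-1} > A_t|I_k^1|^{-1}) \le A_t^{-1}\,|I_k^1|\,E[\nu(I_k^1)^{-1}]$; by part (c) of Theorem \ref{MeasureProperties}, applied as explained there by decoupling the $O(1)$-variance Gaussian field coming from the levels above $I_k^1$, the quantity $|I_k^1|\,E[\nu(I_k^1)^{-1}]$ is bounded by a finite constant $C_*$ that depends only on the variance $t$ of the block containing the scale of $I_k^1$, not on $k$.

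Summing the two estimates over the $\lesssim 2^N$ indices $k$ gives $P(\text{rule 1 fails}) \le C\,2^N(1+C_*)\,A_t^{-1}$, so it suffices to take $A_t$ — which, as announced right before the lemma, is allowed to vary with the level at which the rule is applied — at least $\frac{A}{2}\,C\,2^N(1+C_*)$, which yields the claimed bound $\frac{2}{A}$.

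I expect the only genuinely delicate point to be the lower deviation: the constant $C_*$ is exactly the normalized negative first moment of $\nu(I_k^1)$, and by the analysis in the proof of Theorem \ref{MeasureProperties}(c) it is governed by $C_{neg}^{1/\delta}s_0$, which blows up as the relevant variance tends to $t_c$. This is why $A_t$ cannot be chosen uniformly in the depth of the tree, and it is precisely the phenomenon flagged in the discussion around \eqref{probabilityRulesHold}. However, Theorem \ref{MeasureProperties}(c) guarantees $C_*<\infty$ at each fixed level, so the choice of $A_t$ is always possible; controlling how fast $A_t$ (equivalently $C_*$) grows with depth, and checking it is compatible with the survival estimate for the tree, is a separate matter handled later and not needed for this lemma.
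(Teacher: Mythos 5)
Your argument is correct and follows the paper's strategy essentially verbatim: Markov's inequality with $E[\nu(I_k^1)]=|I_k^1|$ for the upper deviation, and the negative first moment (Theorem~\ref{MeasureProperties}(c), which the paper applies after peeling off the Gaussian field from the $N$ levels between $|I|$ and $|I_k^1|$ as $\nu(I_k^1)=e^{\Sigma}\,|I_k^1|\,\tilde\nu([0,1])$, contributing the factor $E[e^{-\Sigma}]\sim e^{cN}$) for the lower deviation, with $A_t$ chosen large enough to absorb the depth-dependent constant.

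The one place where you go beyond what the paper writes is the union bound over the $\sim\frac{3}{2}\,2^N$ marked intervals $I_k^1$. As printed, the paper estimates only a single $k$ and then declares $A_{t_k}=A\,C_{\mathrm{neg}}^{1/\delta_k}s_{0,k}e^{N}$ "gives the conclusion," which on its face produces a bound of order $2^N/A$, not $2/A$, once one sums over $k$. Your version makes the union bound explicit and folds the extra $2^N$ into $A_t$; since $A_t$ is a depth-dependent threshold that only enters the rest of the argument through the requirement that $A_t$ grows slowly enough for the modulus estimate (via \eqref{ModulusCondition}), inflating it by a harmless fixed factor $2^N$ is free, and the tree-survival and modulus arguments are unaffected. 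So this is the same proof, written a bit more carefully; there is no gap in your argument.
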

\begin{proof}
It is immediate that 
\begin{equation}
P(\nu(I_k^1)\geq A_{t}|I_k^1|)\leq \frac{|I_k^1|}{|I_k^1|A_{t}} = \frac{1}{A_t}
\end{equation}
For the other inequality we need to use the negative moment estimate (\ref{NegativeMomentEstimate}). We first write $\nu(I_k^1)$ as $e^{\sum} |I_k^1|\tilde\nu([0,1])$ where $e^{\sum}$ contains the vaguelets from levels above that of $I_k^1$ and below that of $I$. Strictly speaking, $\tilde\nu$ is slightly different from the original construction, but there is no difference relevant for out computation.

This implies
\begin{eqnarray}
P(\frac{1}{A_{t}} |I_k^1|\geq \nu(I_k^1)) \leq \frac{C_{neg}^{1/\delta} s_0E[e^{-\sum}]}{A_t} = \frac{C_{neg}^{1/\delta} s_0e^{N}}{A_t}
\end{eqnarray}

We recall that the constants satisfy: 
\begin{equation}
\epsilon\sim \left(\frac{(p-1)(1-pt/2)\ln2}{C2^p} \right)^{\frac{1}{p-1}}, s_0\sim\frac{1}{\epsilon^2}, \delta \sim \frac{\ln\epsilon}{\ln(1-\epsilon)}
\end{equation}
where $t=t_k$ for which all the $n_k$ levels with $t=t_k$ are completely below level $2^{-i(N-4)+N}$. $p$ is the appropriate power.

If we set $A_{t_k} =A C_{neg}^{1/\delta_k} s_{0,k}e^{N}$ we get the conclusion.

\end{proof}
 
\begin{lemma}
$$
P(\mbox{rule 2 fails})\leq \frac{C}{A}
$$
\end{lemma}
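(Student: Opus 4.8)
The plan is to bound each way rule 2 can fail by Markov's inequality applied to the \emph{first} moment of $\nu$ on a deterministic set, and then to sum a geometric series in the generation index $j$.

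First I would apply the union bound over the three intervals $J\in\{I,I_l,I_r\}$ and over the generations $j>1$; it then suffices to show, for each such pair $(J,j)$, that
\[
P\Big(\textstyle\sum_{I_k^j\subset J}\nu(I_k^j)>A|J|2^{-j\delta}\Big)\ \le\ \frac{C\,2^{-j(1-\delta)}}{A}.
\]
Fix $J$ and $j$ and put $E:=\bigcup_{I_k^j\subset J}I_k^j$, so that $\sum_{I_k^j\subset J}\nu(I_k^j)=\nu(E)$. Since the marked intervals are chosen in an alternating (hence deterministic) fashion, $E$ is a fixed set, and the only input about $\nu$ that is needed is the bound $E[\nu(E)]\le|E|$. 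This follows because, writing $S_m$ for the partial field defining $\nu$ truncated at scale $2^{-m}$ (built from the vaguelets $\psi_J$ with $J\subset\mathcal J(I)$), one has $E[e^{S_m(\theta)}]=1$ for every $\theta$ — the Gaussian part contributes $\exp(\tfrac12\sum t_J\psi_J^2(\theta))$, which cancels the deterministic counter-term $\exp(-\tfrac12\sum t_J\psi_J^2(\theta))$ — so that $E[\nu_m(E)]=|E|$ by Fubini; as $\nu_m(E)\to\nu(E)$ a.s.\ along the nonnegative martingale, Fatou gives $E[\nu(E)]\le|E|$ (in fact equality, by the $L\log L$ bound of Theorem \ref{LLogLMomentEstimate}, but only the inequality is used). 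Markov's inequality then yields $P(\nu(E)>A|J|2^{-j\delta})\le |E|/(A|J|2^{-j\delta})$.

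The remaining ingredient is the combinatorial estimate $|E|=\sum_{I_k^j\subset J}|I_k^j|\le c\,2^{-j}|J|$, which is forced by the construction of the marked family: the generation-$j$ intervals $I_k^j$ lie inside the generation-$(j-1)$ ones and only a fixed fraction of the eligible subintervals is retained at each step, so the total length of the generation-$j$ marked set decays geometrically. Substituting this gives $P(\nu(E)>A|J|2^{-j\delta})\le c\,2^{-j(1-\delta)}/A$; since $\delta<1$ the series $\sum_{j>1}2^{-j(1-\delta)}$ converges, and summing over the three choices of $J$ produces $P(\mbox{rule 2 fails})\le C/A$, as claimed.

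The main — and essentially only — obstacle is to make sure the total length $\sum_{I_k^j\subset J}|I_k^j|$ decays faster than the threshold $2^{-j\delta}$, which is exactly where the nested/sparse nature of the marking enters and why $\delta$ must be taken below the decay rate of the marking density. If one prefers not to rely on this, the same conclusion can be obtained by a slightly heavier argument in the style of Theorem \ref{LLogLMomentEstimate}: bound $E[\nu(E)^{p}]$ for a suitable $p>1$ with $\zeta_p>1$ by treating the measure-increments as independent via the martingale square function, use $E[\nu(I_k^j)^p]\le C_p|I_k^j|^{\zeta_p}$, and observe that the resulting factors $|I_k^j|^{\zeta_p-1}$ now decay in $j$ because $|I_k^j|=|I|2^{-Nj}$ with $N$ large; a $p$-th moment Markov inequality then again gives a summable bound, of order $A^{-p}\le A^{-1}$ for $A\ge1$.
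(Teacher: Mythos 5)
Your first-moment argument is correct and genuinely simpler than the paper's proof. The crucial geometric fact you rely on — that $|E|=\sum_{I_k^j\subset J}|I_k^j|\le c\,2^{-j}|J|$ — is indeed what the construction gives: at each generation only a fraction $1/2$ of the previously retained intervals is marked, so the marked set at depth $j$ covers a fraction $\sim 2^{-j}$ of $J$. This is corroborated by the paper's own proof, where the count of marked intervals appears as $2^{(N-1)j}$ (and $2^{(N-1)j}\cdot|J|2^{-Nj}=2^{-j}|J|$); the textual description of the marking is admittedly terse, but this is the reading that is consistent with the estimates, and without it the threshold $A|J|2^{-j\delta}$ in Rule 2 would fall below $E[\nu(E)]$ and the lemma would be false. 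Given this, $E[\nu_m(E)]=|E|$ for the localized martingale and Fatou, then Markov, give $P(\nu(E)>A|J|2^{-j\delta})\le c\,2^{-j(1-\delta)}/A$, which is summable for $\delta<1$. The paper instead reruns the $L^p$ martingale-difference machinery of Theorem~\ref{LLogLMomentEstimate} on $G_m=\nu_m(E)$: it bounds $E[|G_{m+1}-G_m|^{p_m}]$, applies Chebyshev with $p_m>1$, and sums over $m$, which requires the slightly stronger constraint $\delta<1/p_m$ and invokes the full apparatus of martingale square functions, the choice of exponents $p_m$, and the sequence $n_m$. What your route buys is transparency — the lemma is seen to depend only on the geometric decay of the marked set and not on any fine moment structure of $\nu$ — while the paper's route illustrates that the same truncation-by-scales estimate from Theorem~\ref{LLogLMomentEstimate} transfers unchanged to the localized setting. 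Your fallback sketch via $E[\nu(E)^p]$ and the martingale square function is, in substance, the paper's proof.
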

\begin{proof}
We start by obtaining an estimate for a particular $j$. We remark that 
$$
\sum_{I_k^j\subset J} \nu(I_k^j)  = \nu(\cup_{I_k^j\subset J} I_k^j)
$$
and that the latter is simply the limit of a martingale $G_m =  \nu_m(\cup_{I_k^j\subset J} I_k^j)$ constructed in the same way as the original $F_k$. There are only a few differences:
\begin{itemize}
\item The measure $\nu$ is constructed only using vaguelets $\psi_*$ corresponding to levels starting at $2^{-i} = |J|$ and inside $\mathcal{J}(I)$.
\item The index $m = 0$ stands for $d\nu_0 = d\theta$, while $\nu_1$ stands for the measure obtained using all Gaussian random variables and vaguelets having the variance of level $2^{-(i(N-4)+jN)}$ (the level corresponding to $I_k^j$ ), denoted in this proof by $t_1$. 
\end{itemize}

We will obtain estimates on the following
\begin{eqnarray}
P(|G_{m+1}-G_m|> \frac{6A|J|2^{-j\delta}}{m^2\pi^2})
\end{eqnarray}
A look at the argument in theorem \ref{LLogLMomentEstimate} reveals that the same argument works in this case. The only differences come from the fact that we have to replace the interval $[0,1]$ by $S=\cup_{I_k^j\subset J} I_k^j$ (and the fact that the vaguelets decay fast enough).

We get the estimate:
\begin{eqnarray}
E[|G_{m+1}-G_m|^p] \leq
 C C2^{-ip} 2^{(N-1)j}2^{-Njp} 2^{(p^2-p)\frac{jNt_1}{2}} 2^{-n_m(p-1)(1 - \frac{ p t_m}{2})} 
\end{eqnarray}
The $n_m, t_m$ are all relative to the dyadic level of $S$.

We emphasize that the $p$ corresponds to the $t_m$, so the estimate should read:
\begin{eqnarray}
E[|G_{m+1}-G_m|^{p_m}]\leq C2^{-ip_m} 2^{(N-1)j}2^{-Njp_m}2^{(p_m^2-p_m)\frac{jNt_1}{2}} 2^{-n_m(p_m-1)(1 - \frac{ p_m t_m}{2})}
\end{eqnarray}

This immediately leads us to:
\begin{eqnarray}
P(|G_{m+1}-G_m|> \frac{6A|J|2^{-j\delta}}{m^2\pi^2}) \leq \\
\leq   C 2^{-ip_m} 2^{(N-1)j}2^{-Njp_m}2^{(p_m^2-p_m)\frac{jNt_1}{2}} 2^{-n_m(p_m-1)(1 - \frac{ p_m t_m}{2})}m^{2p_m} 2^{jp_m\delta}2^{p_m i}A^{-p_m}\\
=  C  2^{-j(1-\delta p_m)}2^{-Nj(p_m-1)+(p_m^2-p_m)\frac{jNt_1}{2}} 2^{-n_m(p_m-1)(1 - \frac{ p_m t_m}{2})}m^{2p_m} A^{-p_m}
\end{eqnarray}
This immediately implies:
\begin{eqnarray}
P(\sum_{I_k^j\subset J} \nu(I_k^j)> A |J| 2^{-j\delta})\leq C  2^{-j(1-\delta p_m)} A^{-1}
\end{eqnarray}
which in turn gives us 
\begin{equation}
P(\mbox{rule 2 fails})\leq CA^{-1}
\end{equation}
\end{proof}

\begin{lemma}
$$
P(\mbox{rule 4 fails})\leq C(1+CB) e^{-CB^2/2}
$$
\end{lemma}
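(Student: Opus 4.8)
The plan is to treat this as one more instance of the Borel--TIS argument already used in the proofs of Lemmas~\ref{GaussianFieldLemma}, \ref{DistribIneqt}, \ref{DistribIneqtrii}, \ref{DistribIneqtrik} and \ref{DistribIneqs}. Set $F(\theta):=\sum_{J\subset j(I),\,|J|\ge|I_k^1|2^5}a_J\psi_J(\theta)$ and $G(\theta):=F(\theta)-F(\theta_I)$, so that $G$ is a centered Gaussian field on the (fixed, bounded) interval $j(I)$ with $G(\theta_I)=0$. Rule~4 fails precisely when $\sup_{\theta\in j(I)}|G(\theta)|>B$; since $-G$ has the same law as $G$, it suffices to bound $P(\sup_{\theta\in j(I)}G(\theta)>B)$ and double it. Thus the whole statement reduces to a tail bound for the supremum of a continuous centered Gaussian field over $j(I)$.

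First I would control the increment semimetric $d(\theta,\theta'):=\big(\mathbb{E}|G(\theta)-G(\theta')|^2\big)^{1/2}=\big(\mathbb{E}|F(\theta)-F(\theta')|^2\big)^{1/2}$. Exactly as in Lemma~\ref{GaussianFieldLemma}, the mean value theorem together with the decay estimates of Lemma~\ref{DecayOfVaguelet} give, for $\theta,\theta'\in j(I)$,
\[
\mathbb{E}|F(\theta)-F(\theta')|^2 \le C|\theta-\theta'|\sum_{j:\,2^{-j}\ge|I_k^1|2^5}2^j \le C\,\frac{|\theta-\theta'|}{|I_k^1|},
\]
the inner sum over $J$ at a fixed scale being $O(2^j)$ thanks to the convergence produced by the vaguelet tails; hence $d(\theta,\theta')\le C(|\theta-\theta'|/|I_k^1|)^{1/2}$, so $G$ has an a.s.\ continuous modification and Borel--TIS (and Dudley's bound for $\mathbb{E}\sup G$) apply. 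Next I would bound the maximal variance $\sigma^2:=\sup_{\theta\in j(I)}\mathbb{E}|G(\theta)|^2=\sup_\theta\sum_{J\subset j(I),\,|J|\ge|I_k^1|2^5}t_J(\psi_J(\theta)-\psi_J(\theta_I))^2$: splitting scale by scale and invoking the single-scale sum-of-squares estimate behind Lemma~\ref{VagueletLemma} (together with $(a-b)^2\le 2a^2+2b^2$), each of the $O(N)$ scales between $|I|$ and $|I_k^1|2^5$ contributes at most a universal constant, so $\sigma^2\le C_N$ with $C_N$ depending only on the fixed integer $N$, not on $i$ or the realization; the Dudley integral over $(j(I),d)$ is likewise $\le C_N$, whence $\mathbb{E}\sup_{j(I)}G\le C_N$. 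The Borel--TIS inequality then yields, for $B$ large,
\[
P\!\left(\sup_{\theta\in j(I)}G(\theta)>B\right)\le P\!\left(\sup_{\theta\in j(I)}G(\theta)-\mathbb{E}\sup_{\theta\in j(I)}G(\theta)>B/2\right)\le e^{-B^2/(8\sigma^2)},
\]
and after doubling and absorbing constants this has the claimed form $C(1+CB)e^{-CB^2/2}$ (the polynomial prefactor $1+CB$ can alternatively be produced, avoiding any estimate of $\mathbb{E}\sup G$, by integrating the pure Gaussian tail exactly as in the preceding lemmas). Choosing $B$ large then makes $P(\text{Rule 4 fails})$ as small as required for \eqref{probabilityRulesHold}.

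The one step needing a little care --- and the reason the constants here, unlike in Lemmas~\ref{DistribIneqtrii}--\ref{DistribIneqs}, are allowed to depend on $N$ --- is the maximal-variance estimate. In those earlier lemmas the vaguelets entering the field are supported far from the set over which the supremum is taken, so their decay yields a genuinely universal variance bound, whereas in Rule~4 the vaguelets $\psi_J$ with $J\subset j(I)$ sit exactly where $\theta$ ranges, the decay contributes nothing, and the variance accumulates one bounded term per scale, i.e.\ a factor comparable to $N$. Since $N$ is fixed once and for all this is immaterial, but it is precisely what forces the $N$-dependence into $C$; everything else is the routine Borel--TIS computation already carried out several times in this section.
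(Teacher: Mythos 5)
Your argument is correct and follows the paper's route: both reduce Rule 4 to a Borel--TIS tail bound for the centered Gaussian field $G(\theta)=F(\theta)-F(\theta_I)$ on $j(I)$, with the increment covariance $E|F(\theta)-F(\theta')|^2\le C|\theta-\theta'|/|I_k^1|$ obtained from the vaguelet decay, and the resulting $N$-dependent constants absorbed since $N$ is fixed once and for all. The only (immaterial) difference is that you bound the maximal variance more tightly as $\sigma^2\lesssim N$ via Lemma~\ref{VagueletLemma}, whereas the paper reads off the cruder $\sigma^2\lesssim |I_k^1|^{-1}|I|=2^N$ directly from the Lipschitz increment bound and the diameter of $j(I)$.
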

\begin{proof}
Rule 4 is a condition about a centered Gaussian Field with covariance bounded by $C|I_k^j|^{-1}$. We are interested in its supremum/infimum over an interval of size $3|I|$, so by Borel-TIS these behave like Gaussian variables with mean zero and standard deviation $C\sqrt{|I_k^j|^{-1} |I|} = C2^{N/2} = C$. 

This gives us:
\begin{equation}
P(\mbox{rule 4 fails})\leq C(1+CB) e^{-CB^2/2}
\end{equation}
\end{proof}

We remark that rules 1,2 and 4 alone (not considering rule 3) give rise to an independent tree. The probability that a node fails is given by $p_f = \frac{C}{A}+C(1+CB) e^{-CB^2/2}$

\begin{lemma}
$$
P(\mbox{there is d-ary subtree, which survives rules }1,2,4)> 1-2p_{f}
$$
\end{lemma}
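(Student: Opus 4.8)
The plan is to recognize the statement as a standard survival estimate for independent site percolation on a regular tree, and to prove it by a short bootstrap over finite truncations. By the remark preceding the lemma, the events $\{I$ survives rules $1,2,4\}$ are mutually independent as $I$ ranges over the nodes of the tree, each node has exactly $m := 2^{N-4}/5$ children in the construction, and each node is \emph{open} (i.e.\ survives rules $1,2,4$) with probability at least $1-p_f$, where $p_f=\tfrac{C}{A}+C(1+CB)e^{-CB^2/2}$. Fix the target arity $d$ (a fixed integer with $2\le d\le m-1$). Call a node $\infty$-good if it is open and the open cluster below it contains an infinite subtree in which every vertex has at least $d$ children; the quantity we must bound is $\alpha:=P(\text{the root is }\infty\text{-good})$.

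First I would pass to finite depth. Say a node is \emph{$n$-good} if it is open and, for $n\ge 1$, at least $d$ of its $m$ children are $(n-1)$-good, and let $\beta_n$ be the probability that a given node fails to be $n$-good. Independence and self-similarity of the construction give $\beta_0=p_f$ and
\[
\beta_{n+1}=p_f+(1-p_f)\,P\!\left(\mathrm{Bin}(m,1-\beta_n)<d\right),\qquad n\ge 0 .
\]
The event $\{\mathrm{Bin}(m,1-\beta_n)<d\}$ is exactly the event that at least $m-d+1$ of the $m$ children fail to be $n$-good, so a crude union/binomial tail bound yields
\[
P\!\left(\mathrm{Bin}(m,1-\beta_n)<d\right)\le \binom{m}{d-1}\beta_n^{\,m-d+1}\le m^{\,d-1}\beta_n^{\,m-d+1}.
\]

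Next I would close the induction. The constants $A,B$ (hence $p_f$) are still free, while $m$ and $d$ are fixed once $N$ is fixed, so we may choose $A,B$ so large that $4m^{d-1}p_f<1$; this is compatible with, indeed implied by, the earlier requirement $P(\text{rules hold})\approx 1$. Then $\beta_n<2p_f$ for all $n$ by induction: $\beta_0=p_f<2p_f$, and if $\beta_n<2p_f$ then, using $m-d+1\ge 2$ and $2p_f\le 1$,
\[
\beta_{n+1}\le p_f+(1-p_f)\,m^{d-1}(2p_f)^{\,m-d+1}\le p_f+4m^{d-1}p_f^{\,2}<2p_f .
\]
Finally, $\beta_n$ is nondecreasing in $n$ (restricting an $(n+1)$-good witness to depth $n$ gives an $n$-good witness), and by König's lemma applied to the locally finite tree of finite $d$-ary open subtrees of the root ordered by restriction, an infinite $d$-ary open subtree from the root exists if and only if a depth-$n$ one exists for every $n$. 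Hence $\alpha=1-\lim_n\beta_n\ge 1-2p_f$, with strict inequality since $\lim_n\beta_n<2p_f$, which is the claim. The only delicate points are the passage from the finite truncations to the infinite tree and keeping the recursion trapped below $2p_f$; both are routine once $p_f$ is taken small, and making $p_f$ small costs nothing because $A$ and $B$ are at our disposal.
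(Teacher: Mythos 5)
Your proposal is correct and takes essentially the same route as the paper: dominate the surviving tree by an independent-site branching process, set up the recurrence $\beta_{n+1}=p_f+(1-p_f)P(\mathrm{Bin}(m,1-\beta_n)<d)$ for the finite-depth failure probabilities, bound the binomial tail, and close by induction keeping $\beta_n<2p_f$ once $A,B$ make $p_f$ small. The only cosmetic differences are that you spell out the passage to the infinite tree via K\"onig's lemma (which the paper leaves implicit in $\tau=\lim(1-q_m)$) and use the single dominating binomial term $m^{d-1}\beta_n^{m-d+1}$ where the paper writes the full sum $\sum_{j<d}\binom{M}{j}s^{M-j}$.
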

\begin{proof}
The process under scrutiny is dominated by a Galton Watson process because the death rules for one child is independent of the death rules for another child and the death probabilities are uniformly bounded by the estimates we have. The maximum number of descendats is $M = 2^{(N-4)/5}$.

Let $\tau$ be the probability that the GW tree $T$ has a d-ary subtree. Let $T_m$ be the tree $T$ truncated at level $m$ (i.e. after $m$ steps). Let $q_m$ be the probability that $T_m$ doesn't contain a d-ary subtree. Then 
\begin{eqnarray*}
\tau = \lim (1-q_m)
\end{eqnarray*}

In addition we have the recurrence relation: 
\begin{eqnarray*}
q_m=G_d(q_{m-1})
\end{eqnarray*}
where $G_d(s)$ is the probability that at most d-1 children are marked, when we mark each child independently (of each other and of the initial GW birth rules) with probability $1-s$. 

We have the following bound on $G_d(s)$:
\begin{equation}
G_d(s)\leq p_f+\sum_{j=0}^{d-1} C_M^j s^{M-j}
\end{equation}
If the probability, $p_f$, that an individual dies is small to start with (on the order of $C_M^{M/2} (2p_f)^{d-2} < 1$), then $ 1-\tau < 2p_f$.
\end{proof}

Another essential estimate is the following:
\begin{lemma}
$P(\mbox{there is a d-ary subtree, which survives rule 3}) >1-p_3$
\end{lemma}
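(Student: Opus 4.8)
# Proof Proposal for the Lemma on Surviving Rule 3

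The plan is to analyze rule 3 separately from rules 1, 2, 4, since (unlike the latter) the events in rule 3 are \emph{not} independent across the tree: the condition at a node $I$ involves the vaguelets attached to \emph{all} ancestors of $I$, so failures at different nodes are correlated through shared randomness. The key structural observation is that rule 3 decomposes into a sum over scales $n \le i$ of contributions $X_n(I) := \sup_{\theta\in\mathcal{J}(I)}\sum_{J\subset \mathcal{J}(Ans_n(I))\setminus\mathcal{J}(I)}(a_J\psi_J(\theta) - a_J\psi_J(\theta_I))$ (and its $\inf$ counterpart), where the $n$-th term depends only on the vaguelets living in the annular region between $\mathcal{J}(Ans_n(I))$ and $\mathcal{J}(Ans_{n-1}(I))$. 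Two crucial facts: first, for each fixed $n$, the term $X_n(I)$ depends only on randomness localized near the ancestor $Ans_n(I)$ and is essentially shared by all descendants of that ancestor; second, by the Borel--TIS estimate (exactly as in Lemma~\ref{GaussianFieldLemma} and Lemma~\ref{DistribIneqt}), the vaguelet decay gives $P(|X_n(I)| > Cu\, 2^{-n}) \le c(1+u)e^{-u^2/c}$, since the separation between $\mathcal{J}(I)$ and the $n$-th annular shell forces a gain of $2^{-n(q-1)}$ or better and the interval $\mathcal{J}(I)$ over which we sup has length comparable to the inner scale.

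The argument I would run is the following. Fix the infinite tree and, for each scale $n\ge 1$, consider the collection of "shell variables" indexed by nodes at level $i-n$ (i.e. by possible $Ans_n$); there are at most $d^{\,?}$ of them relevant at generation $i$, but more importantly, \emph{conditionally on the shell variables of scales $1,\dots,n-1$ being small}, the scale-$n$ shell variables attached to distinct ancestors are independent. So I would set up a union bound scale by scale: the probability that some node $I$ at generation $i$ fails rule 3 at scale $n$ is bounded by (number of generation-$(i-n)$ ancestors relevant) $\times\, c(1+B2^n)e^{-(B2^n)^2/(2c)}$. Because the Gaussian tail $e^{-cB^2 4^n}$ crushes any polynomial or even exponential-in-$n$ count of nodes (the tree grows like $d^i$ but $4^n$ in the exponent dominates once $B$ is a large constant), summing over all $n\ge 1$ and all generations $i$ gives a total failure probability $\le p_3$ which can be made as small as desired by taking $B$ large. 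Concretely: $\sum_{n\ge 1}\sum_{i\ge n} (\text{poly in } d^i)\cdot e^{-cB^2 4^n} < p_3$, and the double sum converges because the $e^{-cB^2 4^n}$ factor is summable against any geometric growth once $B$ is large enough, leaving $p_3 = p_3(B)\to 0$ as $B\to\infty$.

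To organize the "conditionally independent across ancestors at each scale" claim cleanly, I would phrase it as: rule 3 holds for \emph{every} node of the infinite tree if and only if for every node $A$ of the tree and every one of its descendants $I$ exactly $n$ levels below, the shell variable $X_n$ (depending on the randomness in $\mathcal{J}(A)\setminus\mathcal{J}(Ans_{n-1}(I))$, localized at $A$) evaluated over the sub-interval $\mathcal{J}(I)\subset\mathcal{J}(A)$ stays within $B2^{-n}$ of its value at the relevant center. For a fixed pair (scale $n$, ancestor $A$), the single Gaussian field $\sum_{J\subset\mathcal{J}(A), |J|\le |Ans_{n-1}|}a_J\psi_J$ has a supremum over $\mathcal{J}(A)$ with sub-Gaussian tail, and the event that its oscillation exceeds $B2^{-n}$ on \emph{any} of the $\lesssim d^{\,n}$ sub-intervals at depth $n$ is still controlled by a single Borel--TIS estimate for that field (the sup over the bigger set $\mathcal{J}(A)$ dominates), giving probability $\le c(1+B)e^{-cB^2 4^n}$ \emph{per ancestor $A$}, independently of its descendants. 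Then I union-bound over all ancestors $A$ in the tree: for ancestors sitting $m$ generations below the root there are $\lesssim d^m$ of them, each contributing scale-$n$ failures for the $d^n$-worth of descendants at depth $n$ below $A$; bundling $m+n = i$ and summing, the $e^{-cB^2 4^n}$ again dominates, and we get $P(\text{rule 3 fails somewhere}) \le p_3$.

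The main obstacle I expect is being careful about the dependence structure: unlike rules 1, 2, 4, the shell variables at a \emph{common} ancestor are shared by many descendants, so one must not naively multiply per-node failure probabilities. The fix (and the content of the lemma) is precisely that the relevant count is the number of \emph{ancestor-scale pairs}, not the number of nodes, and that a single Borel--TIS bound per ancestor handles all its descendants at that scale simultaneously; the rapidly-decaying Gaussian tails $e^{-cB^24^n}$ then absorb the geometric growth $d^i$ of the tree with plenty of room to spare, which is why this step does \emph{not} require delicate tuning of the sequence $\{n_k\}$ the way the $L_{i,k}$-estimates did — it only needs $B$ large.
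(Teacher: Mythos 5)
Your structural observations are right -- the rule-3 events are correlated across nodes through shared ancestors, and the right way to organize the bookkeeping is by ancestor-scale pairs, with the vaguelet decay / Borel--TIS giving a super-exponential gain $e^{-cB^2 2^{n(N-6)}}$ at scale $n$ that crushes the $M^n$ count of depth-$n$ descendants of a fixed ancestor. But the final step of your argument -- the union bound over \emph{all} ancestors $A$ at all depths $m$ -- does not converge, and this is not a matter of sharpening constants.

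Concretely, the per-ancestor, per-scale failure probability $p_n$ is independent of the depth $m$ of the ancestor $A$; there is no mechanism by which it decays in $m$. So the double sum you propose,
\begin{equation*}
\sum_{m\geq 0}\;\sum_{A:\ \mathrm{depth}\ m}\;\sum_{n\geq 1} p_{A,n} \;\leq\; \sum_{m\geq 0} M^m \sum_{n\geq 1} p_n,
\end{equation*}
diverges because of $\sum_m M^m$, no matter how large $B$ is. This is not a technical glitch: the event ``rule~3 holds at \emph{every} node of the infinite tree'' has probability zero (eventually some node at some depth fails), so any argument aiming to bound $P(\text{rule 3 fails somewhere})$ is doomed. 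What the lemma actually asserts is the existence of a $d$-ary \emph{subtree} that survives, which is a strictly weaker (and the only achievable) target.

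The paper gets this by a branching/recursion argument, not a union bound. It splits rule~3 into the sub-rules indexed by $n$, notes that each sub-rule is vacuous for generations $<n$, and sets up for each fixed $n$ a recursion on $q_{n,m}$ (the probability that the depth-$m$ truncated tree has no $d$-ary subtree surviving rule $n$). The two bad events are (i) some generation-$n$ node fails the rule~$n$ condition relative to the root (probability $\leq p_n M^n$, a union bound over only $M^n$ nodes -- finite), and (ii) fewer than $d$ of the $M$ generation-1 subtrees contain a $d$-ary surviving subtree; conditionally on the root's generations $1,\dots,n$ surviving, those subtrees are independent (and stochastically dominated by the critical-variance tree), giving $q_{n,m}\leq p_nM^n+\sum_{i=0}^{d-1}\binom{M}{i}q_{n,m-1}^{M-i}$, hence $q_n\leq 2p_nM^n$ in the limit. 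Then one sums over $n$, not over depth $m$: $P(\text{no surviving $d$-ary subtree for rule 3})\leq 2\sum_n p_n M^n$, which is small for $B$ large. The recursion is precisely what replaces the divergent $\sum_m M^m$ in your union bound. To repair your proposal you would need to import this Galton--Watson-style recursion; the Borel--TIS ingredient you use is the same as the paper's, so the missing piece is only the branching argument.
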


\begin{proof}
We divide rule 3 in infinitely many rules. Rule $n$ is: $I$ survives if 
\begin{eqnarray}\label{SurvivalCondition3}
\sup_{\theta\in \mathcal{J}(I)}\sum_{J\subset \mathcal{J}(Ans_n((I))\setminus\mathcal{J}(I)}a_J\psi_J(\theta)  - a_J\psi_J(\theta_I)  \leq B 2^{-n}\\
\sup_{\theta\in \mathcal{J}(I)}\sum_{J\subset \mathcal{J}(Ans_n((I))\setminus\mathcal{J}(I)}a_J\psi_J(\theta)  - a_J\psi_J(\theta_I)  \geq -B 2^{-n}
\end{eqnarray}
where $\theta_I$ is the center of $I$. Notice that for the intervals of generation $0, 1,\ldots, n-1$ the rules are satisfied by default. In addition, the intervals on level $n$ are pretty much perfectly correlated with one another. The intervals on level $n+1$ are correlated in groups of $2^{(N-4)}/5$, but these groups are independent of one another. 

Let $T_m$ be the tree after $m$ steps. Let $q_{n,m}$ be the probability that $T_m$ has no d-ary subtree which survives rule $n$.  We seek a recurrence relation on $q_{n,m}$.

If $T_m$ has no d-ary subtree, one of two things must have gone wrong:
\begin{itemize}
\item the descendants up to level $n$ die (call this event $Bad_1$), or 
\item the descendants up to level $n$ survive, but less than $d$ of the generation 1 descendants have a d-ary subtree (call this event $Bad_2$).
\end{itemize}

There are a total of $M = 2^{N-4}/5$ possible descendants in generation 1 and $M^n$ in generation $n$. The probability one of these dies is
\begin{equation}
P(sup(\sum)> B 2^{-n})\leq (1+B 2^{n((N-4)/2-1)}) e^{-\frac{B^2 2^{2n((N-4)/2-1)}}{2}}=: p_n
\end{equation}
This follows from an argument similar to the one in lemma (\ref{DistribIneqt}). It's a consequence of the Borel-TIS inequality applied to a centered Gaussian field which covers dyadic levels up to $\sim 2^{-(n-1)(N-4)}$ over a set of size $\sim2^{-(N-4)}$ (and behaves thus as a centered Gaussian random variable with variance $C2^{-n(N-4)/2}$).

We can now conclude that $P(Bad_1)\leq p_n M^n$.

The second bad event can be bounded as follows:
\begin{equation}
P(Bad_2)\leq \sum_{i=0}^{d-1} C_n^i (q_{n,m-1})^{k-i}.
\end{equation}
This holds for two reasons. First, once we condition on the survival of generations $1,\ldots,n$, the subtrees starting at generation 1 nodes are independent. Secondly, although they don't have exactly the same law as the original tree, they are stochastically dominated by a tree which follows the same rules with random variables $a_J\sim N(0,t_c)$. 

We may thus write 
\begin{equation}
q_{n,m}\leq (1+B 2^{n((N-4)/2-1)}) e^{-\frac{B^2 2^{2n((N-4)/2-1)}}{2}}\left(\frac{2^{N-4}}{5}\right)^n + \sum_{i=0}^{d-1} C_n^i (q_{n,m-1})^{k-i}
\end{equation}

One can easily see that this recurrence relation implies that $q_n = \lim_m q_{n,m}\leq 2p_n M^n$, which in turn implies:
\begin{equation}
P(\mbox{no d-ary subtree, which survives rule 3})\leq 2\sum_n p_n M^n
\end{equation}
which can be made arbitrarily small by choosing $B$ large enough.
\end{proof}

Putting the last couple of lemmata together and using the notation $p=p_3 +p_{f}$ we get:
\begin{lemma}
$P(\mbox{there is a good d-ary surviving subtree}) >1-p.$
\end{lemma}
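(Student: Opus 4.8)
The statement is meant to be the synthesis of the two preceding lemmata, so the plan is to (i) observe that the randomness governing rules $1,2,4$ is independent of the randomness governing rule $3$, (ii) run each of the two branching arguments at an inflated arity $\sim M/2$ rather than at $d$, and (iii) intersect the two resulting surviving subtrees.

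First I would isolate the two sources of randomness. Rules $1$, $2$ and $4$ are statements about the measure $\nu$ and the Gaussian field built from the vaguelets $\psi_J$ with $J\subset\mathcal J(I)$, i.e. local to a node $I$ and to the scales strictly below it. Rule $3$, by contrast, only involves the coefficients $a_J$ with $J\subset\mathcal J(Ans_n(I))\setminus\mathcal J(I)$, i.e. vaguelets living strictly between $I$ and its ancestors. These two families of i.i.d. Gaussian coefficients are disjoint, hence independent. Consequently the event $G_{124}$ that the set of nodes satisfying rules $1,2,4$ contains a $d_1$-ary subtree, and the event $G_{3}$ that the set of nodes satisfying rule $3$ contains a $d_2$-ary subtree, are independent. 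By the two preceding lemmata, whose proofs go through verbatim with $d$ replaced by any arity for which the relevant smallness conditions still hold (namely $C_M^{M/2}(2p_f)^{d_1-2}<1$ for rules $1,2,4$ and $\sum_n p_nM^n$ small for rule $3$, both valid for any $d_1,d_2\le M$ once $A,B,N$ are large), we get $P(G_{124})\ge 1-2p_f$ and $P(G_3)\ge 1-p_3$, whence $P(G_{124}\cap G_3)\ge 1-2p_f-p_3$.

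Next, on the event $G_{124}\cap G_3$, fix a $d_1$-ary subtree $S_1\subset\{\text{rules }1,2,4\text{ hold}\}$ and a $d_2$-ary subtree $S_2\subset\{\text{rule }3\text{ holds}\}$, both rooted at the root $\rho$ of the stopping-time tree. Let $S:=S_1\cap S_2$ be the subgraph on the nodes lying in both. Then $\rho\in S$, and every $v\in S$ has at least $d_1$ children in $S_1$ and at least $d_2$ children in $S_2$, hence at least $d_1+d_2-M$ children in $S$, where $M=2^{N-4}/5$ is the maximal branching. Choosing $d_1=d_2=\lceil (M+d)/2\rceil$ yields $d_1+d_2-M\ge d$, so $S$ contains a $d$-ary subtree every node of which satisfies rules $1,2,3,4$ — a good $d$-ary surviving subtree. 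This gives $P(\text{there is a good }d\text{-ary surviving subtree})\ge 1-2p_f-p_3$, which with $p=p_3+p_f$, after absorbing the harmless factor $2$ into the universal constant $C$ in the definition of $p_f$, is the claimed bound $1-p$.

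The genuinely hard part of this section is not this lemma but the control of rule $3$ across infinitely many scales carried out just above: there the long-range correlations of the Gaussian field force one to split rule $3$ into the level-$n$ sub-rules and to race the $M^n$ growth of the number of relevant nodes against the super-Gaussian decay $p_n=(1+B2^{n((N-4)/2-1)})e^{-B^22^{2n((N-4)/2-1)}/2}$. Granting that, the present statement is essentially bookkeeping; the one point that needs a little attention is that the two branching arguments must be run at arity $\sim M/2$ so that the intersection of the two surviving subtrees remains $d$-ary, which is legitimate precisely because $p_f$ and the $p_n$ are small enough that the Galton–Watson and rule-$3$ recurrences survive down to that arity.
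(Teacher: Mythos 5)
Your proof is correct, and it supplies a genuine argument where the paper gives none: the text simply says ``putting the last couple of lemmata together'' and records $p=p_3+p_f$. You are right that the two preceding lemmata cannot be combined by a naive union bound, since the existence of a $d$-ary subtree on which rules $1,2,4$ hold and of a (possibly entirely different) $d$-ary subtree on which rule $3$ holds does not produce a single $d$-ary subtree on which all four rules hold. Running both branching arguments at the inflated arity $d_1=d_2=\lceil (M+d)/2\rceil$, using only the elementary union bound $P(G_{124}\cap G_3)\ge 1-P(G_{124}^c)-P(G_3^c)$, and then observing that every node of $S_1\cap S_2$ retains at least $d_1+d_2-M\ge d$ common children is exactly the right repair, and the fact that both recurrences still close at arity $\approx M/2$ (once $A,B,N$ are taken large so that $p_f$ and $\sum_n p_n M^n$ are small) is correctly checked. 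This is at least as strong as, and more careful than, what the paper asserts.

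Two small points. First, the opening independence claim --- that $G_{124}$ and $G_3$ are independent because the vaguelet families are disjoint --- is not established and is in fact doubtful at the level of the whole tree: for a node $I$ and a descendant $I'$, rule $3$ at $I'$ sums over $J\subset\mathcal J(Ans_n(I'))\setminus\mathcal J(I')$, a region which can intersect $\mathcal J(I)$ and hence the vaguelets entering rules $1,2,4$ at $I$. Fortunately you never use independence; the argument runs entirely on the union bound, so this is a harmless overstatement that should simply be dropped. Second, the parenthetical ``both valid for any $d_1,d_2\le M$'' is too strong: at $d=M$ the Galton--Watson recurrence $q_m=G_M(q_{m-1})$ gives $G_M(s)\approx p_f+1-(1-s)^M$, which does not contract to a small fixed point no matter how small $p_f$ is. The claim holds for $d_1,d_2\le (1-c)M$ with some fixed $c>0$, which is all you need since $\lceil(M+d)/2\rceil$ is roughly $M/2$ for $d$ bounded.
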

Notice that if we take $A\sim e^{5B}$ the probability $p<\frac{C}{A}$.
By "good" we mean that the distortion is controlled. Explicitely, the distortion is $D_{t_{k+1}} < C A A_{t_{k+1}} 2^N e^{5B}\sim CA^2A_{t_{k+1}}$ for levels $[2^{-n_k},2^{-n_{k-1}+1}]$. $C$ is a universal constant (depends only on the properties of the vaguelets).

%
%
%

\section{Modulus estimate}
\label{SectionModulusEstimate}
In this section we are concerned with a deterministic modulus estimate. This modulus estimate is similar to some of the estimates of J. C. Yoccoz on the local connectivity of the Mandelbrot set (see e.g. \cite{H92}, \cite{M00}). 

Assume we have a mapping $G:\BBC\rightarrow\BBC$ with distortion $\mu$ and an annulus $A$ centered on the real axis(for simplicity we will use a dyadic annulus of radii 1 and 2). Assume $\mu = 0$ in the lower half plane. 

Assume the distortion in the top part of the annulus is bounded by $D$ and that in the two sides we have connected sub-domains $\Omega_{1}$ and $\Omega_{2}$ of $A$. $\Omega_j$ is constructed in a stopping time fashion on a dyadic grid. At level $2^{-Ni}$ the number of surviving intervals is $d^i$ and the distortion there is bounded by $D_i$. One should think of $\Omega_j$  converging to a Cantor set $E_j$ on $\BBR\cap A$. See figure \ref{Modulus}.

\begin{figure}[h!]
  \centering
    \includegraphics{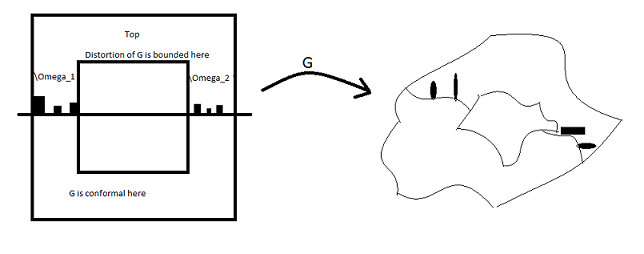}
 \caption[Distortion in annulus]{The annulus $A$ and the mapping $G$. The black regions are the ones where we have no control on the distortion.}
  \label{Modulus}
\end{figure}

The next theorem is a generalization of the following statement: if $G$ is conformal in the top of $A$ and inside $\Omega_1\cup\Omega_2$ then $mod(F(A))>c_0$ which depends on the logarithmic capacity of the sets $E_1, E_2$.

\begin{theorem}
$Mod(G(A))> \alpha = const.$
\end{theorem}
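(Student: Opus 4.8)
The plan is to estimate $Mod(G(A))$ from below by constructing an admissible metric $\rho$ for the curve family that separates the two boundary components of $G(A)$, or equivalently (by the reciprocal characterization of the modulus of an annulus) by bounding from above the modulus of the family $\Gamma$ of curves \emph{connecting} the two boundary circles of $G(A)$ and using $Mod(G(A)) = 1/\mathrm{mod}(\Gamma)$ up to the conformal normalization. Concretely: any curve $\gamma$ crossing $G(A)$ pulls back under $G^{-1}$ to a curve crossing $A$; such a curve must pass either through the top of the annulus (where the distortion is bounded by $D$) or must thread its way between the two stopping-time Cantor-set towers $\Omega_1$ and $\Omega_2$ sitting over $E_1, E_2 \subset \BBR \cap A$. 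The key point is that a crossing curve avoiding the controlled region near the top is forced to separate $E_1$ from $E_2$ inside the lower part, and hence must have large extremal length because it has to ``go around'' the logarithmic capacity of each Cantor set.

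First I would set up the right comparison annuli. Inside the top half of $A$, since the distortion is bounded by $D$, the quasiconformal distortion inequality $Mod(G(A'))\ge Mod(A')/D$ applied to a fixed sub-annulus $A'\subset A$ lying in the upper half plane already gives a bounded contribution $\ge c(D)>0$; the subtlety is that $c(D)\to 0$ as $D\to\infty$, so this alone is not enough and we genuinely need the side contributions. Next I would exploit the stopping-time structure of $\Omega_j$: at level $2^{-Ni}$ there are $d^i$ surviving intervals and distortion bounded by $D_i$. I would build, scale by scale, a metric $\rho = \sum_i \rho_i$ where $\rho_i$ is supported on the ``good'' boxes at generation $i$ and calibrated so that $\int \rho_i\,ds \gtrsim 1/(something)$ along any curve passing through generation $i$, while $\int \rho_i^2\,dA$ is controlled by $D_i$ times the area of the good boxes, which is $\sim d^i \cdot (2^{-Ni})^2$. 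The series $\sum_i d^i 2^{-2Ni} D_i$ converges provided $D_i$ does not grow too fast relative to $d$ and $N$ — precisely the content of the earlier sections is that one can arrange $D_i \lesssim C A^2 A_{t_{k+1}}$, which is subexponential compared to the geometric decay $d^i 2^{-2Ni}$ for suitable $N$. Summing gives a \emph{finite} bound on $\int_A \rho^2\,dA$, and one checks $\rho$ (suitably normalized) is admissible for the separating family, which yields the lower bound $Mod(G(A))>\alpha$ with $\alpha$ a universal constant once $N, d$ are fixed appropriately.

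The cleanest way to organize this is probably via the classical estimate linking the modulus of an annulus with a Cantor set removed to the logarithmic capacities of the two Cantor sets: if $E_1, E_2$ are compact subsets of $\BBR\cap A$ of positive capacity sitting in the two ``sides,'' then the modulus of the family separating $\{|z|=1\}$ from $\{|z|=2\}$ within $\mathbb{C}\setminus(E_1\cup E_2)$ is bounded below by a constant depending only on $\mathrm{cap}(E_1), \mathrm{cap}(E_2)$ and the geometry. Since $G$ is quasiconformal off the black regions and those black regions are confined to a controlled neighborhood of the real axis between the towers, $G$ maps the separating family for $A\setminus(E_1\cup E_2)$ to a family comparable (up to the finitely many bounded-distortion factors $D, D_i$ assembled in a convergent product/sum) to the separating family for $G(A)$. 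Here I would invoke the quasi-invariance of modulus in the regions of bounded distortion and the stopping-time bookkeeping from Section~\ref{SectionStoppingTime} to absorb the $D_i$'s.

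The main obstacle is controlling the interaction between the unbounded distortion in the black regions and the geometry of the Cantor towers: one must verify that curves are genuinely \emph{forced} to avoid the black boxes (or pay an infinite cost there), which requires that the black regions be ``thin'' in a capacitary sense — i.e., that the surviving good boxes at each scale carry enough of the crossing to make the admissible metric finite. This is exactly where the interleaving/alternating selection of the marked subintervals $I_k^j$ (guaranteeing four unmarked buffer boxes between consecutive marked ones) pays off: it ensures the good region at generation $i$ contains a definite ``moat'' around each bad box, so a curve crossing $A$ either stays in good boxes (bounded distortion, finite $\rho_i$-cost) or is trapped between towers and must have large length. Making the series $\sum_i d^i 2^{-2Ni} D_i < \infty$ quantitative — and checking the resulting constant $\alpha$ is independent of $n$ (the scale-chunk index) — is the delicate accounting step, but it reduces to the distortion bound $D_{t_{k+1}} \lesssim CA^2 A_{t_{k+1}}$ already established, combined with a large enough fixed choice of $N$.
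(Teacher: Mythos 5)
Your plan is essentially the paper's: pass to the reciprocal family $\tilde\Gamma$ of curves connecting the two boundary components, pull back to $A$, construct an explicit admissible "metric" whose $L^2$-cost picks up the distortion $D(z)$ via the change of variables $\rho^2\,dA(w) = |\nabla u|^2 D(z)\,dA(z)$, decompose scale-by-scale along the Cantor tower, and ask that the resulting series in $D_i$ converge. The paper even mentions and then discards the logarithmic-capacity route you offer as the "cleanest" alternative, exactly because it only applies cleanly when $G$ is conformal off the Cantor sets and here one wants the unbounded $D_i$ to enter linearly. So conceptually you are on the same track.

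However there is a concrete error in your key estimate, and it is the heart of the proof. You write $\int\rho_i^2\,dA \sim D_i\cdot(\text{area of good boxes}) \sim D_i\, d^i 2^{-2Ni}$, which implicitly takes $\rho_i\sim 1$ on the good boxes. But then $\rho = \sum_i\rho_i$ is not admissible: a horizontal crossing at height $y\sim 2^{-Ni}$ spends only total $x$-length $\sim d^i 2^{-Ni} \ll 1$ in the level-$i$ good intervals, so $\int_\gamma\rho\,ds\ll 1$. Admissibility forces $\rho_i\sim 2^{Ni}d^{-i}$ on those intervals (so that the horizontal integral across each height is $\gtrsim 1$; this is exactly the paper's choice $f_y = 2^{Ni}d^{-i}$). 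With the correct normalization the cost is
\begin{equation*}
\int \rho_i^2 D\,dA \;\sim\; D_i\,\bigl(2^{Ni}d^{-i}\bigr)^2\cdot d^i 2^{-Ni}\cdot 2^{-Ni} \;\sim\; D_i\, d^{-i},
\end{equation*}
not $D_i\, d^i 2^{-2Ni}$. Since $d\sim 2^{N-4}/5$, your version of the series decays by an extra factor $\sim(80)^{-2i}$ and would therefore tolerate exponentially larger $D_i$ than is actually allowed — a false sense of security. The convergence condition you need, and the one the paper's choices are engineered to meet, is $\sum_i D_i d^{-i}<\infty$. A related smaller imprecision: the paper's metric does not require curves to be "forced to avoid bad boxes" or to "have large length"; the metric is simply zero on the bad boxes and the horizontal mass on the surviving intervals at each height already gives $\int_\gamma\rho\geq 1$ for any crossing curve, so no moat/capacity argument is needed in the direct proof.
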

\begin{proof} One way to prove this result is to construct two closed curves inside $G(A)$ which are neither too long, nor too close to one another. When $G$ is conformal (as above) this is a consequence of Pfluger's theorem (see for example \cite{BB99} for a reference) and a modulus estimate. 

We will use a more direct argument here. We know that $mod G(A) = \frac{1}{mod G(\tilde \Gamma)}$, where $\tilde \Gamma$ is the family of curves which connect the two components of the complement of $G(A)$. We want to obtain an upper bound on $mod G(\tilde \Gamma)$ and we do this by constructing a good metric in G(A).

We consider the metric $\rho$ on $G(A)$: $\rho(w) = \frac{|\nabla u(z)|}{|G_{z}|-|G_{\overline{z}}|}$ where $G(z)=w$ and $u$ is defined below. 

Then $mod G(\tilde \Gamma) \leq \int\int_\Omega \rho^2 dA(w) \leq \int\int_{G(\Omega)} |\nabla u|^2 D(z) dA(z)$, where $D(z)$ is the distortion of $G$. 

For this argument to work we need to take $u$ such that $\int_{\gamma}|\nabla u| |dz| \geq 1$ for all $\gamma\in\tilde\Gamma$.
In $\Omega_2$ we take $u(x,y) = (\int_1^x f_y(t)dt,y)$ and similarly for $\Omega_1$. For each $y$ the density $f_y(t)$ puts all the mass uniformly on the surviving intervals defining $\Omega_j$ and zero in the regions where we have no control on the distortion. We may say that for $y\in [2^{-Ni}, 2^{-Ni+N}]$, $f_y(t) = 2^{Ni}d^{-i}$ on the intervals defining $\Omega_2$. Set $\nabla u = 1$ at all points where the distortion is bounded (the lower half of $A$  and the top of $A$). 

Now we have $\int_{\gamma} |\nabla u||dz|\geq |\int_{\gamma} \nabla u \cdot dz| \geq1$ so $|\nabla u|$ is an admissible metric.

Now we have that 
\begin{eqnarray*}
mod(G(\tilde\Gamma))\leq C \int_0^1\int_1^2 f_y^2(x) D(x,y) dx dy + C\\
\leq C\sum_i \int_{2^{-Ni}}^{2^{-N(i-1)}}\int_1^2 f_y^2(x)D_i(x) dx dy  + C\\
\leq C+ C\sum_i 2^{-Ni} D_i 2^{2Ni}d^{-2i} 2^{-Ni}d^{i} =C+ C\sum_i D_i d^{-i}
\end{eqnarray*}

So if $D_i$ increases slow enough the modulus will be bounded by a constant. 

\end{proof}

We will apply this theorem to an annulus where the distortion is controlled by $D_{t_{k+1}} < C A A_{t_{k+1}} 2^N e^{5B}\sim CA^2A_{t_{k+1}}$ for levels $[2^{-n_k},2^{-n_{k-1}+1}]$ to get $Mod(G(A(z,r,2r)))> \frac{C}{A^2 A_{k_0}}$. The constant $A_{k_0}$ corresponds to the level $r \sim 2^{-n_{k_0}}$. We are allowed to apply this theorem because the following relation holds
\begin{eqnarray}
\sum_{i=1}^{\infty} A_{k_0+i} d^{-\frac{n_{k_0+i}-n_{k_0+i-1}}{N}} < \infty \label{ModulusCondition}
\end{eqnarray}
which is a lot stronger than the necessary relation
\begin{eqnarray}
\sum_{i=1}^{\infty} A_{k_0+i} d^{-\frac{n_{k_0+i}-n_{k_0}}{N}} < \infty. 
\end{eqnarray}
Inequality (\ref{ModulusCondition}) holds because $A_{k}\sim C_{neg}^{k^{3\gamma k^\gamma} }$ while $n_{k}-n_{k - 1}\sim  C_{neg}^{k^{3\gamma k^\gamma} }$. In addition $d \sim 2^{N-5}$.

We conclude this section with the following summary: if we run the stopping time algorithm for an annulus at scale $2^{-n_{k_0}}$ and obtain the surviving d-ary tree, the distortion of the modulus of image of the annulus is given by $A^2A_{k_0}\sim A^3 C_{neg}^{k^{3\gamma k^\gamma}}$.

\section{Putting it all together}

\label{SectionMainProbabilisticEstimateRevisited}

We are now in a position to give an outline of the proof of the main probabilistic estimate. Fix $n$ and $z$. $\tilde\rho_n$ is fixed. We look of for $\rho_n, N_n,b_n$ and $c_n$ such that (\ref{MainProbEstimate}) holds.

If all of the following events hold for a large number of annuli
\begin{itemize}
\item $t_{i,k}, tr_{i,k}$ are small
\item There is an infinite d-ary surviving subtree in region $R_{i,1}$.
\item $L_{i,k},s_{i,k,j}$ are small,
\end{itemize}
the sum of the moduli is big. Small sum of moduli implies a large number of failures for at least one of these rules. We have probabilistic estimates for each such failure. We will treat each rule separately and due to the decoupling we will be able to use the independence of variables corresponding to some of the scales. 

We now proceed with the argument. 

Define the following indicator functions of events:
\begin{eqnarray*}
\chi_1(i) := \prod_{k=0}^{i-1}\chi(t_{i,k} < 2^{k-i}\ln D_n)\\
\chi_2(i) := \chi(tr_{i,i} < \ln D_n)\\
\chi_3(i) := \prod_{k>i}^{N_n}\chi(tr_{i,k}<2^{i-k}\ln D_n)\\
\chi_4(i) := \prod_{j=i+1}^{\infty}\prod_{k=i+1}^{\min\{N_n,j-1\}}\chi(s_{i,k,j}<2^{k-j}\ln (D_n 2^{k-i} ))\\
\chi_5(i) := \chi(L_{i,i} < D_n)\\
\chi_6(i) := \prod_{k=i+1}^{N_n}\chi(L_{i,k}<4^{i-k} D_n)\\
\chi_7(i) := \chi(\mbox{there is a good surviving d-ary subtree with }A^2A_{k_0}  = D_n)
\end{eqnarray*}
If $\prod_{l=1}^7\chi_l(i)  =1 $ then $Mod(G(A(z,\tilde\rho_n\rho_n^{i},2\tilde\rho_n\rho_n^{i}/2))) > cD_n^{-3}$, where $c$ is the constant from the modulus estimate and is independent of $n$ and $i$.

We divide all these events in three categories. Category I contains events of type 2, 5 and 7. Category II events are those of type 6 and all others are Category III. 

The event $\sum_{i=1}^{N_n}Mod(G(A_i)) < \alpha_n cD_n^{-3} N_n$ can happen for two reasons.
\begin{itemize}
\item[R1.] either there is a set of $2\alpha_n N_n$ of indices $i$ for which all events of category II and III take place, but more than $\alpha_n N_n$ events of category I don't;
\item[R2.] or no $2\alpha_n N_n$ indices $i$ exist for which all category II and III events take place. This can happen for two reasons. 
\begin{itemize}
\item[a.] either for more than $1-4\alpha_n N_n$ indices $i$ category II events do not take place, 
\item[b.] or for more than $2\alpha_n N_n$ indices $i$ category III events do not take place. 
\end{itemize}
\end{itemize}

The following inequalities hold because of the independence of the $\chi_l(i)$ for those particular values:
\begin{eqnarray}
P(R1) \leq C2^{N_n}\left((1+ \ln (D_n))e^{-\ln^2D_n/2}\right)^{\alpha_n N_n/3}\\
+ 2^{N_n}\left(\frac{C_n}{D_n}\right)^{\alpha_n N_n/3}+ 2^{N_n}\left(\frac{C}{\sqrt{D_n}}\right)^{\alpha_n N_n/3}
\end{eqnarray}
The variable $C_n$ is the same which appears in lemma (\ref{DistribIneqL}) and comes from the bound on the negative moments.

We get a bound on $P(R2.a) $ by following the argument in \cite{AJKS09} using lemmata \ref{DistribIneqL} and \ref{DistribIneqt}:
 $$
2^{N_n}2C_nD_n^{-1} \rho_n^{-(2+3a_n/2)}\rho_n^{(1+2b_n)N_n}
$$
 whenever $\rho_n$ is chosen small enough to have $4\rho_n^{a_n/2} < 1/2$ and $D_n$ is large enough to have $C_nD_n^{-1}\rho_n^{-(2+3a_n/2)} <1/2$ ($C_n$ is the constant which appears in lemma (\ref{DistribIneqL})). In addition $\alpha_n$ and $b_n$ have to satisfy $1+2b_n = (1+a_n/2)(1-4\alpha_n)$.

We present here the estimates for the sake of completion:
\begin{eqnarray}
P(R2.a)\leq \sum_{|S|=p}E[\prod_{i\in S} \chi_6^C(i)]
\end{eqnarray}
where $p=1-\alpha_nN_n$ and $S$ is the set of indices where the events of type 6 fail. We introduce the notation
$$
\chi_6(i,k) := \chi(L_{i,k}<4^{i-k} D_n)
$$
The failure of a $\chi_6(i)$ is due to some $k$, but it is possible that two different $i$'s fail because of the same $k$. We have the bound:
\begin{eqnarray}\label{EstimateType6Decoupled}
E[\prod_{i\in S} \chi_6^C(i)] \leq \sum_{r=1}^p\sum_{(l_1,\ldots ,l_r)}E[\prod_{j=1}^r \chi_6^C(i_j, i_j+l_j)]
\end{eqnarray}
where $i_1$ is the smallest $i\in S$ such that $\chi_6(i_1) = 0$ and level $i_1 + l_1$ causes $\chi_6(i_1)$ to fail. More generally, $i_{j+1}$ is the smallest index $i\in S$ larger than $i_j+l_j$ and level $i_{j+1}+l_{j+1}$ causes $\chi_6 (i_{j+1})$ to fail. 

The intervals $[i_j, i_j +l_j]$ cover the set $S$ and this implies 
$$
\sum_{j=1}^r l_j \geq p-r
$$
Applying lemma \ref{DistribIneqL} we get:
\begin{eqnarray}
E[\chi_6^C(i_j, i_j+l_j)]\leq C_n 4^{l_j}D_n^{-1}\rho_n^{(l_j-1)(1+a_n)}= C_nD_n^{-1} \rho_n^{-(1+a_n)} \left(4\rho_n^{1+a_n}\right)^{l_j}
\end{eqnarray}

The terms on the right hand side of (\ref{EstimateType6Decoupled}) are independent so we get:
\begin{eqnarray}
E[\prod_{i\in S} \chi_6^C(i)] \leq  \sum_{r=1}^p \left(C_n D_n^{-1}\rho_n^{-(1+a_n)}\right)^r\sum_{(l_1,\ldots ,l_r)}  \left(4\rho_n^{1+a_n}\right)^{\sum l_j} \\
\leq \rho_n^{(1+a_n/2)p}\sum_{r=1}^p \left(C_nD_n^{-1} \rho_n^{-(2+3a_n/2)}\right)^r\sum_{(l_1,\ldots ,l_r)}  \left(4\rho_n^{a_n/2}\right)^{\sum l_j}
\end{eqnarray}
where we have used the fact that $\sum l_j \geq p-r$. We now drop the constrains on $l_j$ to get:
\begin{eqnarray}
E[\prod_{i\in S} \chi_6^C(i)] \leq \rho_n^{(1+a_n/2)p} \sum_{r=1}^p \left(C_nD_n^{-1} \rho_n^{-(2+3a_n/2)}\right)^r\left(\sum_{l=1}^\infty  \left(4\rho_n^{a_n/2}\right)^{l}\right)^r
\end{eqnarray}
If we take $\rho_n$ small enough to have $4\rho_n^{a_n/2}<1/2$ and $D_n$ large enough to have $C_nD_n^{-1} \rho_n^{-(2+3a_n/2)} <1/2$ we get:
\begin{eqnarray}
E[\prod_{i\in S} \chi_6^C(i)] \leq 2C_nD_n^{-1} \rho_n^{-(2+3a_n/2)}\rho_n^{(1+a_n/2)p}\leq  \rho_n^{(1+a_n/2)p}
\end{eqnarray}

If we now take $\alpha_n$ small enough such that $1+2b_n = (1+a_n/2)(1-4\alpha_n)$ we get 
\begin{eqnarray}
E[\prod_{i\in S} \chi_6^C(i)] \leq 2C_nD_n^{-1} \rho_n^{-(2+3a_n/2)}\rho_n^{(1+2b_n)N_n}
\end{eqnarray}
and 
\begin{eqnarray}
P(R2.a)\leq 2^{N_n}2C_nD_n^{-1} \rho_n^{-(2+3a_n/2)}\rho_n^{(1+2b_n)N_n}
\end{eqnarray}

This concludes our bound on the failure of events of type 6.

Failure of events of type 1 and 3 can be bounded as in  \cite{AJKS09}. We use a similar argument now to take care of events of type 4. The estimate we get for type 4 events also bounds the failure of type 1 and 3 events.

The probability that more than $\alpha_nN_n$ events of type 4 fail is bounded by
\begin{eqnarray}
\sum_{|S|=p}E[\prod_{i\in S} \chi_4^C(i)]
\end{eqnarray}
where $p = \alpha_n N_n$ and $S$ is a subset of indices $i$ of cardinality $p$. The letter $C$ in the exponent of the indicator function refers to the complement/failure of the respective event. 

Each variable $\chi_4(i) = 0$ because of some bad event on level $j$. Although $i's$ are different, the problematic levels $j$ might not be; the same problematic level might cause problems for several $i$. This leads us to the following bound:
\begin{eqnarray}
E[\prod_{i\in S} \chi_4^C(i)] \leq \sum_{r=1}^p \sum_{m_1+\ldots+m_r = p}\left(\sum_{j_1 = i_1+m_1}^{\infty}\chi_4^C(i_1,j_1)\right)\cdots\left(\sum_{j_r = i_r+m_r}^{\infty}\chi_4^C(i_r,j_r)\right)\nonumber\\
\mbox{where  }\chi_4(i,j) = \prod_{k=i}^{\min\{N_n,j-1\}}\chi(s_{i,k,j}<2^{k-j}\ln (D_n 2^{k-i} ))\nonumber
\end{eqnarray}
The bound should be read as follows: there can be between $1$ and $p$ problematic levels $j$. Say there are $r$ problematic levels $j$. The total set of indices $S$ will be partitioned in $r$ subsets of cardinality $m_1,\ldots, m_r$. If a level $j$ causes problems for $m$ indices $i$ there must be an index $i$ such that $i+m = j$.

We now have the following:
\begin{eqnarray*}
E[\sum_{j_l= i_l+m_l}^{\infty}\chi_4^C(i_l,j_l)]\leq \sum_{j_l= i_l+m_l}^{\infty}\sum_{k=i_l}^{\min\{N_n,j_l-1\}} e^{-\ln^2(D_n 2^{k-i_l})2^{2(k-j_l)}\rho_n^{2(k-j_l)q/2}/4}
\end{eqnarray*}
If $i_l+m_l \leq N_n$, this is bounded by
\begin{eqnarray*}
 \sum_{j_l= i_l+m_l}^{N_n}\sum_{k=i_l}^{j_l-1} +   \sum_{j_l= N_n+1}^{\infty}\sum_{k=i_l}^{N_n}
\end{eqnarray*}
Otherwise (if $i_l+m_l \geq N_n+1$) it is bounded by 
\begin{eqnarray*}
  \sum_{j_l= i_l+m_l}^{\infty}\sum_{k=i_l}^{N_n} e^{-\ln^2(D_n 2^{k-i_l})2^{2(k-j_l)}\rho_n^{2(k-j_l)q/2}/4} \leq Ce^{-\ln^2(D_n2^{m_l})\rho_n^{-q}}
\end{eqnarray*}
Similarly, the second sum above is dominated by the term in which $j_l=N_n+1$ and $k=N_n$ 
\begin{eqnarray}
 \sum_{j_l= N_n+1}^{\infty}\sum_{k=i_l}^{N_n} \leq Ce^{-\ln^2(D_n2^{N_n-i_l})\rho_n^{-q}} \leq Ce^{-\ln^2(D_n2^{m_l})\rho_n^{-q}}
\end{eqnarray}
The first sum is dominated by
\begin{eqnarray}
\sum_{j_l= i_l+m_l}^{N_n}\sum_{k=i_l}^{j_l-1}  \leq \sum_{j_l= i_l+m_l}^{N_n} e^{-\ln^2(D_n2^{j_l-i_l})\rho_n^{-q}}\\
\leq Ce^{-\ln^2(D_n2^{m_l})\rho_n^{-q}} \leq  Ce^{-\ln^2(D_n)\rho_n^{-q}  - 2\ln 2\ln(D_n) m_l\rho_n^{-q} -4 m_l^2\rho_n^{-q}}
\end{eqnarray}
Then we have 
\begin{eqnarray}
 \sum_{r=1}^p\sum_{m_1+\ldots+m_r = p} \left(\ldots\right)\ldots\left(\ldots\right)\leq \\
 \sum_{r=1}^p e^{-r\ln^2(D_n)\rho_n^{-q}}e^{-2p\ln 2\ln(D_n)  \rho_n^{-q}} \left(\sum_{m=1}^{p}Ce^{-4 m^2\rho_n^{-q}}\right)^r\\
\end{eqnarray}
Here $C$ is a universal constant and $\rho_n$ is small which implies the rightmost term is less than $2^r$. We thus get:
\begin{eqnarray}
E[\prod_{i\in S} \chi_4^C(i)] \leq C e^{-2p\ln 2\ln(D_n)  \rho_n^{-q} - \ln^2(D_n)\rho_n^{-q} }
\end{eqnarray}
So the probability that more than $\alpha_nN_n$ events of type 4 fail is less than:
\begin{equation}
C2^{N_n} e^{-2\alpha_nN_n\ln 2\ln(D_n)  \rho_n^{-q} - \ln^2(D_n)\rho_n^{-q} }
\end{equation}

We conclude this section with the following inequality:
\begin{eqnarray}
P(\sum_{i=1}^{N_n}Mod(G(A_i)) < \alpha_n cD_n^{-3} N_n)\leq   C2^{N_n}\left((1+ \ln (D_n))e^{-\ln^2D_n/2}\right)^{\alpha_n N_n/3}\\
+ 2^{N_n}\left(\frac{C_n}{D_n}\right)^{\alpha_n N_n/3} + 2^{N_n}\left(\frac{C}{\sqrt{D_n}}\right)^{\alpha_n N_n/3}\\
+  2^{N_n}C_nD_n^{-1} \rho_n^{-(2+3a_n/2)}\rho_n^{(1+2b_n)N_n}\\
+ C2^{N_n} e^{-2\alpha_nN_n\ln 2\ln(D_n)  \rho_n^{-q} - \ln^2(D_n)\rho_n^{-q} }
\end{eqnarray}
as long as all the constants satisfy the necessary inequality given above.

\subsection{Final estimates}

We now proceed with an analysis of the order of magnitude of all the constants involved such that all the results hold and such that  
\begin{equation}
P(\sum_{i=1}^{N_n}Mod(G(A_i)) < \alpha_n cD_n^{-3} N_n)\leq \tilde\rho_n\rho_n^{(1+b_n)N_n}
\end{equation}

\begin{itemize}
\item $a_n$ is given to us by the size of the variance in the GFF. It's size is $a_n\sim n^{-3\gamma}$. This forces us to take $\alpha_n \sim a_n/2$ and $b_n\sim n^{-3\gamma}$ (earlier we had $k$ in stead of $n$).
\item We also take $C_n \leq D_n^{1/2}$. Since $C_n \sim C_{neg}^{n^{3\gamma n^\gamma}}$ we must have $D_n \geq C_{neg}^{2n^{3\gamma n^\gamma}}$.
\item The inequality $4\rho_n^{a_n/2} < 1/2$. Since $b_n$ is less than $a_n$ it suffices to take  $4\rho_n^{a_n/2} < 1/2$ which leads us to $\rho_n \sim 2^{-10 n^{3\gamma}}$. 
\item We also need to have $C_nD_n^{-1} \rho_n^{-(2+3a_n/2)}<1/2 $. If we take $D_n \geq C_{neg}^{2n^{3\gamma n^\gamma}}$, this is automatically satisfied.
\item It suffices to have the following inequalities (these two terms dominate the others):
\begin{eqnarray}
 2^{N_n}\left(\frac{C}{\sqrt{D_n}}\right)^{\alpha_n N_n/3} \leq \tilde\rho_n\rho_n^{(1+b_n)N_n}\\
2^{N_n}C_nD_n^{-1} \rho_n^{-(2+3a_n/2)}\rho_n^{(1+2b_n)N_n}  \leq \tilde\rho_n\rho_n^{(1+b_n)N_n}
\end{eqnarray}
The second inequality can be rewritten as:
\begin{eqnarray}
2^{N_n}C_nD_n^{-1} \rho_n^{-(2+3a_n/2)} \rho_n^{b_nN_n}  \leq \tilde\rho_n
\end{eqnarray}
\item For consistency we also need $\tilde\rho_n\sim 2^{-n_n}\sim \rho_1^{N_1}\ldots\rho_{n-1}^{N_{n-1}}\sim 2^{-10(N_1+\ldots + (n-1)^{3\gamma}N_{n-1})}$.
\item In order to obtain the desired modulus of continuity of $f_+$ we need to have that $\alpha_n N_n D_n^{-3}$ is greater than a positive power of $N_n$.
\item If we set  $D_n \sim C_{neg}^{2n^{3\gamma n^\gamma}}$ the relations above become 
\begin{eqnarray}
2^{N_n}C_{neg}^{-n^{3\gamma n^\gamma}\alpha_n N_n/3}\leq 2^{-10(N_1+\ldots + (n-1)^{3\gamma}N_{n-1})}2^{-10n^{3\gamma}(1+b_n)N_n}\\
C_{neg}^{-n^{3\gamma n^\gamma}}2^{-N_n}\leq  2^{-10(N_1+\ldots + (n-1)^{3\gamma}N_{n-1})}
\end{eqnarray}
given the way we chose $\rho_n$.
\item Since $N_n>\alpha_n^{-1} D_n^3\sim n^{3\gamma}C_{neg}^{6n^{3\gamma n^\gamma}}$ it suffices to take:
\begin{eqnarray}
N_n \geq 10N_1+\ldots + 10(n-1)^{3\gamma}N_{n-1}
\end{eqnarray}
This condition is satisfied by any sequence $N_n$ which is a power (larger than 2) of $C_{neg}^{6n^{3\gamma n^\gamma}}$.
\item Take $N_n$ such that $\alpha_nN_n D_n^{-3} \sim N_n^{1-\epsilon}$ or $N_n\sim C_{neg}^{18 n^{3\gamma n^\gamma}/\epsilon}$ for $1/4>\epsilon > 0$ (independent of $n$). Then
\begin{eqnarray}
\sum_{i=1}^{N_n}Mod(G(A_i))\geq N_n^{1-\epsilon}.
\end{eqnarray}
This implies the modulus of continuity is given by the relationship of $\omega(t)\sim e^{-\sum_{i=1}^{n-1} N_i^{1-\epsilon}}$ and $t\sim2^{- \sum_{i=1}^n i^{3\gamma}N_i}$. 
\item For $t$ in the range $[2^{- \sum_{i=1}^n i^{3\gamma}N_i}, 2^{- \sum_{i=1}^{n-1} i^{3\gamma}N_i}]$ we get $\omega(t) \leq \omega (2^{- \sum_{i=1}^{n-1} i^{3\gamma}N_i})\leq e^{-\sum_{i=1}^{n-1} N_i^{1-\epsilon}}$.
\item We want this to be less than $e^{-(\log\frac{1}{t})^{1-2\epsilon}}$. This implies we need to have the following:
\begin{eqnarray}
e^{-\sum_{i=1}^{n-1} N_i^{1-\epsilon}} \leq e^{-(\log\frac{1}{2^{- \sum_{i=1}^n i^{3\gamma}N_i}})^{1-2\epsilon}}
\end{eqnarray}
which is equivalent to:
\begin{eqnarray}
(\log\frac{1}{2^{- \sum_{i=1}^n i^{3\gamma}N_i}})^{1-2\epsilon} \leq \sum_{i=1}^{n-1} N_i^{1-\epsilon} \mbox{ or }\\
(\sum_{i=1}^n i^{3\gamma}N_i)^{1-2\epsilon} \leq \sum_{i=1}^{n-1} N_i^{1-\epsilon} 
\end{eqnarray}
\item The last terms on each side dominate the computation, so we need to have $(n^{3\gamma}N_n)^{1-2\epsilon} < N_{n-1}^{1-\epsilon}$.
\item If we set $N_n = e^{f(n)}$ the last inequality becomes $(1-2\epsilon)(f(n) +3\gamma\ln n) <(1-\epsilon)f(n-1)$. Rewriting this we get $f(n) +3\gamma\ln n < 1+ \frac{\epsilon}{1-2\epsilon}f(n-1)$. 
\item If $\frac{\ln x}{f(x)}\rightarrow 0$ as $n\rightarrow \infty$ (this is the case in our situation), we get the inequality above for any function $f(x)$ for which $\frac{f'(x)}{f(x)}< \epsilon $ for $x$ large.
\item In our situation $f(x) \sim x^{3\gamma x^\gamma}/\epsilon = e^{3\gamma x^{\gamma}\ln x /\epsilon}$ and hence $\frac{f'(x)}{f(x)} = \frac{3\gamma}{\epsilon} (\gamma x^{\gamma-1}\ln x + x^{\gamma - 1})$.
\item If $\gamma < 1$ for all $x$ large enough $\frac{f'(x)}{f(x)}< \epsilon $, which implies that we get the modulus of continuity $e^{-(\log\frac{1}{t})^{1-2\epsilon}}$
\end{itemize}

Recall that we needed the following estimate in the proof of theorem \ref{SolutionRandomWeldingProblem} in section \ref{SectionConformalWelding}.

\begin{lemma}\label{IntegrabilityOfDistortion}
Let $D(z)$ be the distortion inside $\BBD$. Then
\begin{equation} 
E[\int_\BBD DdA]<\infty
\end{equation}
\end{lemma}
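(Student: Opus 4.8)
The plan is to bound $E[\int_{\BBD} D\,dA]$ by reducing the integral over the disk to a sum of integrals over Whitney boxes $C_I$ indexed by dyadic intervals $I$, and then to use the explicit pointwise bound on the distortion in terms of the random measure established in Section~\ref{SectionRandomConformalWelding}. Recall that in the upper half of the square with base $I$ the distortion of $F$ (hence of $\Psi$, hence of $\mu$) is bounded by $C_0 K_\nu(I)$, where $K_\nu(I) = \sum_{\mathbf{J}\in\mathcal{J}(I)}\delta_\nu(\mathbf{J})$ and each $\delta_\nu(\mathbf{J}) = \frac{\nu(J_1)}{\nu(J_2)}+\frac{\nu(J_2)}{\nu(J_1)}$ with $J_1,J_2\in\mathcal{D}_{n+5}$, $J_i\subset j(I)$. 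Since $|C_I|\sim |I|^2 = 2^{-2n}$ and there are $2^n$ dyadic intervals of size $2^{-n}$, the area factor $2^{-2n}\cdot 2^n = 2^{-n}$ will have to beat the growth of $E[K_\nu(I)]$.

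The key estimate is therefore a bound on $E[\delta_\nu(\mathbf{J})]$, or more precisely on $E\big[\tfrac{\nu(J_1)}{\nu(J_2)}\big]$ for sibling intervals $J_1, J_2$ of comparable size $2^{-(n+5)}$ lying in $j(I)$. For this I would use H\"older's inequality to split $E\big[\tfrac{\nu(J_1)}{\nu(J_2)}\big]\le E[\nu(J_1)^p]^{1/p}\,E[\nu(J_2)^{-q}]^{1/q}$ with $\tfrac1p+\tfrac1q=1$. The positive moment $E[\nu(J_1)^p]$ is controlled by the computation inside the proof of Theorem~\ref{LLogLMomentEstimate} (as used already in Lemma~\ref{DistribIneqL}): for $|J_1| = 2^{-m}$ one gets $E[\nu(J_1)^{p}]\le C_{p}|J_1|^{\zeta_{p}}$ where $\zeta_p = p - (p^2-p)\tfrac{t}{2}$ and $t$ is the largest variance used at that scale. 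The negative moment $E[\nu(J_2)^{-q}]$ is finite by part (c) of Theorem~\ref{MeasureProperties}; its size is $\lesssim |J_2|^{-q}(C_{neg}^{1/\delta_k}s_{0,k})^{q}$ where $k$ is the block index with $|J_2|\in[2^{-n_k},2^{-n_{k-1}})$. Choosing $p$ slightly bigger than $1$ (say $p = p_k = 1 + \tfrac12 k^{-\gamma}$, the same exponent used throughout), one has $\zeta_{p_k} - 1 > 0$, so the two size factors $|J_1|^{\zeta_{p_k}}$ and $|J_2|^{-q}$ combine — since $|J_1|\sim|J_2|\sim 2^{-n}$ — to give $E[\delta_\nu(\mathbf{J})]\le C_k\, 2^{-n(\zeta_{p_k}/p_k - 1)}\cdot(\text{constant}) $, i.e. a quantity that does not blow up in $n$ and in fact decays, at the cost of a block-dependent constant $C_k$.

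Assembling the pieces: $E[\int_\BBD D\,dA]\le \sum_n \sum_{|I|=2^{-n}} C_0\,|C_I|\,E[K_\nu(I)] \le C\sum_k \sum_{n=n_{k-1}}^{n_k-1} 2^{-n}\cdot 2^{10}\cdot C_k \cdot 2^{-n\,\eta_k}$ where $\eta_k = \zeta_{p_k}/p_k - 1 > 0$ and the $2^{10}$ accounts for the bounded number of pairs $\mathbf{J}\in\mathcal{J}(I)$. The inner sum over $n$ is a geometric series dominated by $C_k\,2^{-n_{k-1}}$, and since $n_{k-1}$ grows like $C_{neg}^{\text{(tower in }k)}$ while $C_k$ grows only like a fixed power of $C_{neg}^{k^{3\gamma k^\gamma}}$, the factor $2^{-n_{k-1}}$ crushes $C_k$ and the sum over $k$ converges. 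Hence $E[\int_\BBD D\,dA]<\infty$.

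The main obstacle I anticipate is making the moment bound on $E[\delta_\nu(\mathbf J)]$ genuinely uniform over all pairs $J_1,J_2\subset j(I)$ of size $2^{-(n+5)}$: the intervals $J_1$ and $J_2$ are \emph{not} independent (they share the vaguelets on scales above $n$), so one cannot simply multiply $E[\nu(J_1)^p]$ and $E[\nu(J_2)^{-q}]$ — the H\"older step is fine, but then one must feed in the correct conditional/decoupled versions of both moments, exactly as in the proof of Lemma~\ref{DistribIneqL} where the ``coarse'' field above the relevant scale is absorbed into a bounded multiplicative error using Lemma~\ref{VagueletLemma} and the Gaussian tail bound of Lemma~\ref{GaussianFieldLemma}. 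Getting the constants in that decoupling to be genuinely independent of $n$ within each block (and only block-dependent through $C_k$) is the delicate bookkeeping; everything else is summation of convergent series.
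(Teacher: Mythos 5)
Your overall framework matches the paper's (Whitney boxes, the bound $D\le C_0 K_\nu(I)$ in $C_I$, moment estimates, then a sum over blocks beaten by the tower growth of $n_k$), but the central moment calculation in your second paragraph is wrong, and the error is not cosmetic. The exponent you produce is
\begin{equation*}
\frac{\zeta_{p_k}}{p_k}-1 \;=\; -(p_k-1)\,\frac{t_{k+1}}{2}\;<\;0,
\end{equation*}
so $2^{-n(\zeta_{p_k}/p_k - 1)} = 2^{n(p_k-1)t_{k+1}/2}$ \emph{grows} exponentially in $n$ rather than decays; you slipped from the true inequality $\zeta_{p_k}-1>0$ to the false one $\zeta_{p_k}/p_k-1>0$. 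Moreover your bound $E[\nu(J_2)^{-q}]^{1/q}\lesssim |J_2|^{-1}C_{neg}^{1/\delta_k}s_{0,k}$ is too optimistic for a small interval $J_2$: the decoupled coarse field above scale $|J_2|$ is Gaussian with variance $\sim t\log_2(1/|J_2|)$ (end of the proof of Theorem \ref{MeasureProperties}), which contributes an extra factor of order $|J_2|^{-qt/2}$. So raw H\"older on $E[\nu(J_1)^p]$ and $E[\nu(J_2)^{-q}]$ makes the bound on $E[\delta_\nu(\mathbf J)]$ blow up severely as the Whitney boxes shrink, and the sum over $n\in[n_{k-1},n_k)$ then diverges no matter how you choose the exponents.

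The decoupling you relegate to \emph{bookkeeping} in your last paragraph is in fact the substance of the estimate. One must first write $\nu(J_i)=e^{Y_i}\tilde\nu(J_i)$ with $Y_i$ the field from scales above $|J|$ restricted near $j(I)$, observe that $Y_1-Y_2$ is a centered Gaussian of \emph{bounded} variance because $J_1,J_2$ lie within $O(2^5|J|)$ of each other (Lemmata \ref{DecayOfVaguelet}, \ref{VagueletLemma}, \ref{GaussianFieldLemma}), and only then take moments of the rescaled fine-scale pieces $\tilde\nu(J_i)/|J_i|$, whose laws are scale-free and whose positive/negative moments depend only on the block index $k$. That yields $E[\delta_\nu(\mathbf J)]\le C_k$ with \emph{no} power of $|J|$, which is the constant the paper's one-line proof invokes as ``$C_n$ from Lemma \ref{DistribIneqL}''. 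With that corrected bound the assembly is as you describe: $E[\int_\BBD D\,dA]\le C\sum_k C_k\sum_{m=n_{k-1}}^{n_k-1}2^{-m}\lesssim\sum_k C_k\,2^{-n_{k-1}}<\infty$, since $n_{k-1}$ is tower-exponential while $C_k\sim C_{neg}^{k^{3\gamma k^\gamma}}$. So the missing ingredient is exactly the one you flagged as a caveat; as written, the proof does not go through.
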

\begin{proof}
In each Whitney square the distortion $D$ is bounded by the constant $C_n$ (from the proof of lemma \ref{DistribIneqL}) for scales $[2^{-n_{n+1}}, 2^{-n_n}]$. Then we have 
\begin{eqnarray}
E[\int_\BBD DdA] \leq \int_\BBD E[D]dA \leq C\sum_n 2^{-n_n} C_n\leq C \sum_n 2^{-n_n} C_{neg}^{n^{3\gamma n^\gamma}}<\infty
\end{eqnarray}
\end{proof}

We conclude by the following summary: for the sequence $t_n = 2- \frac{1}{n^\gamma}, \gamma < 1$, and for any $\epsilon > 0$, we consider the sequence $n_n  = \sum_{i=1}^{n} i^{3\gamma}N_i $, where $N_i = C_{neg}^{18 i^{3\gamma i^\gamma}/\epsilon}$. If we construct the random measure using this sequence we get a conformal welding and a modulus of continuity of $e^{-(\log\frac{1}{t})^{1-2\epsilon}}$. 

For other sequences $\{t_n\}$  which converge to the critical value, we still get a conformal welding, but we can not prove the uniqueness by means of the removability theorems of Jones/Smirnov and Koskela/Nieminen. 

Finally, for $t_n\rightarrow 2$, the best modulus of continuity for the welding map is going to be worse than Holder.


\end{document}